\theoremstyle{definition}
\newtheorem{df}{Definition}[section]
\newtheorem{nota}[df]{Notation}
\theoremstyle{theorem}
\newtheorem{thm}[df]{Theorem}
\newtheorem{lem}[df]{Lemma}
\newtheorem{prop}[df]{Proposition}
\newtheorem{cor}[df]{Corollary}
\newtheorem{rem}[df]{Remark}
\newcommand{\la}{\langle}
\newcommand{\ra}{\rangle}
\newcommand{\tensor}{\otimes}
\newcommand{\Z}{\mathbb{Z}}
\newcommand{\R}{\mathbb{R}}
\newcommand{\C}{\mathbb{C}}
\newcommand{\Q}{\mathbb{Q}}
\newcommand{\W}{\mathcal{W}}
\newcommand{\al}{\alpha}
\newcommand{\be}{\beta}
\newcommand{\Com}{\mathrm{Com}}
\newcommand{\aut}{\mathrm{Aut}\,}
\begin{document}

\title[Weyl groups and VOA]{Weyl groups and vertex operator algebras generated by Ising vectors satisfying $(2B,3C)$ condition}

 \author{Hsian-Yang Chen}
  \address[H.Y.  Chen]{ Institute of Mathematics, Academia Sinica, Taipei  10617, Taiwan}
\email{hychen@math.sinica.edu.tw}
 \author{Ching Hung Lam}
 \address[C.H. Lam]{ Institute of Mathematics, Academia Sinica, Taipei  10617, Taiwan
 and National Center for Theoretical Sciences, Taiwan}
\email{chlam@math.sinica.edu.tw}
\thanks{Partially supported by NSC grant
  100-2628-M-001005-MY4 }
\date{\today}
\maketitle

\begin{abstract}
    In this article, we construct explicitly certain moonshine type vertex operator algebras  generated by a set of Ising vectors $I$ such that
    \begin{itemize}
        \item[(1)] for any $e\neq f\in I$, the subVOA $\mathrm{VOA}(e,f)$ generated by $e$ and $f$ is isomorphic to either $U_{2B}$ or $U_{3C}$; and
        \item[(2)] the subgroup generated by the corresponding Miyamoto involutions $\{\tau_e|\,e\in I\}$ is isomorphic to the Weyl group of a root system of type $A_n$, $D_n$, $E_6$, $E_7$ or $E_8$.
    \end{itemize}
The structures of the corresponding  vertex operator algebras and their Griess algebras are also studied.  In particular, the central charge of these vertex operator algebras are determined.
\end{abstract}

\section{Introduction}

The study of Ising vectors or simple conformal vectors of central charge $1/2$ was initiated by Miyamoto \cite{miy}. He noticed
that one can define certain involutive automorphisms of a VOA by using Ising vectors and the corresponding fusion rules. These
automorphisms are often called Miyamoto automorphisms or Miyamoto involutions. When $V$ is the famous Moonshine VOA
$V^{\natural}$, Miyamoto \cite{M} also showed that there is a $1$-$1$ correspondence between the $2A$ involutions of the
Monster group and Ising vectors in $V^{\natural}$ (see also \cite{ho}). This correspondence turns out to be very useful for
studying some mysterious phenomena of the Monster group and many problems about $2A$-involutions of  the Monster group
can be translated into questions about Ising vectors. In \cite{LYY,LYY2}, McKay's observation on the  affine $E_{8}$-diagram has
been studied using Miyamoto involutions. In particular, certain VOAs generated by $2$ Ising vectors were constructed explicitly.
There are nine  cases  and they are denoted by $U_{1A},U_{2A},U_{2B},U_{3A},U_{3C},U_{4A},U_{4B},U_{5A}$, and $U_{6A}$
because of their connection to the 6-transposition property of the Monster group (cf. \cite[Introduction]{LYY}), where
$1A,2A,\dots,6A$ denote certain conjugacy classes of the Monster (see~\cite{ATLAS}).

In \cite{Sa}, Griess algebras generated by 2 Ising vectors contained in a moonshine type VOA $V$ over $\R$ with a positive definite
invariant form are classified. There are also nine possible cases, and they correspond exactly to the Griess algebra
$\mathcal{G}U_{nX}$ of the nine VOAs $U_{nX}$, for $nX\in\{1A,2A,2B,3A,3C,4A,4B,5A,6A\}$. In addition, he proved that
Miyamoto involutions associated to Ising vectors satisfy a $6$-transposition property, namely,
\[
|\tau_e\tau_f|\leq 6 \qquad \text{ for any Ising vectors } e,f \in V.
\]

Motivated by the work of Sakuma \cite{Sa}, Ivanov
axiomatized the properties of Ising vectors and introduced the notion of Majorana representations for finite groups in his book~\cite{Iva}. In particular,  the famous $196884$-dimensional Monster Griess algebra constructed by Griess~\cite{G} is a Majorana representation of the Monster simple group. Ivanov and his research group also initiated a program on classifying the Majorana representations for various finite groups \cite{Iva2,Iva3,IS,IPSS}.  It turns out that most known examples of Majorana representations are constructed as certain subalgebras of the Monster Griess algebra.

One fundamental question of the theory is to determine what kinds of (6-transposition) groups can be generated by Miyamoto involutions and what are the structures of the corresponding Griess algebras and vertex operator algebras. Up to now, almost all known examples are related to the Monster simple group $\mathbb{M}$ and the Moonshine VOA $V^\natural$.  Therefore, it is very interesting to see if there are non-trivial examples which are not contained in the Moonshine VOA. In this article, we construct a series of examples associated to the Weyl groups. The main result is an explicit construction of certain moonshine type vertex operator algebras  generated by a set of Ising vectors $I$ such that
    \begin{itemize}
        \item[(1)] for any $e\neq f\in I$, the subVOA $\mathrm{VOA}(e,f)$ generated by $e$ and $f$ is isomorphic to either $U_{2B}$ or $U_{3C}$; and
        \item[(2)] the subgroup generated by the corresponding Miyamoto involutions $\{\tau_e|\,e\in I\}$ is isomorphic to the Weyl group of a root system of type $A_n$, $D_n$, $E_6$, $E_7$ or $E_8$.
    \end{itemize}
The structures of the corresponding  vertex operator algebras and their Griess algebras are also studied.   In particular, the central
charge of these vertex operator algebras are determined. Our construction  in some sense  is an $E_8$-analogue of \cite{LSa} and
\cite{LSY}, in which certain VOAs generated  by Ising vectors associated to norm $4$ vectors of the lattice $A_1\otimes R\cong
\sqrt{2}R$ for some root lattice $R$ are considered.

The organization of this article is as follows. In Section 2, we recall the basic notion of lattice VOA and the definitions of Ising vectors and Miyamoto involutions.  In particular, we recall some basic properties of the so-called $2B$ and $3C$-algebras from \cite{LYY}. We also review the construction of certain Ising vectors in lattice VOA from \cite{dlmn}. In Section 3, we give an explicit construction of  a certain VOA generated by Ising vectors such that the subgroup generated by the
corresponding Miyamoto involutions is isomorphic to the Weyl group of a root system of type $A$, $D$ or $E$. The main idea is to consider a certain integral lattice which has many sublattices isometric to $\sqrt{2}E_8$. In Section 4, we study the Griess algebra generated by the Ising vectors and determine the central charge of the corresponding VOA. In Section 5, we give a more detailed study for the type A case. In addition,  we also give a construction of a VOA such that the subgroup generated by the corresponding  Miyamoto involutions has the shape $3^{n-2}:S_n$ if $n=0\mod 3$ or $3^{n-1}:S_n$ if $n\neq 0 \mod 3$.

\section{Preliminary}
    Our notation for the lattice vertex operator algebra
    \begin{equation}
        V_L=M(1)\otimes \mathbb C\{L\}
    \end{equation}
    associated with a positive definite even lattice $L$ is standard \cite{FLM}.
    In particular, $\mathfrak h=\mathbb C\otimes_{\mathbb Z} L$ is an abelian Lie algebra and we extend the bilinear form to $\mathfrak h$ by $\mathbb C$-linearity.
    The set  $\hat{\mathfrak h}=\mathfrak h\otimes\mathbb C[t,t^{-1}]\oplus\mathbb Ck$ is the corresponding affine algebra and $\mathbb Ck$ is the $1$-dimensional center of $\hat{\mathfrak h}$.
    The subspace $M(1)=\mathbb C[\alpha_i(n)|\ 1\leq i\leq d, n<0]$ for a basis $\{\alpha_1,\cdots,\alpha_d\}$ of $\mathfrak h$, where $\alpha(n)=\alpha\otimes t^n$, is the unique irreducible $\hat{\mathfrak h}$-module such that $\alpha(n)\cdot 1=0$ for all $\alpha\in\mathfrak h$, $n$ nonnegative, and $k=1$.
    Also, $\mathbb C\{L\}=\mathrm{Span}\{e^\beta|\ \beta\in L\}$ is the twisted group algebra of the additive group $L$ such that $e^\beta e^\alpha=(-1)^{\langle \alpha,\beta\rangle}e^\alpha e^\beta$ for any $\alpha$, $\beta\in L$.
    The vacuum vector $\mathbbm 1$ of $V_L$ is $1\otimes e^0$ and the Virasoro element $\omega_L$ is $\frac{1}{2}\sum^d_{i=1} \beta_i(-1)^2\cdot \mathbbm 1$ where $\{\beta_1,\cdots,\beta_d\}$ is an orthonormal basis of $\mathfrak h$.
    For the explicit definition of the corresponding vertex operators, we shall refer to \cite{FLM} for details.

\begin{nota}
Let $L$ be an integral lattice. Its dual lattice $L^*$ is defined to be
\[
L^* =\{\be \in \Q\otimes_\Z L\mid \la \al, \be \ra \in \Z \text{ for all }  \al\in L\}.
\]
The discriminant group of $L$ is the quotient group $\mathcal{D}(L)=L^*/L$. We also denote the isometry group of $L$ by $O(L)$.
\end{nota}

\begin{df}\label{tm}
Let $X$ be a subset of a Euclidean space. Define $t_X$ to be the orthogonal transformation which is $-1$ on $X$ and is $1$ on
$X^\perp$.
\end{df}

\begin{df}\label{rssd}
A sublattice $M$ of an integral lattice $L$ is {\it RSSD (relatively semiselfdual)} if and only if $2L\le M + ann_L(M)$. This implies
that $t_M$ maps $L$ to $L$ and hence $t_M\in O(L)$.

The property that $2 M^* \le M$ is called {\it SSD (semiselfdual)}. It implies the RSSD property.
\end{df}

\begin{rem}
Let $\sqrt{2}E_8$ be the $\sqrt{2}$ times of the root lattice $E_8$. Then $\mathcal{D}(\sqrt{2}E_8)\cong 2^8$. Therefore, the lattice $\sqrt{2}E_8$ is an SSD and thus, for any sublattice $E\cong \sqrt{2}E_8\subset L$, the map $t_E$ defines an
involution on $L$. We call such an involution $t_E$ an SSD involution associated to $E$.
\end{rem}

\begin{df}
Let $A$ and $B$ be integral lattices with the inner products $\langle\ ,\ \rangle_A$ and $\langle\ ,\ \rangle_B$, respectively. The
\emph{tensor product} of the lattice $A$ and $B$ is defined to be the integral lattice which is isomorphic to $A\otimes_\mathbb Z
B$ as a $\mathbb Z$-module and has the inner product given by
\begin{equation}
\langle \alpha\otimes\beta,\alpha'\otimes\beta'\rangle=\langle\alpha,\alpha'\rangle_A\cdot\langle\beta,\beta'\rangle_B,
\end{equation}
for any $\alpha$, $\alpha'\in A$, and $\beta,\beta'\in B$. We simply denote the tensor product of the lattices $A$ and $B$ by
$A\otimes B$.
\end{df}

The next lemma can be found in \cite[Lemma 3.2 and 6.4]{GL} (cf. \cite{GL3c}).
\begin{lem}\label{gl3c}
Let $L$  be a positive definite  even lattice with no roots, i.e., $L(2)=\{\al\in L\mid \la \al, \al\ra =2\}=\emptyset$. Let $M$ and
$N$ be $\sqrt{2}E_8$-sublattices of $L$. Suppose $M\cap N=0$ and $t_Mt_N$ has order 3. Then $M+N\cong
A_2\otimes E_8$.
\end{lem}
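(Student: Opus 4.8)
The plan is to work entirely inside the rational span $U = (M+N)\otimes\Q$, which is an $8$-dimensional quadratic space (since $M\cap N = 0$ forces $M$ and $N$ to span complementary... no — rather, since $t_Mt_N$ has order $3$, the group $\langle t_M,t_N\rangle$ is a dihedral group of order $6$ acting on $U$, and its fixed space and the $M$-, $N$-directions together force $M\otimes\Q = N\otimes\Q = U$ to have dimension $8$). First I would record the geometry of the two involutions $t_M, t_N$ on $U$: each is $-1$ on an $8$-dimensional subspace ($M\otimes\Q$, resp. $N\otimes\Q$) by Definition \ref{tm}, so in fact $t_M = -1 = t_N$ on $U$ would give order $1$; hence the correct reading is that $M$ and $N$ span a $16$-dimensional space and $t_M, t_N$ are the $-1$-eigenspace reflections there, with $\rho := t_Mt_N$ of order $3$. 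The element $\rho$ then has eigenvalues among the cube roots of unity, and because $t_M\rho t_M = \rho^{-1}$, the space $U$ decomposes under $\langle t_M,\rho\rangle$ into copies of the $2$-dimensional reflection representation of $S_3$ plus possibly trivial summands; comparing dimensions ($\dim M\otimes\Q = \dim N\otimes\Q = 8$, $M\cap N=0$) pins this down so that $U$ is a sum of $8$ copies of the reflection representation, i.e. $U \cong (A_2\otimes\Q)\otimes(\text{$8$-dim space})$ as a $\langle t_M,\rho\rangle$-module, with $M\otimes\Q$ and $N\otimes\Q$ the two natural "root" sublattices.

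Next I would pin down the integral structure. Set $A_2$ to be realized with its standard simple roots $\alpha_1,\alpha_2$, so that $t_{\Q\alpha_1}, t_{\Q\alpha_1}t_{\Q\alpha_2}$ generate $S_3 = W(A_2)$ and $\alpha_1$-eigendata match $t_M, t_N$. The idea is to produce an isometry $M+N \to A_2\otimes E_8$. I would argue that $M$, being a $\sqrt2 E_8$-lattice sitting as the $(-1)$-eigenlattice of $t_M$ on $L$, together with the order-$3$ symmetry $\rho$, generates a lattice on which $\rho$ acts; the $\rho$-invariants and the way $\rho$ moves $M$ reconstruct the $A_2$-tensor structure. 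Concretely, $M \cong \sqrt2 E_8$ gives the "$E_8$ factor" (up to scaling), and the three lattices $M$, $\rho M$, $\rho^2 M$ inside $L$ fit together exactly as the three conjugates of a root direction inside $A_2\otimes E_8$. One verifies that $M + \rho M = M + N$ (using $N = t_N N$, $t_N = t_M\rho$, and $t_M M = M$ to show $\rho M \subseteq M+N$ and a dimension/discriminant count to get equality), and that the Gram data of a basis $\{\,b, \rho b : b \text{ a basis of } M\,\}$ is precisely that of $\alpha\otimes\epsilon$-type vectors in $A_2\otimes E_8$, using that $\langle M, M\rangle \subseteq 2\Z$, that $\langle x, \rho x\rangle$ is determined by $\langle x,x\rangle$ via $\langle x,x\rangle + \langle \rho x,\rho x\rangle + \dots$ being forced by $1+\rho+\rho^2$ acting as a scalar on the reflection representation, and the hypothesis $L(2)=\emptyset$ to exclude the spurious possibility that the tensor lattice is rescaled down (which would create roots). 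This hypothesis $L(2) = \emptyset$ is what forces the minimal norm in $M+N$ to be $4$, matching $A_2\otimes E_8$ (minimal norm $2\cdot 2 = 4$) rather than, say, $E_8\otimes A_2$ rescaled.

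The main obstacle I expect is the last point: showing that the lattice one reconstructs is exactly $A_2\otimes E_8$ and not a proper finite-index overlattice or sublattice with the same rational span and the same pair of $\sqrt2E_8$-sublattices. I would handle this by computing the discriminant group: $\mathcal D(\sqrt2 E_8) \cong 2^8$, $\mathcal D(A_2) \cong 3$, so $\mathcal D(A_2\otimes E_8)$ has order $3^8$ (since $E_8$ is unimodular, $A_2\otimes E_8$ has discriminant $\det(A_2)^8 = 3^8$), and I would show that the lattice generated by $M$ and $\rho M$ realizes exactly this discriminant, invoking Lemma \ref{gl3c}'s own hypotheses (no roots, the precise order of $t_Mt_N$) to rule out the other candidates — any overlattice would either fail to be even, or would acquire vectors of norm $2$, or would make $t_Mt_N$ have order $\neq 3$. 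Identifying the reconstructed lattice with the standard $A_2\otimes E_8$ then follows from uniqueness of $E_8$ and the rigidity of the $S_3$-action, i.e. from the fact that an even lattice with the right rank, minimum, discriminant form, and order-$3$ automorphism with the prescribed fixed-point structure is isometric to $A_2\otimes E_8$.
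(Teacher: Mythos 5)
First, a remark on the benchmark: the paper does not prove this lemma at all; it quotes it from \cite{GL} (Lemma 3.2 and 6.4), so your proposal has to be judged on its own merits. Its skeleton is sound and is essentially the right one: $\Q\otimes(M+N)$ is $16$-dimensional, the sign-isotypic part of the $\la t_M,t_N\ra\cong S_3$-action dies because it would lie in $(\Q\otimes M)\cap(\Q\otimes N)=0$, so the space is a sum of eight reflection representations, $1+\rho+\rho^2=0$ for $\rho=t_Mt_N$, and one rebuilds the tensor structure from $M$ and $\rho M$. The genuine gap is the pivotal equality $M+\rho M=M+N$. A ``dimension/discriminant count'' cannot deliver the reverse inclusion $N\subseteq M+\rho M$, because the discriminant of $M+N$ is exactly the unknown quantity; and your fallback discussion aims the rootlessness hypothesis at the wrong lattice (rank-$16$ overlattices of $A_2\otimes E_8$, where the claim ``any overlattice acquires norm-$2$ vectors'' is true but is itself a nontrivial computation you do not supply, while the clause ``or would make $t_Mt_N$ have order $\neq 3$'' is vacuous since $t_M,t_N$ are determined by $M,N$ independently of any overlattice). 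What is actually needed is the identity $N=\rho M$. Both are $\sqrt{2}E_8$-lattices in $L$ spanning the same $8$-dimensional space (the $(-1)$-eigenspace of $t_N$ is $\rho(\Q\otimes M)$); since $\la N,\rho M\ra\subseteq\Z$ and $2N^*=N$, the lattice $N+\rho M$ lies between $\sqrt{2}E_8$ and its dual $\tfrac{1}{\sqrt2}E_8$; rescaling by $\sqrt2$, a lattice strictly between $2E_8$ and $E_8$ contains a nonzero vector of norm at most $4$ (covering radius of $2E_8$ is $2$), which scales back to a vector of norm at most $2$ in $L$, contradicting evenness and $L(2)=\emptyset$. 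Hence $N+\rho M=N$, i.e.\ $N=\rho M$ and $M+N=M+\rho M$. This rank-$8$ application of the no-roots hypothesis is the missing lemma; nothing in your sketch substitutes for it.

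Once $N=\rho M$ is in hand, no uniqueness-of-lattices argument is needed: fix an isometry $\psi:\sqrt{2}E_8\to M$ and map $\al_1\otimes x\mapsto\psi(x)$, $\al_2\otimes x\mapsto\rho\psi(x)$. Here there is a second, fixable, omission: $1+\rho+\rho^2=0$ only gives $\la x,\rho y\ra+\la y,\rho x\ra=-\la x,y\ra$; to get the off-diagonal Gram entries $\la x,\rho y\ra=-\tfrac12\la x,y\ra$ for all $x,y\in\Q\otimes M$ you must also show $\la x,\rho y\ra=\la y,\rho x\ra$, which follows by conjugating with $t_M$ (both vectors lie in its $(-1)$-eigenspace and $t_M\rho t_M=\rho^{-1}$). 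With that, the map above is an isometry onto $M+\rho M=M+N\cong A_2\otimes E_8$, and your closing appeal to ``an even lattice with the right rank, minimum, discriminant form and order-$3$ automorphism is isometric to $A_2\otimes E_8$'' becomes unnecessary (it would, in any case, itself require proof). In short: right architecture, but the decisive identification $N=\rho M$ is asserted rather than proved, and the tools you name for it do not close the gap as stated.
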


\subsection{Ising vectors and Miyamoto involutions}

\begin{df} \label{vvc}
Let $V$ be a VOA. An element $v\in V_2$ is called a \emph{simple Virasoro vector} of central charge $c$
 if the subVOA $\mathrm{Vir}(v)$ generated by $ v$ is isomorphic to the simple Virasoro VOA $L(c,0)$ of central charge $c$.
\end{df}

\begin{df} \label{Ising vector}
A simple Virasoro vector of central charge $1/2$ is called an \emph{Ising vector}.
\end{df}

\begin{rem} \label{3irr module} It is well known the VOA $L(\frac{1}{2},0)$ is rational
and it has exactly 3 irreducible modules $L(\frac{1}{2},0)$, $L(\frac{1}{2},\frac{1}{2})$,
and $L(\frac{1}{2},\frac{1}{16})$ (cf. \cite{dmz,miy}).
\end{rem}

\begin{rem}\label{Veh}
Let $V$ be a VOA and let $e\in V$ be an Ising vector.
Then we have the decomposition
\[
V=V_{e}(0)\oplus V_{e}(\frac{1}{2})\oplus V_{e}(\frac{1}{16}),
\]
where $V_{e}(h)$ denotes the sum of all irreducible $\mathrm{Vir}(e)$-submodules of
$V$ isomorphic to $L(\frac{1}{2},h)$ for $h\in\{0,\frac{1}{2},\frac{1}{16}\}$.
\end{rem}

\begin{thm}[\cite{miy}]\label{taue}
The  linear map $\tau_{e}:\, V\rightarrow V$ defined by
\begin{equation}
\tau_{e}:=\begin{cases}
1 & \text{on \ensuremath{V_{e}(0)\oplus V_{e}(\frac{1}{2})}},\\
-1 & \text{on \ensuremath{V_{e}(\frac{1}{16})}},
\end{cases}\label{eq:taue}
\end{equation}
is an automorphism of $V$.

On the fixed point subspace $
V^{\tau_{e}}=V_{e}(0)\oplus V_{e}(1/2)$,  the  linear map $\sigma_{e}:\, V^{\tau_{e}}\rightarrow V^{\tau_{e}}$ defined
by
\begin{equation}
\sigma_{e}:=\begin{cases}
1 & \text{on \ensuremath{V_{e}(0)}},\\
-1 & \text{on \ensuremath{V_{e}(\frac{1}{2})}},
\end{cases}\label{eq:-1}
\end{equation}
is an automorphism of $V^{\tau_{e}}$.
\end{thm}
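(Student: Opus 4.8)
The plan is to recognize $\tau_e$ and $\sigma_e$ as the automorphisms attached to $\Z/2\Z$-gradings of $V$ and of $V^{\tau_e}$, respectively, the gradings being forced by the fusion rules of $\mathrm{Vir}(e)\cong L(\tfrac12,0)$. First I would record the two structural inputs. Since $L(\tfrac12,0)$ is rational (Remark~\ref{3irr module}), $V$ is a completely reducible $\mathrm{Vir}(e)$-module, and collecting isotypic components yields exactly the decomposition $V=V_{e}(0)\oplus V_{e}(\tfrac12)\oplus V_{e}(\tfrac1{16})$ of Remark~\ref{Veh}; moreover $\mathbbm 1\in V_{e}(0)$ trivially, and $\omega\in V_{e}(0)$ as well, since $\omega-e$ lies in the commutant $\Com_V(\mathrm{Vir}(e))$, which is contained in $V_{e}(0)$ because $L^{e}(0)$ vanishes on it.

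Next I would use the Ising fusion rules: $L(\tfrac12,0)$ is the unit, $L(\tfrac12,\tfrac12)\boxtimes L(\tfrac12,\tfrac12)=L(\tfrac12,0)$, $L(\tfrac12,\tfrac12)\boxtimes L(\tfrac12,\tfrac1{16})=L(\tfrac12,\tfrac1{16})$, and $L(\tfrac12,\tfrac1{16})\boxtimes L(\tfrac12,\tfrac1{16})=L(\tfrac12,0)\oplus L(\tfrac12,\tfrac12)$. Define $\varepsilon\colon\{0,\tfrac12,\tfrac1{16}\}\to\Z/2\Z$ by $\varepsilon(0)=\varepsilon(\tfrac12)=0$ and $\varepsilon(\tfrac1{16})=1$; the fusion rules say precisely that the fusion coefficient $N_{h_1h_2}^{\,h_3}$ is nonzero only when $\varepsilon(h_3)=\varepsilon(h_1)+\varepsilon(h_2)$. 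For $u\in V_{e}(h_1)$ and $v\in V_{e}(h_2)$, the component of $Y(u,z)v$ landing in the block $V_{e}(h_3)$ is governed by an intertwining operator of $\mathrm{Vir}(e)$ of type $\binom{V_{e}(h_3)}{V_{e}(h_1)\ V_{e}(h_2)}$, and such a nonzero intertwining operator exists only when $N_{h_1h_2}^{\,h_3}\neq0$; hence $u_nv\in\bigoplus_{h_3:\,N_{h_1h_2}^{h_3}\neq0}V_{e}(h_3)\subseteq\bigoplus_{\varepsilon(h)=\varepsilon(h_1)+\varepsilon(h_2)}V_{e}(h)$ for all $n$. Therefore
\[
\tau_e(u_n v)=(-1)^{\varepsilon(h_1)+\varepsilon(h_2)}u_nv=\big((-1)^{\varepsilon(h_1)}u\big)_n\big((-1)^{\varepsilon(h_2)}v\big)=\tau_e(u)_n\,\tau_e(v)
\]
for all $n\in\Z$; by bilinearity this extends to all $u,v\in V$, so $\tau_e$ is a vertex-algebra endomorphism. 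It fixes $\mathbbm 1$ and $\omega$ by the first paragraph, and it is an involution, hence bijective; thus $\tau_e\in\aut V$.

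For $\sigma_e$ I would argue in the same way inside $V^{\tau_e}=V_{e}(0)\oplus V_{e}(\tfrac12)$. This subspace is a subVOA: it contains $\mathbbm 1$ and $\omega$, it is closed under all products by the fusion rules, and in fact it is the fixed-point subalgebra of the automorphism $\tau_e$. Restricting $\varepsilon$ to $\{0,\tfrac12\}$ --- so now $\varepsilon(0)=0$, $\varepsilon(\tfrac12)=1$ --- the relation $L(\tfrac12,\tfrac12)\boxtimes L(\tfrac12,\tfrac12)=L(\tfrac12,0)$ shows $\varepsilon$ is again additive along nonzero fusion coefficients within $V^{\tau_e}$, and the identical computation shows $\sigma_e$ is a vertex-algebra endomorphism of $V^{\tau_e}$ fixing $\mathbbm 1$ and $\omega$; being an involution, it is an automorphism.

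The main obstacle is the one step that is more than bookkeeping: justifying that $u_nv$ lands in the isotypic blocks predicted by the fusion rules. This rests on (i) the rationality of $L(\tfrac12,0)$ and the classification of its three simple modules (Remark~\ref{3irr module}), (ii) the computation of the Ising fusion algebra, and (iii) the general principle that, for a rational subVOA, the components of $Y(u,z)$ between two isotypic blocks define intertwining operators and hence vanish whenever the relevant fusion coefficient is $0$. Once these are in hand, everything else is a routine computation with the group $\Z/2\Z$.
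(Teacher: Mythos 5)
Your fusion-rule/$\Z_2$-grading argument is precisely the standard proof of this result, which the paper does not prove at all but quotes directly from Miyamoto \cite{miy}; so the proposal is correct and matches the approach of the cited source. The only step worth flagging is the claim that $\omega-e\in\Com_V(\mathrm{Vir}(e))$ (hence $\omega\in V_e(0)$), which in general uses $L(1)e=0$; this is automatic for the moonshine-type VOAs ($V_1=0$) to which the theorem is applied in this paper.
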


\subsubsection{$2B$ and $3C$ algebras}

Next  we recall the properties of the $2B$-algebra $U_{2B}$ and the $3C$-algebra $U_{3C}$ from \cite{LYY2} (see also \cite{Sa}).

\begin{lem}[cf. \cite{LYY2}]
Let $e$ and $f$ be Ising vectors such that $\la e,f\ra=0$. Then the subVOA $VOA(e,f)$ generated by $e$ and $f$ is isomorphic to
$L(\frac{1}2,0)\otimes L(\frac{1}2,0)$. The VOA $VOA(e,f)\cong L(\frac{1}2,0)\otimes L(\frac{1}2,0)$ is called the $2B$-algebra
and is denoted by $U_{2B}$.
\end{lem}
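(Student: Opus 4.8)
The plan is to reduce the statement to the claim that the Virasoro subVOAs $\mathrm{Vir}(e)$ and $\mathrm{Vir}(f)$ commute with each other inside $V$, i.e.\ that $e_{(n)}f=0$ for all integers $n\ge 0$, equivalently that $f$ lies in the commutant $\Com(\mathrm{Vir}(e),V)$. Granting this, skew-symmetry gives $f_{(n)}e=0$ for $n\ge 0$ as well, so the vertex operators $Y(e,z)$ and $Y(f,w)$ are mutually commutative, and the universal property of the tensor product of Virasoro vertex operator algebras yields a VOA homomorphism $L(\tfrac12,0)\otimes L(\tfrac12,0)\to V$ sending the two canonical Virasoro elements to $e$ and $f$; it factors through $L(\tfrac12,0)\otimes L(\tfrac12,0)$ rather than the Virasoro VOA of central charge $(\tfrac12,\tfrac12)$ because $\mathrm{Vir}(e)\cong\mathrm{Vir}(f)\cong L(\tfrac12,0)$, so the two defining null vectors map to $0$. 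Its image is exactly $VOA(e,f)$. Since $L(\tfrac12,0)$ is simple and rational, $L(\tfrac12,0)\otimes L(\tfrac12,0)$ is a simple VOA, so the homomorphism, being nonzero, is injective; hence $VOA(e,f)\cong L(\tfrac12,0)\otimes L(\tfrac12,0)=U_{2B}$.

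So the whole matter reduces to commutativity, and the first step cuts this down to a single Griess-algebra identity. Working in a VOA of moonshine type ($V=\bigoplus_{n\ge 0}V_n$ with $V_0=\C\mathbbm1$ and $V_1=0$), Remark~\ref{Veh} gives $V=V_e(0)\oplus V_e(\tfrac12)\oplus V_e(\tfrac1{16})$; since $L(\tfrac12,\tfrac12)$ and $L(\tfrac12,\tfrac1{16})$ have conformal weights in $\tfrac12+\Z_{\ge 0}$ and $\tfrac1{16}+\Z_{\ge 0}$ while $V_1=0$, a weight count shows that the weight-$2$ subspaces of $V_e(\tfrac12)$ and $V_e(\tfrac1{16})$ consist of $\mathrm{Vir}(e)$-lowest weight vectors, and that the weight-$2$ subspace of $V_e(0)$ equals $\C e\oplus\Com(\mathrm{Vir}(e),V)_2$. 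On these four summands the operator $e_{(1)}$ (the weight-preserving mode of $Y(e,z)$) acts by the scalars $\tfrac1{16},\tfrac12,2$ and $0$ respectively; hence its kernel on $V_2$ is precisely $\Com(\mathrm{Vir}(e),V)_2$, and any vector there, being a sum of vacuum-type highest weight vectors for $\mathrm{Vir}(e)$, is killed by every $e_{(n)}$ with $n\ge 0$. As $f\in V_2$, this gives: $\mathrm{Vir}(e)$ and $\mathrm{Vir}(f)$ commute $\iff e_{(1)}f=0$.

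It remains to prove $e_{(1)}f=0$ from the hypothesis $\la e,f\ra=0$, and this is where both the positive-definiteness of the invariant form and the fact that $f$ is itself an Ising vector (not merely a weight-$2$ vector) must be used; this step is essentially Sakuma's Griess-algebra analysis \cite{Sa} specialised to the present case. Writing $f=\lambda e+f^{0}+f^{1/2}+f^{1/16}$ for the $\mathrm{Vir}(e)$-isotypic decomposition, the relation $e_{(3)}f=\la e,f\ra\mathbbm1$ together with $e_{(3)}e=\tfrac14\mathbbm1$ gives $\lambda=4\la e,f\ra$, so $\la e,f\ra=0$ already removes the $\C e$-component; the content is to show that $f^{1/2}$ and $f^{1/16}$ vanish as well. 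Setting $g=e_{(1)}f=\tfrac12 f^{1/2}+\tfrac1{16}f^{1/16}$, one uses invariance of the form together with the Ising relations for $f$, namely $f_{(1)}f=2f$ and $f_{(3)}f=\tfrac14\mathbbm1$, and the fusion rules for $L(\tfrac12,0)$-modules, to express the squared norms $\la f^{1/2},f^{1/2}\ra$ and $\la f^{1/16},f^{1/16}\ra$ purely in terms of $\la e,f\ra$, in each case as a nonnegative multiple of it. Positive-definiteness of the form and $\la e,f\ra=0$ then force $f^{1/2}=f^{1/16}=0$, hence $g=e_{(1)}f=0$, and by the previous paragraph $f\in\Com(\mathrm{Vir}(e),V)$. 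I expect this last norm computation to be the only genuine obstacle; everything else is formal.
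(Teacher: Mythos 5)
The paper itself gives no proof of this lemma---it is quoted from \cite{LYY2} (see also \cite{Sa}), where it is established for moonshine type VOAs over $\R$ with a positive definite invariant form, exactly the hypotheses you correctly flag as implicit here---so your argument is a self-contained alternative rather than a parallel of anything in the paper. Your reduction is sound: since $V_0=\C\mathbbm{1}$ and $V_1=0$, the weight-two pieces of $V_e(\tfrac12)$ and $V_e(\tfrac1{16})$ consist of lowest weight vectors, $V_e(0)_2=\C e\oplus \ker e_{(1)}|_{V_2}$, vectors in that kernel are annihilated by all $e_{(n)}$ with $n\ge 0$, and mutual commutativity together with simplicity of $L(\tfrac12,0)\otimes L(\tfrac12,0)$ yields the isomorphism. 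The one step you leave as an expectation is in fact a one-line computation and needs neither $f_{(3)}f=\tfrac14\mathbbm{1}$ nor any fusion rules: writing $f=\lambda e+f^{0}+f^{1/2}+f^{1/16}$ with $\lambda=4\la e,f\ra$, commutativity of the Griess product and self-adjointness of $e_{(1)}$ and $f_{(1)}$ on $V_2$ (both available because $V_1=0$) give $2\la e,f\ra=\la e,f_{(1)}f\ra=\la e_{(1)}f,f\ra=\tfrac{\lambda^{2}}{2}+\tfrac12\la f^{1/2},f^{1/2}\ra+\tfrac1{16}\la f^{1/16},f^{1/16}\ra$, so $\la e,f\ra=0$ forces $f^{1/2}=f^{1/16}=0$ by positivity, i.e.\ $e_{(1)}f=0$; note only $f_{(1)}f=2f$ is used, so $f$ need merely be a Virasoro vector at this point, and your stronger claim that each norm is separately a nonnegative multiple of $\la e,f\ra$ is not needed. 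With that identity inserted your proof is complete; what it buys is a direct argument avoiding Sakuma's classification (under which $2B$ is the only of the nine cases with $\la e,f\ra=0$), at the price of making the positivity and moonshine-type assumptions explicit, which the paper's citation route simply inherits from \cite{LYY2,Sa}.
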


The followings can also  be found in \cite[Section 3.9]{LYY2}.
\begin{lem}\label{eiej}
Let $U=U_{3C}$ be the $3C$-algebra defined as in \cite{LYY2}. Then
\begin{enumerate}

\item  $U_1=0$ and $U$ is generated by its weight 2 subspace $U_2$ as a VOA.

\item $\dim U_2=3$ and it is spanned by 3 Ising vectors.

\item There exist exactly 3 Ising vectors in $U_2$, say, $ e^0,e^1, e^2$. Moreover,
we have
\[
(e^i)_1 (e^j) =\frac{1}{32}( e^i+e^j -e^k), \quad \text{ and } \quad \la e^i, e^j\ra =\frac{1}{2^8}
\]
for $i\neq j$ and $\{i,j,k\}=\{0,1,2\}$.

\item The subgroup generated by $\tau_{e^0}$, $\tau_{e^1}$, and $\tau_{e^2}$ is isomorphic to the symmetry group $S_3$ and $\tau_{e^i}(e^j)=e^k$, where  $\{i,j,k\}=\{0,1,2\}$.

\item The Virasoro element of $U$ is given by $$\frac{32}{33}(e^0+e^1+e^2).$$

\item Let $a=\frac{32}{33}(e^0+e^1+e^2) -e^0$. Then $a$ is a simple Virasoro vector of central charge $21/22$. Moreover, the subVOA generated by $e^0$ and $a$ is isomorphic to $L(\frac{1}2,0)\tensor L(\frac{21}{22},0)$.
\end{enumerate}
\end{lem}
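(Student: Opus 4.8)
The plan is to derive the six statements from the explicit model of $U_{3C}$ produced in \cite[Section 3.9]{LYY2}, where $U_{3C}$ is realized inside a moonshine type VOA with positive definite invariant form as the VOA generated by two Ising vectors $e,f$ with $\la e,f\ra=2^{-8}$ and $|\tau_e\tau_f|=3$. From that model one reads off $U_1=0$, and as $e,f\in U_2$ generate $U$ this already yields (1). The one input that is not a mere Griess-algebra manipulation is the equality $\dim U_2=3$: while the products and inner products in (3) are pinned down by a handful of fusion relations, the \emph{upper} bound $\dim U_2\le 3$ must be taken from the model (e.g.\ from the graded character of $U_{3C}$). I expect this to be the only real obstacle; everything downstream consists of short computations, which I outline next.

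Granting $\dim U_2=3$: since $U_1=0$ one has $u_1 v=v_1 u$ and $u_3 v=\la u,v\ra\mathbbm 1$ for $u,v\in U_2$, and positive definiteness of the form forces every conformal vector of central charge $\tfrac12$ in $U$ to generate $L(\tfrac12,0)$, i.e.\ to be an Ising vector. Writing a general vector of $U_2$ in a basis of candidate Ising vectors $e^0,e^1,e^2$, the two conditions $v_1 v=2v$ and $\la v,v\ra=\tfrac14$ (which cut out the conformal vectors of central charge $\tfrac12$) become a small polynomial system whose only solutions are the three basis vectors, and the coefficients arising give exactly $(e^i)_1 e^j=\tfrac1{32}(e^i+e^j-e^k)$ and $\la e^i,e^j\ra=2^{-8}$; this is (2) and (3), the structure constants being in any case tabulated in \cite{LYY2}. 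For (5), by $U_1=0$ the vector $\omega_U$ is the unique element of $U_2$ on which $(\omega_U)_1$ acts by $2$; checking this on the $e^i$ with the product formula gives $\omega_U=\tfrac{32}{33}(e^0+e^1+e^2)$, whence $c_U=2\la\omega_U,\omega_U\ra=2\big(\tfrac{32}{33}\big)^2\big(3\cdot\tfrac14+6\cdot 2^{-8}\big)=\tfrac{16}{11}$. For (4), any automorphism of $U$ preserves $U_2$ and the property of generating $L(\tfrac12,0)$, so it permutes $\{e^0,e^1,e^2\}$, and since $U=\la U_2\ra$ it acts faithfully on this set; hence $\la\tau_{e^0},\tau_{e^1},\tau_{e^2}\ra$ embeds in $\mathrm{Sym}\{e^0,e^1,e^2\}\cong S_3$. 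By Theorem \ref{taue} each $\tau_{e^i}$ fixes $e^i$; the hypothesis $|\tau_e\tau_f|=3$ forces $\tau_e,\tau_f\neq 1$, and the third involution, being conjugate to one of these via $\tau_{g(v)}=g\tau_v g^{-1}$, is nontrivial as well, so each $\tau_{e^i}$ is the transposition of the other two. This gives $\tau_{e^i}(e^j)=e^k$, and as the three transpositions generate, the group is $S_3$.

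For (6), put $a=\omega_U-e^0$. By the standard fact that $\omega-e$ is a conformal vector commuting with a given conformal vector $e$ (cf.\ \cite{miy,LYY2}), $a$ is conformal, satisfies $(e^0)_n a=0$ for $n\ge 0$, and has central charge $c_U-\tfrac12=\tfrac{21}{22}$ (consistently, $\la a,a\ra=\la\omega_U,\omega_U\ra-2\la\omega_U,e^0\ra+\la e^0,e^0\ra=\tfrac8{11}-\tfrac14=\tfrac{21}{44}$, using $\la\omega_U,e^0\ra=\tfrac{32}{33}(\tfrac14+2\cdot 2^{-8})=\tfrac14$). Since the invariant form on $U$ is positive definite, its restriction to $\mathrm{Vir}(a)$ is nondegenerate, so $\mathrm{Vir}(a)$ is a quotient of the universal Virasoro VOA of central charge $\tfrac{21}{22}$ admitting a nondegenerate invariant form, hence is simple: $\mathrm{Vir}(a)\cong L(\tfrac{21}{22},0)$. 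Also $\mathrm{Vir}(e^0)\cong L(\tfrac12,0)$ since $e^0$ is an Ising vector. As $e^0$ and $a$ generate mutually commuting subVOAs, the subVOA they generate is $\mathrm{Vir}(e^0)\otimes\mathrm{Vir}(a)\cong L(\tfrac12,0)\otimes L(\tfrac{21}{22},0)$, which is (6). As noted, the genuine difficulty is securing $\dim U_2=3$ from the model of \cite{LYY2}; all the remaining steps are formal.
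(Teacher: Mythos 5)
Your proposal is essentially the paper's approach: the paper gives no independent proof of this lemma but simply quotes it from \cite[Section 3.9]{LYY2}, and you likewise take the genuinely nontrivial inputs (the explicit model, the bound $\dim U_2=3$, the structure constants, and the nontriviality of the $\tau_{e^i}$ on $U$) from that same source. The additional verifications you supply (e.g.\ $c_U=\tfrac{16}{11}$, $\la \omega_U,e^0\ra=\tfrac14$, $\la a,a\ra=\tfrac{21}{44}$, and the simplicity arguments via positive definiteness) are correct and just make explicit the bookkeeping the paper leaves to \cite{LYY2}.
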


The decomposition of $U_{3C}$ as a sum of irreducible $\mathrm{Vir}(e^0)\otimes  \mathrm{Vir}(a)$-modules is also obtained.

\begin{prop}[Theorem 3.35 of \cite{LYY2}]
As a module  of $L(\frac{1}2,0)\tensor L(\frac{21}{22},0)$,
\[
\begin{split}
  U_{3C}
  & \cong  L(\frac{1}2,0)\tensor L(\frac{21}{22},0)
    \oplus L(\frac{1}{2},0)\tensor L(\frac{21}{22},8)
    \\
  & \quad \oplus  L(\frac{1}2,\frac{1}2)\tensor L(\frac{21}{22},\frac{7}{2})
    \oplus L(\frac{1}2,\frac{1}2)\tensor  L(\frac{21}{22},\frac{45}{2})
    \\
  & \quad  \oplus L(\frac{1}{2},\frac{1}{16})\tensor L(\frac{21}{22},\frac{31}{16})
    \oplus L(\frac{1}{2},\frac{1}{16})\tensor L(\frac{21}{22},\frac{175}{16}).
\end{split}
\]
\end{prop}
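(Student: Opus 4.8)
The plan is to exhibit $L(\tfrac12,0)\tensor L(\tfrac{21}{22},0)$ as a sub-VOA of $U_{3C}$ carrying the full conformal vector, to use rationality to split $U_{3C}$ into irreducibles over it, to cut the list of possible summands down to exactly the six displayed ones by an integrality argument, and finally to fix the six multiplicities by a character comparison.

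First I would record, from parts~(5) and~(6) of Lemma~\ref{eiej}, that $\omega_{U_{3C}}=e^0+a$ with $a=\tfrac{32}{33}(e^0+e^1+e^2)-e^0$, and that the subVOA $T:=\mathrm{Vir}(e^0)\tensor\mathrm{Vir}(a)$ generated by $e^0$ and $a$ is isomorphic to $L(\tfrac12,0)\tensor L(\tfrac{21}{22},0)$; in particular $e^0$ and $a$ are mutually commutative and $T$ has the same Virasoro element as $U_{3C}$. Since $L(\tfrac12,0)$ and $L(\tfrac{21}{22},0)$ are the unitary Virasoro minimal models for $(p,q)=(3,4)$ and $(11,12)$, they are rational, hence so is the tensor product $T$, whose irreducible modules are exactly the $L(\tfrac12,h)\tensor L(\tfrac{21}{22},h')$ with $h\in\{0,\tfrac12,\tfrac1{16}\}$ and $h'=h_{r,s}=\tfrac{(12r-11s)^2-1}{528}$, $1\le r\le 10$, $1\le s\le 11$. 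Therefore $U_{3C}$, viewed as a $T$-module, is completely reducible,
\[
  U_{3C}\cong\bigoplus_{h,h'} m_{h,h'}\,L(\tfrac12,h)\tensor L(\tfrac{21}{22},h'),
  \qquad m_{h,h'}\in\Z_{\ge0},
\]
and $m_{0,0}=1$ because $(U_{3C})_0=\C\mathbbm 1$ and $\mathbbm 1$ generates a copy of $T$.

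Next I would cut the list down. As $U_{3C}$ is $\Z_{\ge0}$-graded, $m_{h,h'}=0$ unless $h+h'\in\Z_{\ge0}$, so $h'$ must lie in $\Z$, in $\tfrac12+\Z$, or in $\tfrac{15}{16}+\Z$ according as $h=0$, $\tfrac12$, or $\tfrac1{16}$. Using $12r-11s\equiv r\pmod{11}$ one checks the required fractional part forces $r\in\{1,10\}$, and a direct scan over $s$ then leaves exactly the six weights $h'\in\{0,8\}$ (with $h=0$), $\{\tfrac72,\tfrac{45}2\}$ (with $h=\tfrac12$), $\{\tfrac{31}{16},\tfrac{175}{16}\}$ (with $h=\tfrac1{16}$); for instance $h_{1,1}=0$, $h_{1,7}=8$, $h_{1,5}=\tfrac72$, $h_{1,11}=\tfrac{45}2$, $h_{1,4}=\tfrac{31}{16}$, $h_{1,8}=\tfrac{175}{16}$. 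Hence no $T$-module outside the six in the statement can occur. Counting weight-$2$ vectors pins down one more multiplicity: among the six candidates only $L(\tfrac12,0)\tensor L(\tfrac{21}{22},0)$ and $L(\tfrac12,\tfrac1{16})\tensor L(\tfrac{21}{22},\tfrac{31}{16})$ reach weight~$2$, contributing $2$ and $1$ respectively, so from $\dim(U_{3C})_2=3$ and $m_{0,0}=1$ we get $m_{1/16,31/16}=1$ (consistent with $\tau_{e^0}(e^1)=e^2$, which makes $e^1-e^2\in V_{e^0}(\tfrac1{16})_2$ nonzero).

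It remains to see the other four multiplicities equal $1$; the corresponding summands have lowest weights $8$, $4$, $23$, $11$. For this I would compute the graded character $\chi_{U_{3C}}(q)=\sum_{n\ge0}\dim(U_{3C})_n\,q^n$ from the explicit realization of $U_{3C}$ in \cite{LYY2}, expand the (explicitly known) Rocha--Caridi character formulas for the relevant $L(\tfrac12,h)$ and $L(\tfrac{21}{22},h')$, and match coefficients in $\chi_{U_{3C}}=\sum_{h,h'} m_{h,h'}\chi_{L(1/2,h)}\chi_{L(21/22,h')}$ through degree $23$; the $\tau_{e^0}$- and $\sigma_{e^0}$-eigenspace decomposition of $U_{3C}$ splits this into three independent comparisons, one for each value of $h$ ($h=\tfrac1{16}$ on $V_{e^0}(\tfrac1{16})$, $h=\tfrac12$ on $V_{e^0}(\tfrac12)$, $h=0$ on $V_{e^0}(0)$). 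The main obstacle is producing $\chi_{U_{3C}}$ itself: since $c(U_{3C})=\tfrac12+\tfrac{21}{22}=\tfrac{16}{11}\notin\Z$, $U_{3C}$ is not a lattice VOA, so one must either exploit a concrete embedding of $U_{3C}$ into a larger (lattice) VOA to read $\chi_{U_{3C}}$ off from theta functions, or run a modular-invariance argument determining $\chi_{U_{3C}}$ from rationality, $C_2$-cofiniteness, and the low-degree data of Lemma~\ref{eiej}. Everything else is the routine bookkeeping indicated above.
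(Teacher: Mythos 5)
The paper itself offers no proof of this proposition: it is imported verbatim as Theorem 3.35 of \cite{LYY2}, where it is obtained from the explicit realization of $U_{3C}$ constructed there. So your argument has to stand on its own, and its skeleton is fine as far as it goes. Using Lemma \ref{eiej}(5),(6), the subalgebra $T=\mathrm{Vir}(e^0)\tensor\mathrm{Vir}(a)\cong L(\frac12,0)\tensor L(\frac{21}{22},0)$ does carry the full conformal vector, rationality of the two unitary minimal models gives complete reducibility of $U_{3C}$ as a $T$-module with summands $L(\frac12,h)\tensor L(\frac{21}{22},h_{r,s})$, and your integrality scan is correct: $h_{r,s}=\frac{(12r-11s)^2-1}{528}$, the congruence mod $11$ forces $r\in\{1,10\}$, and the surviving pairs are exactly the six in the statement (your identifications $h_{1,1},h_{1,7},h_{1,5},h_{1,11},h_{1,4},h_{1,8}$ check out). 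Likewise $\dim(U_{3C})_0=1$ and $\dim(U_{3C})_2=3$ correctly give $m_{0,0}=m_{1/16,31/16}=1$.

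The genuine gap is the final step. The four remaining candidate summands have lowest weights $4$, $8$, $11$, $23$, and nothing available to you (only $U_1=0$ and $\dim U_2=3$ from Lemma \ref{eiej}) constrains the graded dimensions in those degrees, so their multiplicities are not determined by what you have written. Your two proposed remedies are not yet arguments: the character $\chi_{U_{3C}}$ is precisely the piece of data that comes out of the explicit construction of $U_{3C}$ inside a lattice-type VOA in \cite{LYY,LYY2}, so invoking it means importing that construction and, in effect, reproducing the proof you are trying to replace; and the alternative of ``determining $\chi_{U_{3C}}$ from rationality, $C_2$-cofiniteness and low-degree data'' is circular, since rationality and $C_2$-cofiniteness of $U_{3C}$ are not established independently here (they are usually deduced from this very decomposition), and modular invariance does not pin down a character from two low-degree coefficients without the full modular and fusion data. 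To close the gap you must either carry out the character computation from a concrete embedding of $U_{3C}$ (e.g.\ its realization in $V_{\sqrt2E_8}$), or exhibit the highest-weight vectors of weights $4,8,11,23$ and bound each multiplicity above by one by an explicit weight-space computation; as written, the proof stops exactly where the real work begins.
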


\begin{df}\label{2B3Ctype}
Let $V$ be  a moonshine type VOA (i.e., $V=\oplus_{n=0}^{\infty} V_n$, $\dim V_0=1$ and $V_1=0$) and let $I\subset V$ be a set of Ising vectors. We say that $I$ satisfies the $(2B, 3C)$ condition
if for any $e,f\in I$ with $e\neq f$, the subVOA generated by $e$ and $f$ is either isomorphic to $U_{2B}$ or $U_{3C}$.
\end{df}

\subsection{Ising vectors in $V_L^+$ with $L(2)=\emptyset$}
Next we review the construction of certain Ising vectors in the lattice type VOA $V_L^+$ with $L(2)=\emptyset$ \cite{dlmn}.  There are two different types, which we call $A_1$-type and $E_8$-type.

\paragraph{\bf $A_1$-type Ising vectors}
Let $a\in L$ be a norm $4$ vector. Then the elements
\[
\omega^{\pm}(a) := \frac{1}{16} a(-1)^2\cdot \mathbbm{1} \pm  \frac{1}4 (e^a+e^{-a})
\]
are Ising vectors (see \cite{dlmn} or \cite{dmz}).

\paragraph{\bf $E_8$-type Ising vectors} Let $E\cong\sqrt2 E_8$ be a sublattice of $L$. Define
$$e_E:=\frac{1}{16}\omega_E+\frac{1}{32}\sum_{\alpha\in E(4)}e^\alpha\in V_E, $$
where $\omega_E$ denotes the conformal element of the lattice VOA $V_E$. Then $e_E$ is an Ising vector (c.f. \cite{dlmn,GL,LYY}).

Now let $L$ be a positive definite even lattice and let
\[
L^*=\{ \al\in \Q\otimes_\Z L\mid \la \al, \be\ra\in \Z \text{ for all
} \be\in L\} \] be the dual lattice of $L$.  For $x\in L^*$, define
a $\Z$-linear map $\varphi_x: L \to \Z_2$ by
\[
\varphi_x(y)=\la x,y\ra \quad \mod 2.
\]
Clearly the map
\[
\begin{split}
\varphi: L^* & \longrightarrow \mathrm{Hom}_\Z(L, \Z_2)\\
         x &\longmapsto \ \varphi_x
\end{split}
\]
is a surjective group homomorphism and $\ker \varphi=2 L^*$. Hence, we have $\mathrm{Hom}_\Z (L, \Z_2)\cong L^*/2L^*$. For
$\alpha\in L^*$, $\varphi_\alpha$ induces an automorphism of $V_L$ given by
\[
\varphi_\alpha(u\otimes e^\beta) =(-1)^{\la \alpha,\beta\ra} u\otimes e^\beta \quad
\text{ for } u\in M(1)\text{ and } \beta \in L.
\]

Now suppose $L=E\cong \sqrt{2}E_8$. Then $E^*= \frac{1}2 E$ and $E^*/2E^*= \frac{1}2 E/E\cong E_8/ 2E_8$. For $x\in
E^*=\frac{1}2 E$,
$$\varphi_x (e_E)= \frac{1}{16}\omega' + \frac{1}{32}
\sum_{\al\in E(4)} (-1)^{\langle
x,\al\rangle}(e^\al+e^{-\al})$$ is also an Ising vector in $V_E^+$ and we
have $256( = 2^8)$ Ising vectors of this form.

The Ising vectors in the lattice type VOA $V_L^+$ with $L(2)=\emptyset$ has been classified by Shimakura~\cite{ShI}. They are either of $A_1$ or $E_8$-type.

\begin{thm}
Let $L$ be an even (positive definite) lattice with  $L(2)=\emptyset$. Suppose $e\in V_L^+$ is an Ising vector. Then either $e=\omega^\pm (\al)$ for some $\al\in L$ with $\la \al, \al\ra=4$ or
$e=\varphi_x(e_E)$ for some sublattice $E\cong \sqrt{2}E_8$ in $L$ and $x\in \frac{1}2 E$.
\end{thm}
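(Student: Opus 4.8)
The plan is to reduce ``$e$ is an Ising vector'' to two identities in the weight-two Griess algebra and then analyze the ``support'' of $e$. Since $L(2)=\emptyset$ we have
\[
(V_L^+)_2 = M(1)_2^+\ \oplus\ \mathrm{Span}_{\mathbb C}\{e^\alpha+e^{-\alpha}\mid \alpha\in L(4)\},
\]
and, $V_L$ (hence $V_L^+$) being unitary, a conformal vector of central charge $1/2$ automatically generates the simple VOA $L(\tfrac12,0)$; so it suffices to determine all $e\in(V_L^+)_2$ with $e_1e=2e$ and $e_3e=\tfrac14\mathbbm 1$. First I would write $e=v+u$, with $v\in M(1)_2^+$ the Heisenberg part and $u=\sum_{\alpha\in\Phi}\lambda_\alpha(e^\alpha+e^{-\alpha})$, where $\Phi\subset L(4)$ is a transversal for $\{\pm\alpha\}$ with all $\lambda_\alpha\ne 0$; put $S=\Phi\cup(-\Phi)$ and $W=\mathrm{Span}_{\mathbb C}S$. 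Then I would record the relevant products on weight $2$: $v_1$ acts on $e^\beta$ by a scalar quadratic in $v$ and $\beta$, vanishing when $\beta$ is orthogonal to the directions occurring in $v$; for $\alpha,\alpha'\in S$ one has $(e^\alpha)_1e^{\alpha'}\ne 0$ exactly when $\langle\alpha,\alpha'\rangle=-2$, in which case it is $\pm e^{\alpha+\alpha'}$ with $|\alpha+\alpha'|^2=4$; and $(e^\alpha)_1e^{-\alpha}$, $(e^\alpha)_3e^{-\alpha}$ are explicit Heisenberg elements.

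Next I would extract three consequences from $e_1e=2e$. Looking at its $M(1)_2^+$-component along directions orthogonal to $W$ shows that this part of $v$ contributes an independent conformal subvector, hence extra central charge unless it vanishes; so $v\in\mathrm{Sym}^2(W)$. Looking at the $\mathbb C e^\beta$-components forces: if $\alpha,\alpha'\in S$ and $\langle\alpha,\alpha'\rangle=-2$, then $\alpha+\alpha'\in S$ (it lies in $L(4)$ and its coefficient cannot vanish). Since distinct non-antipodal vectors of $L(4)$ pair into $\{-3,\dots,3\}$, this closure property yields closure under reflections, so $S$ is a (possibly reducible) root system with all roots of norm $4$; equivalently $\tfrac1{\sqrt2}S$ is a simply-laced root system $R$. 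Finally I would show $R$ is irreducible: otherwise $e=e_1+e_2$ with $e_1,e_2\ne0$ mutually orthogonal conformal vectors; each $\langle e_i\rangle$ embeds in the unitary VOA $V_L^+$, hence is a unitary Virasoro VOA of some central charge $c_i\ne0$, so $c_i\ge\tfrac12$ and $c_1+c_2\ge1>\tfrac12$, contradicting $c_1+c_2=\tfrac12$.

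It remains to treat irreducible $R$. If $S=\{\pm\alpha\}$, the two equations reduce to a short computation inside $V_{\mathbb Z\alpha}$ (as in \cite{dmz}) and give exactly $e=\omega^\pm(\alpha)$. If $R$ has rank $\ge 2$, the $e^\beta$-equations force $|\lambda_\alpha|$ to be constant on $S$, the sign pattern being governed by an arbitrary $\mathbb Z_2$-valued linear functional on $L$ --- precisely the freedom of applying some $\varphi_x$ --- while the Heisenberg equation identifies $v$ with a fixed multiple of $\omega_{\sqrt2 R}$ and kills the remaining components of $v$. Then the normalization $e_3e=\tfrac14\mathbbm1$, together with the resulting expression for the central charge in terms of $R$, leaves only $R=E_8$, whereupon $e=\varphi_x(e_E)$ for the sublattice $E\cong\sqrt2 E_8$ generated by $S$ and $x\in\tfrac12 E=E^*$.

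The main obstacle is this last arithmetic step: computing $e_3e$ (equivalently the central charge) for the conformal vector attached to a generic irreducible simply-laced $R$ and verifying that it equals $\tfrac12$ only when $R=A_1$ or $R=E_8$. One can do this either by a direct Griess-algebra evaluation of $\langle e,e\rangle$ in terms of $\mathrm{rank}\,R$, the number of roots of $R$, and the coefficient determined above, or by identifying $\langle e\rangle$ with a known subVOA of $V_{\sqrt2 R}^+$ whose central charge is classical; either way the computation isolates $E_8$. One last point to check is that in the $E_8$ case $e$ equals $\varphi_x(e_E)$ exactly and not merely up to a scalar --- this is because $\langle e,e\rangle=\tfrac14$ fixes the scale, and the excerpt has already exhibited all $256$ vectors $\varphi_x(e_E)$, $x\in\tfrac12E$, as Ising vectors, so nothing is lost or spurious.
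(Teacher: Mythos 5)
First, a point of comparison: the paper does not prove this theorem at all --- it is quoted from Shimakura \cite{ShI}, so what you are proposing is in effect a reproof of that paper, and it has to be judged on its own completeness. Measured that way, the skeleton has a genuine gap at the step ``the closure property yields closure under reflections, so $S$ is a root system.'' For distinct, non-antipodal $\al,\be\in L(4)$ the possible inner products are $0,\pm1,\pm2,\pm3$; the value $\pm3$ is excluded only because $L(2)=\emptyset$ (you never invoke the rootless hypothesis here), and the value $\pm1$ is not excluded by your argument at all: when $\la\al,\be\ra=\pm1$ the relevant products such as $(e^{\al})_1e^{\be}$ vanish identically (the target space $M(1)\otimes e^{\al+\be}$ has minimal weight $4+\la\al,\be\ra=3>2$), so such pairs produce no $e^\gamma$-components and the equation $e_1e=2e$ imposes no closure condition on them. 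If two support vectors have inner product $\pm1$, then $\tfrac{1}{\sqrt2}S$ is not a root system (the Cartan integers would be $\pm\tfrac12$), the orthogonal decomposition into irreducible components fails, and with it your central-charge additivity argument and the whole ADE case analysis. Ruling out such configurations is a substantive part of the classification, not a formality.

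Two further steps are asserted rather than proved. (i) ``$v\in\mathrm{Sym}^2(W)$'': writing $v$ as a symmetric matrix $A$ on $\mathfrak h$, the Heisenberg component of $e_1e=2e$ gives an equation of the shape $4A^2+\Lambda=2A$ with $\Lambda$ supported on $W$; this alone does not kill the block of $A$ mixing $W$ and $W^\perp$, so the ``independent conformal subvector of extra central charge'' argument needs the norm equation plus positivity --- and positivity presupposes that $e$ lies in (or can be conjugated into) the real form of $V_L^+$, which you never address although you allow arbitrary complex $\lambda_\al$ and a complex $v$; the same issue underlies your irreducibility argument ``$c_i\ge\tfrac12$'' and your appeal to unitarity for simplicity of $\mathrm{Vir}(e)$. (ii) In the rank $\ge 2$ case, the constancy of $|\lambda_\al|$, the identification of the sign pattern with $\varphi_x$ for some $x\in\tfrac12E$ (this needs an argument with the $2$-cocycle, not just ``an arbitrary $\Z_2$-valued functional''), and the central-charge evaluation are all deferred; the arithmetic does work out --- the candidate vector has central charge $2\ell/(h+2)$ as in \cite{dlmn}, and $2\ell/(h+2)=\tfrac12$ forces $R=A_1$ or $E_8$ --- but you flag this yourself as the main obstacle and do not carry it out. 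So the strategy is plausible, but as written the proposal does not constitute a proof; the paper sidesteps all of this by citing \cite{ShI}.
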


\subsubsection{Automorphism group of $V_L^+$ for even lattice $L$ with no roots}
Next we recall some information about the automorphism group of the lattice type VOA $V_L^+$ from \cite{Sh} (see also \cite{LS}).

Let $L$ be a positive definite even lattice and
\[
1 \longrightarrow \langle\kappa\rangle \longrightarrow \hat{L} \bar{
\longrightarrow} L \longrightarrow 1
\]
  a central extension of $L$ by $\langle\kappa\rangle\cong\Z_2$ such that the commutator map $c_0(\al, \be)={\la \al, \be\ra}
\mod 2$, for $\al,\be\in L$. The following theorem is well-known (cf. \cite{FLM}):

\begin{thm}\label{thm:4.10} For an even lattice $L$, the sequence
\[
1 \longrightarrow \mathrm{Hom}(L, \Z_2) { \longrightarrow}
\aut \hat{L} \overset{\eta}\longrightarrow O(L)\longrightarrow  1
\]
is exact.
\end{thm}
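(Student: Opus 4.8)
The plan is to prove exactness at each of the three interior spots of the sequence. Exactness at $\mathrm{Hom}(L,\Z_2)$ and at $\aut\hat L$ is a routine computation with the central extension; the substance of the statement is the surjectivity of $\eta$, which says that every isometry of $L$ lifts to a group automorphism of $\hat L$.

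First I would dispose of the two left spots together. Identify $\Z_2$ with $\la\kappa\ra$. For $f\in\mathrm{Hom}(L,\Z_2)$ put $\hat f(a)=f(\bar a)\,a$ for $a\in\hat L$; since $\kappa$ is central, $\hat f$ is a group automorphism, it fixes $\kappa$ (because $f(\bar\kappa)=f(0)=1$), and $\eta(\hat f)=\mathrm{id}_L$ since $\hat f$ does not change images in $L$. The assignment $f\mapsto\hat f$ is clearly a homomorphism, and it is injective because $\hat f=\mathrm{id}$ forces $f\equiv 1$. Conversely, if $\phi\in\aut\hat L$ has $\eta(\phi)=\mathrm{id}_L$, then $\phi(a)a^{-1}$ lies in the kernel $\la\kappa\ra$ of the projection $\hat L\to L$; centrality of $\la\kappa\ra$ makes $\phi(a)a^{-1}$ depend only on $\bar a$ and makes $\bar a\mapsto\phi(a)a^{-1}$ a homomorphism $L\to\Z_2$ whose associated automorphism is $\phi$. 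Here $\eta(\phi)$ is by definition the automorphism of $L$ induced by $\phi$ (this makes sense because $\kappa$ is the unique element of order $2$ in $\hat L$, hence is fixed by every automorphism), and one checks $\eta(\phi)\in O(L)$ since $\phi$ preserves the commutator pairing $c_0(\alpha,\beta)=\la\alpha,\beta\ra\bmod 2$ together with the squaring data of $\hat L$.

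The core step is surjectivity. Fix a section $\alpha\mapsto e^\alpha$ of $\hat L\to L$ with $2$-cocycle $\epsilon_0\colon L\times L\to\{\pm1\}$, so $e^\alpha e^\beta=\epsilon_0(\alpha,\beta)\,e^{\alpha+\beta}$ and $\epsilon_0(\alpha,\beta)\epsilon_0(\beta,\alpha)^{-1}=(-1)^{\la\alpha,\beta\ra}$. Given $g\in O(L)$, the twisted cochain $\epsilon_0^{\,g}(\alpha,\beta):=\epsilon_0(g\alpha,g\beta)$ is again a $2$-cocycle, and since $g$ is an isometry it has the same commutator map $c_0$ as $\epsilon_0$. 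Now I would invoke the fact that for a free abelian group $L$ of finite rank the commutator pairing induces an isomorphism $H^2(L,\Z_2)\cong\mathrm{Hom}(\wedge^2 L,\Z_2)$: the kernel of this map, namely the classes of symmetric cocycles, is $\mathrm{Ext}^1_\Z(L,\Z_2)=0$. Hence $\epsilon_0^{\,g}$ and $\epsilon_0$ are cohomologous, i.e.\ there is $\mu\colon L\to\{\pm1\}$ with $\epsilon_0(g\alpha,g\beta)=\epsilon_0(\alpha,\beta)\,\mu(\alpha)\mu(\beta)\mu(\alpha+\beta)^{-1}$. Define $\hat g(e^\alpha):=\mu(\alpha)^{-1}e^{g\alpha}$ and $\hat g(\kappa):=\kappa$. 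The displayed identity is exactly what is needed for $\hat g(e^\alpha e^\beta)=\hat g(e^\alpha)\hat g(e^\beta)$, so $\hat g$ is a group homomorphism; the same recipe for $g^{-1}$ produces its inverse, and $\hat g$ visibly covers $g$, so $\eta(\hat g)=g$.

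I expect the main obstacle to be exactly this surjectivity step, and within it the cohomological input that a central $\Z_2$-extension of a free abelian group is pinned down by its commutator form. If one prefers to avoid citing the universal-coefficient/Künneth computation, the same point can be made by hand: choose a $\Z$-basis $\alpha_1,\dots,\alpha_d$ of $L$, observe that $\hat L$ is presented by generators $e^{\alpha_1},\dots,e^{\alpha_d},\kappa$ with $\kappa$ central of order $2$ and relations encoding the $(e^{\alpha_i})^2$ and the commutators $(-1)^{\la\alpha_i,\alpha_j\ra}$, and then verify directly that $e^{\alpha_i}\mapsto e^{g\alpha_i}$, corrected by signs, respects these relations. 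Combining surjectivity with the kernel computation of the previous paragraph yields exactness of the whole sequence; everything else reduces to routine manipulation of the cocycle $\epsilon_0$.
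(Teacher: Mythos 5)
Your kernel computations and your surjectivity argument are sound, and the surjectivity step --- correcting the twisted cocycle $\varepsilon_0(g\,\cdot\,,g\,\cdot\,)$ by a coboundary $\mu$, using that a central extension of a free abelian group by $\Z_2$ is determined by its commutator map --- is exactly the standard argument behind this result; the paper itself gives no proof and simply cites \cite{FLM}, so your write-up supplies the argument that citation points to. Your observation that $\kappa$ is the unique element of order $2$ in $\hat L$, hence fixed by every automorphism, is also correct and makes $\eta$ well defined on all of $\aut\hat{L}$.

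The one step that fails is the claim that $\eta(\varphi)\in O(L)$ for every $\varphi\in\aut\hat{L}$, which you justify by saying $\varphi$ preserves ``the commutator pairing together with the squaring data of $\hat L$.'' The commutator map only records $\la\alpha,\beta\ra \bmod 2$, and $\hat L$ carries no further intrinsic squaring data (its isomorphism type is determined by $c_0$, as your own cohomological computation shows). Consequently the image of $\eta$ is the group $\mathrm{Aut}(L,c_0)$ of automorphisms preserving the mod-$2$ form, which is in general strictly larger than $O(L)$. A concrete counterexample occurs for $L=\sqrt{2}E_8$, one of the lattices used in this very paper: all inner products are even, so $c_0\equiv 0$, $\hat L\cong L\times\Z_2$, and $\eta$ surjects onto $\mathrm{Aut}(L)\cong GL_8(\Z)\supsetneq O(L)$. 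So exactness at $O(L)$ as literally stated cannot be ``checked''; the accurate statement (as in \cite{FLM}, and as the result is used here) either puts $\mathrm{Aut}(L,c_0)$ on the right, or replaces $\aut\hat{L}$ by the preimage $\eta^{-1}(O(L))$ --- in the paper $\eta$ is only ever applied to automorphisms, such as $\tau_e$, already known to induce isometries. Apart from flagging this imprecision instead of asserting it can be verified, your argument is complete.
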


Recall that $\theta$ is the automorphism of $V_L$ defined by
\[
\theta( \al_1(-n_1)\cdots \al_k(-n_k) e^\al)= (-1)^{k}
\al_1(-n_1)\cdots \al_k(-n_k)\theta (e^\al),
\]
where $\theta(a)=a^{-1}\kappa^{\la \bar{a},\bar{a}\ra/2}$ on $\hat{L}$.

\begin{lem}([\cite{Sh}])\label{lem:4.11}
Let $L$ be a positive definite even lattice without roots, i.e.,  the set $L(2)=\{\al\in L\mid \la \al, \al\ra =2\}=\emptyset$. Then the
centralizer $C_{\aut{V_L}}(\theta)$ of $\theta$ in $\aut{V_L}$ is isomorphic to $\aut{\hat{L}}$. In particular,
$C_{\aut{V_L}}(\theta)\cong \aut{\hat{L}}\cong 2^{rank\,L}. O(L).$
\end{lem}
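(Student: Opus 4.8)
The plan is to establish the two inclusions $\aut\hat L\subseteq C_{\aut V_L}(\theta)$ and $C_{\aut V_L}(\theta)\subseteq \aut\hat L$, and then read off the isomorphism type from Theorem~\ref{thm:4.10}. The first inclusion is the easy one: I would check that $\theta$ lies in the \emph{center} of $\aut\hat L$. Every $h\in\aut\hat L$ fixes $\kappa$ and induces an isometry $\sigma=\eta(h)\in O(L)$, so that $\la\sigma x,\sigma x\ra=\la x,x\ra$ for all $x\in L$; comparing $(h\theta)(a)=h(a)^{-1}\kappa^{\la\bar a,\bar a\ra/2}$ with $(\theta h)(a)=h(a)^{-1}\kappa^{\la\overline{h(a)},\overline{h(a)}\ra/2}$ then gives $\theta h=h\theta$ on $\hat L$, hence on $V_L$. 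Thus $\aut\hat L$ centralizes $\theta$.

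For the harder inclusion the crucial point is the hypothesis $L(2)=\emptyset$, which forces $(V_L)_1=\mathfrak h$ (there are no weight-one vectors $e^\al$). I would first record two general facts about $\aut V_L$. First, the simultaneous eigenspaces of the commuting operators $\{a_0\mid a\in\mathfrak h\}$ on $V_L$ are exactly the subspaces $M(1)\otimes e^\beta$ with $\beta\in L$, so any $g\in\aut V_L$ permutes these eigenspaces and therefore restricts on $\mathfrak h=(V_L)_1$ to an automorphism $\bar g$ of the lattice $L$ (it preserves the invariant form and carries $L$ onto $L$); moreover $g\mapsto\bar g$ is onto $O(L)$, since already $\aut\hat L\subseteq\aut V_L$ surjects onto $O(L)$ via $\eta$ by Theorem~\ref{thm:4.10}. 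Second, the kernel $K=\{g\in\aut V_L\mid g|_{\mathfrak h}=\mathrm{id}\}$ is exactly the diagonal torus $\{g_\chi\mid\chi\in\mathrm{Hom}(L,\C^\times)\}$, where $g_\chi(u\otimes e^\beta)=\chi(\beta)\,u\otimes e^\beta$: such a $g$ fixes the Heisenberg subalgebra $M(1)$ pointwise (it is generated by $\mathfrak h$ under the modes $b(m)$, with which $g$ commutes), hence $g(e^\beta)=\lambda_\beta e^\beta$ for scalars $\lambda_\beta$, and comparing the leading coefficient of $Y(e^\beta,z)e^\gamma$ before and after applying $g$ shows that $\beta\mapsto\lambda_\beta$ is a homomorphism $L\to\C^\times$.

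Now let $g\in C_{\aut V_L}(\theta)$. Using the surjectivity in the first fact, choose $h\in\aut\hat L$ with $\eta(h)=\bar g$; then $k:=h^{-1}g$ lies in $K$, so $k=g_\chi$ for some $\chi\in\mathrm{Hom}(L,\C^\times)$. Since $\theta$ is central in $\aut\hat L$, writing $g=hg_\chi$ reduces the relation $g\theta=\theta g$ to $g_\chi\theta=\theta g_\chi$. A direct computation — conjugation by $\theta$ replaces $e^\beta$ by $\pm e^{-\beta}$ — gives $\theta g_\chi\theta^{-1}=g_{\chi^{-1}}$, so $\chi^2=1$, i.e. $\chi\in\mathrm{Hom}(L,\Z_2)$; but then $g_\chi$ is one of the automorphisms $\varphi_x$ with $x\in L^*$ and hence lies in $\aut\hat L$, so $g=hg_\chi\in\aut\hat L$. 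This proves $C_{\aut V_L}(\theta)=\aut\hat L$, and the exact sequence of Theorem~\ref{thm:4.10}, together with $\mathrm{Hom}(L,\Z_2)\cong\Z_2^{\,rank\,L}$, yields $C_{\aut V_L}(\theta)\cong 2^{rank\,L}.O(L)$.

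The one genuinely VOA-theoretic step — and the place I expect the real work to be — is the description of the kernel $K$ as the diagonal torus $\{g_\chi\}$; everything else is an exercise in the combinatorics of $\hat L$ and $O(L)$ once the role of the no-roots hypothesis, namely $(V_L)_1=\mathfrak h$, has been isolated.
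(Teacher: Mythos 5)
Your proof is correct; note, however, that the paper itself does not prove Lemma \ref{lem:4.11} at all --- it is quoted from Shimakura [Sh] (and ultimately rests on Dong--Nagatomo's determination of $\aut V_L$), so the comparison is with the cited source rather than with an in-paper argument, and what you wrote is essentially that standard proof. The skeleton matches: since $L$ is rootless, $(V_L)_1=\mathfrak h$, every automorphism induces an element $\bar g\in O(L)$, the kernel of $g\mapsto\bar g$ is the torus $\{g_\chi\mid\chi\in\mathrm{Hom}(L,\C^\times)\}$, surjectivity of $\eta$ (Theorem \ref{thm:4.10}) gives $\aut V_L=\aut\hat{L}\cdot\{g_\chi\}$, and since conjugation by $\theta$ inverts the torus, the centralizer of $\theta$ meets the torus exactly in its $2$-torsion $\mathrm{Hom}(L,\Z_2)$, which already lies in $\aut\hat{L}$; hence $C_{\aut V_L}(\theta)=\aut\hat{L}\cong 2^{\mathrm{rank}\,L}.O(L)$. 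Two small points deserve an explicit line if you write this up: (i) the embedding $\aut\hat{L}\hookrightarrow\aut V_L$ is injective (an element acting trivially on $V_L$ has trivial image in $O(L)$ and trivial associated character), which is what lets you conclude ``isomorphic to $\aut\hat{L}$'' rather than ``a quotient of it''; and (ii) in your description of the kernel, $g(e^\beta)=\lambda_\beta e^\beta$ should be justified by noting that $e^\beta$ spans the lowest $L(0)$-weight piece of the $\mathfrak h$-weight space $M(1)\otimes e^\beta$, and the fact that $\bar g$ is an isometry (needed to see that the weight lattice is carried to itself by $\bar g$, not just by its inverse adjoint) follows from $a_1b=\la a,b\ra\mathbbm{1}$ for $a,b\in\mathfrak h$ together with $g\mathbbm{1}=\mathbbm{1}$. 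Both are routine, so your argument genuinely fills in the proof the paper outsources, with no gaps of substance.
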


\begin{prop}[Proposition 4.12 of \cite{LS}] \label{taue=tM}
Let $E\cong {\sqrt{2}E_8}$ be a sublattice of $L$ and $ e$ an Ising  vector in $V_L$ of the form $\varphi_x(e_E)$. Then $\tau_e\in
C_{\aut{V_L}}(\theta)$ and the image of $\tau_e$ under the canonical epimorphism
\[
\eta: C_{\aut{V_L}}(\theta)(\cong
\aut \hat{L}) \longrightarrow C_{\aut{V_L}}(\theta)/Hom(\Lambda,\Z_2)\cong O(L)
\]
is equal to $t_E$, the SSD involution associated to $E$.
\end{prop}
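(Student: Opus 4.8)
The plan is to compute the action of $\tau_e$ on $V_L$ explicitly and compare it to the known action of $t_E$, which is an element of $O(L)$ lifted into $\aut{V_L}$ via the exact sequences of Theorem~\ref{thm:4.10} and Lemma~\ref{lem:4.11}. First I would reduce to the case $x=0$, i.e.\ $e=e_E$: since $\varphi_x$ is an automorphism of $V_L$ centralizing $\theta$ and mapping $V_E^+$ to itself, conjugation gives $\tau_{\varphi_x(e_E)}=\varphi_x\tau_{e_E}\varphi_x^{-1}$, and $\varphi_x$ lies in the kernel $\mathrm{Hom}(L,\Z_2)$ of $\eta$, so it does not affect the image in $O(L)$; hence it suffices to show $\eta(\tau_{e_E})=t_E$. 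The key structural input is the $L(\tfrac12,0)\tensor L(\tfrac{1}{2},0)\tensor\cdots$-type decomposition of $V_E=V_{\sqrt{2}E_8}$ with respect to $\mathrm{Vir}(e_E)$; more usefully, one knows (e.g.\ from \cite{dlmn,GL,LYY}) that $V_E^+$ decomposes as $\mathrm{Vir}(e_E)$-module with the $\tfrac{1}{16}$-eigenspace $V_{e_E}(\tfrac{1}{16})$ realized concretely inside the twisted/untwisted pieces, so that $\tau_{e_E}$ acts on $V_E$ as an explicitly identifiable automorphism. In fact the cleanest route is: the conformal vector $e_E$ has the property that $V_E = V_{e_E}(0)\oplus V_{e_E}(\tfrac12)\oplus V_{e_E}(\tfrac{1}{16})$, and this is precisely the $\Z_2$-grading coming from the SSD involution $t_E$ acting on $V_E$ — the $+1$-eigenspace of $t_E$ on $V_E$ being $V_{e_E}(0)\oplus V_{e_E}(\tfrac12)$ and the $-1$-eigenspace being $V_{e_E}(\tfrac{1}{16})$.

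The main computation, then, is to verify that statement about $t_E$: with $E\cong\sqrt2 E_8$ one has $E^*=\tfrac12 E$, and $t_E$ as an element of $O(E)$ acts on $E$ by $-1$; lifting to $\aut{\hat{E}}$ and then to $\aut{V_E}$, one gets an automorphism whose action on $M(1)\tensor\C\{E\}$ is $\theta$-twisted in an appropriate sense. The degree-$2$ comparison is the crucial sanity check and essentially forces the identification: on the Griess algebra level one computes $\tau_{e_E}(e^\al+e^{-\al})$ for $\al\in E(4)$ using the fusion rules $L(\tfrac12,\tfrac{1}{16})\times L(\tfrac12,\tfrac{1}{16}) = L(\tfrac12,0)+L(\tfrac12,\tfrac12)$ together with the explicit Griess-algebra products $(e_E)_1(e^\al+e^{-\al})$, and one checks this matches $t_E(e^\al+e^{-\al})=e^{-\al}+e^{\al}$ up to the sign bookkeeping in $\C\{E\}$ — and similarly on $M(1)$, where $t_E$ sends $\al(-1)\mathbbm1$ to $-\al(-1)\mathbbm1$ while $\tau_{e_E}$ does the same because $\al(-1)\mathbbm1\in V_{e_E}(\tfrac{1}{16})$ for $\al\notin 2E^*$ (and lies in $V_{e_E}(0)$ for appropriate combinations). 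Once the two automorphisms of $V_E$ agree, one extends to $V_L$: since $e_E\in V_E\subset V_L$, the Miyamoto involution $\tau_{e_E}$ on $V_L$ restricts to the one on $V_E$ and acts on each $V_E$-submodule of $V_L$ by the corresponding module automorphism; because $t_E\in O(L)$ (here using the RSSD/SSD property, Definition~\ref{rssd}) and its lift to $\aut{V_L}$ does the same, they coincide on all of $V_L$, hence $\eta(\tau_{e_E})=t_E$.

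The step I expect to be the main obstacle is the careful bookkeeping in the twisted group algebra $\C\{L\}$ when matching the two lifts: both $\tau_{e_E}$ and the chosen lift of $t_E$ are only well-defined up to the kernel $\mathrm{Hom}(L,\Z_2)$, so one must either fix a consistent $2$-cocycle and track signs through the fusion-rule computation, or — more slickly — argue that the image in $O(L)$ is insensitive to this ambiguity and pin it down by its action on a spanning set of $E$ (namely $\al(-1)\mathbbm1$ for $\al\in E$), where the $\tfrac{1}{16}$ vs.\ $\{0,\tfrac12\}$ dichotomy for $\mathrm{Vir}(e_E)$ directly reads off the $\pm1$-eigenspace decomposition. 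The secondary technical point is justifying that $\tau_{e_E}$ centralizes $\theta$ in the first place — this follows because $e_E\in V_E^+\subset V_L^{\theta}$ is $\theta$-fixed, so $\theta$ preserves the $\mathrm{Vir}(e_E)$-isotypic decomposition and hence commutes with $\tau_{e_E}$, placing $\tau_{e_E}$ in $C_{\aut{V_L}}(\theta)\cong\aut{\hat L}$ by Lemma~\ref{lem:4.11} and making the map $\eta$ applicable.
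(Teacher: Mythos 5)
The paper does not actually prove this proposition: it is quoted as Proposition 4.12 of \cite{LS}, so there is no internal argument to compare with, and your proposal has to be judged against the standard proof in the literature. Your core strategy is the right one and is essentially that proof: $e=\varphi_x(e_E)$ is $\theta$-invariant, so $\tau_e$ commutes with $\theta$ and lies in $C_{\aut{V_L}}(\theta)\cong\aut\hat{L}$ by Lemma~\ref{lem:4.11}; an element of $\aut\hat{L}$ acts on $M(1)$ through its image in $O(L)$, so $\eta(\tau_e)$ is read off from the action of $\tau_e$ on $\mathfrak h(-1)\mathbbm 1$, $\mathfrak h=\C\otimes_\Z L$; and the decisive computation is that $h(-1)\mathbbm 1$ is a $\mathrm{Vir}(e)$-highest weight vector of weight $\frac{1}{16}$ when $h\in\C\otimes E$ (so $\tau_e$ negates it) and of weight $0$ when $h\perp E$ (so $\tau_e$ fixes it), which is exactly $t_E$ on $\mathfrak h$. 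Your ``slicker'' alternative in the penultimate paragraph is precisely this argument, and it correctly sidesteps the $\mathrm{Hom}(L,\Z_2)$ ambiguity that makes your stronger claim (that $\tau_{e_E}|_{V_E}$ literally coincides with a chosen lift of $t_E$) delicate.

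Two concrete points need repair. First, the opening reduction to $x=0$ is not legitimate as stated: for $x\in\frac{1}{2}E=E^*$, $\varphi_x$ is in general only an automorphism of $V_E$, not of $V_L$ (one needs $x\in L^*$, i.e.\ $\la E,L\ra\subset 2\Z$, which fails already for $L=A_2\otimes E_8$, the very lattices used in this paper), so the identity $\tau_{\varphi_x(e_E)}=\varphi_x\tau_{e_E}\varphi_x^{-1}$ cannot be written inside $\aut{V_L}$. Fortunately the reduction is unnecessary: $\theta$-invariance of $\varphi_x(e_E)$ holds for every $x$, and the key computation is insensitive to the signs $(-1)^{\la x,\al\ra}$, because $(e^{\al})_1\,h(-1)\mathbbm 1=0$ for every norm-$4$ vector $\al\in E(4)$, whence $(\varphi_x(e_E))_1\,h(-1)\mathbbm 1=\frac{1}{16}(\pi_E h)(-1)\mathbbm 1$ with $\pi_E$ the orthogonal projection to $\C\otimes E$, for all $x$. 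Second, your parenthetical ``$\al(-1)\mathbbm 1\in V_{e_E}(\frac{1}{16})$ for $\al\notin 2E^*$'' is garbled: here $2E^*=E$, and the correct (and needed) statement carries no condition at all --- $h(-1)\mathbbm 1\in V_{e}(\frac{1}{16})$ for every nonzero $h\in\C\otimes E$, which follows from the vanishing just quoted together with $(\omega_E)_1 h(-1)\mathbbm 1=h(-1)\mathbbm 1$ and a weight argument showing the higher modes annihilate $h(-1)\mathbbm 1$. Finally, to pin down an element of $O(L)$ you must also record the action on $\C\otimes \mathrm{ann}_L(E)$, not only on a spanning set of $E$; that part is easy ($h(-1)\mathbbm 1\in V_e(0)$ there), but it should be said. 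With these repairs your outline becomes a complete proof along the same lines as \cite{LS}; the Griess-algebra ``sanity check'' on $e^{\al}+e^{-\al}$ and the attempted identification of $\tau_{e_E}|_{V_E}$ with a specific lift of $t_E$ can simply be dropped.
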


\section{Weyl groups and VOA generated by Ising vectors}\label{sec:3}

In this section, we shall construct a certain VOA generated by Ising vectors such that the subgroup generated by the
corresponding Miyamoto involutions is isomorphic to the Weyl group of a root system of type $A$, $D$ or $E$. The main idea is to consider a certain integral lattice which has many sublattices isometric to $\sqrt{2}E_8$.

We use the standard model for the root lattices of type $A_{n-1}$ and $D_n$~\cite{CS}, namely,
\[
\begin{split}
A_{n-1}&=\{ \sum_{i=1}^n a_i\epsilon_i\mid a_i\in \Z, \sum_{i=1}^n a_i=0\},\\
D_n&= \{ \sum_{i=1}^n a_i\epsilon_i\mid a_i\in \Z, \sum_{i=1}^n a_i\equiv 0 \mod 2 \},
\end{split}
\]
where $\{\epsilon_1, \dots, \epsilon_n\}$ is an orthonormal basis of $\R^n$.

The root lattice of type $E_8$ is then given by
\[
E_8=span_\Z\left\{ D_8, \frac{1}2(\epsilon_1 +\dots +\epsilon_8)\right\}.
\]

Recall that  the lattice VOA $V_{E_8}$ is an irreducible level $1$ highest weight representation
of the affine Kac-Moody Lie algebra of type $E_8^{(1)}$ \cite{FLM}. Moreover,
$V_{E_8}\cong L_{E_8}(1,0)$
as a VOA.

\begin{nota}\label{mij}
Let $X= E_8\perp \cdots\perp E_8$ be an orthogonal sum of $n$ copies of $E_8$.
Let $X^{(i)}$ be the $i$-th copy of $E_8$ in $X$ and let
$ \iota_i:E_8\rightarrow X^{(i)}\subset X$
be the natural injection.

For any $i\neq j$, we define
\[
\nu_{i,j}:=\iota_i-\iota_j\quad \text{ and }\quad \mu_{i,j}:=\iota_i+\iota_j.
\]
We also define $d=d_n : = \iota_1 +\cdots + \iota_n$.
\end{nota}

\begin{lem}\label{AD}
Let $\mathcal{A}_{n-1}= span_\Z\{ \nu_{i,j}(\al)\mid \al \in E_8, 1\leq i<j \leq n\}$ and
$$\mathcal{D}_n= span_\Z\{ \nu_{i,j}(\al), \mu_{i,j}(\al)\mid \al \in E_8, 1\leq i<j \leq n\}.$$
Then
$\mathcal{A}_{n-1}\cong A_{n-1}\otimes E_8$ and $\mathcal{D}_{n}\cong D_n\otimes E_8$.
\end{lem}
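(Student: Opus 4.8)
The plan is to exhibit explicit isometries between the abstractly-described lattices $\mathcal{A}_{n-1}$, $\mathcal{D}_n$ and the tensor product lattices $A_{n-1}\otimes E_8$, $D_n\otimes E_8$, and to check that they respect the bilinear forms. First I would set up the identification of $X=E_8\perp\cdots\perp E_8$ with $\Z^n\otimes E_8$: writing $\{f_1,\dots,f_n\}$ for the standard basis of $\R^n$, the map $\Z^n\otimes E_8\to X$ sending $f_i\otimes\al\mapsto\iota_i(\al)$ is an isometry, since $\langle f_i\otimes\al, f_j\otimes\be\rangle = \delta_{ij}\langle\al,\be\rangle_{E_8}$ matches the orthogonal-sum inner product on $X$. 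Under this identification, $\nu_{i,j}(\al)=\iota_i(\al)-\iota_j(\al)$ corresponds to $(f_i-f_j)\otimes\al$ and $\mu_{i,j}(\al)$ corresponds to $(f_i+f_j)\otimes\al$.

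Next I would recall from the standard model (with $\epsilon_i=f_i$) that $A_{n-1}=\{\sum a_i\epsilon_i\mid \sum a_i=0\}$ is spanned over $\Z$ by the vectors $\epsilon_i-\epsilon_j$, $1\le i<j\le n$ (indeed already by the consecutive differences $\epsilon_i-\epsilon_{i+1}$), and that $D_n=\{\sum a_i\epsilon_i\mid \sum a_i\equiv 0\bmod 2\}$ is spanned over $\Z$ by the vectors $\epsilon_i\pm\epsilon_j$, $1\le i<j\le n$. Consequently $A_{n-1}\otimes E_8$ is spanned by the elements $(\epsilon_i-\epsilon_j)\otimes\al$ with $\al$ ranging over a $\Z$-basis of $E_8$ (equivalently over all of $E_8$), and similarly $D_n\otimes E_8$ is spanned by the $(\epsilon_i\pm\epsilon_j)\otimes\al$. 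Matching these spanning sets with the definitions $\mathcal{A}_{n-1}=\mathrm{span}_\Z\{\nu_{i,j}(\al)\}$ and $\mathcal{D}_n=\mathrm{span}_\Z\{\nu_{i,j}(\al),\mu_{i,j}(\al)\}$ shows, via the isometry of the previous paragraph, that the images of $\mathcal{A}_{n-1}$ and $\mathcal{D}_n$ in $\Z^n\otimes E_8$ are exactly $A_{n-1}\otimes E_8$ and $D_n\otimes E_8$. Since the identification $X\cong\Z^n\otimes E_8$ is an isometry and these are sublattices, the induced maps $\mathcal{A}_{n-1}\cong A_{n-1}\otimes E_8$ and $\mathcal{D}_n\cong D_n\otimes E_8$ are isometries as well.

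The only point requiring genuine care — the ``main obstacle,'' though it is modest — is verifying that $\{\epsilon_i-\epsilon_j\}$ really generates $A_{n-1}$ over $\Z$ (not merely over $\Q$) and that $\{\epsilon_i\pm\epsilon_j\}$ generates $D_n$ over $\Z$; one must make sure no finite-index issue is hidden. For $A_{n-1}$ this follows because any $\sum a_i\epsilon_i$ with $\sum a_i=0$ can be written as $\sum_{i<n} a_i(\epsilon_i-\epsilon_n)$, an integral combination; for $D_n$, given $\sum a_i\epsilon_i$ with $\sum a_i$ even, one subtracts off multiples of $\epsilon_1+\epsilon_2$ and $\epsilon_1-\epsilon_2$ etc.\ to reduce to the zero vector. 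I would also note explicitly that tensoring a $\Z$-spanning set of a lattice $P$ with a $\Z$-basis of $E_8$ yields a $\Z$-spanning set of $P\otimes E_8$, which is immediate from the definition of the tensor product of lattices given above. With these elementary facts in hand the lemma follows, and I would keep the write-up to a few lines, citing Lemma~\ref{gl3c}'s ambient conventions and the standard-model description of $A_{n-1}$ and $D_n$ only as needed.
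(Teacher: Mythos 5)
Your proof is correct and is essentially the paper's argument: both come down to the natural isometric identification $\iota_i(\al)\leftrightarrow \epsilon_i\otimes\al$ of $X$ with $\Z^n\otimes E_8$, the paper implementing it by mapping the basis $\{(\epsilon_i-\epsilon_{i+1})\otimes\al_k\}$ to $\{\nu_{i,i+1}(\al_k)\}$ and comparing Gram matrices, you by checking the ambient isometry once and then matching $\Z$-spanning sets of the sublattices. Your explicit verification that $\{\epsilon_i-\epsilon_j\}$ spans $A_{n-1}$ and $\{\epsilon_i\pm\epsilon_j\}$ spans $D_n$ over $\Z$ just makes transparent the surjectivity onto $\mathcal{A}_{n-1}$ and $\mathcal{D}_n$ that the paper leaves implicit.
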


\begin{proof}
Let $\{\al_1, \dots, \al_8\}$ be a basis of $E_8$. Then
$\{(\epsilon_i- \epsilon_{i+1})\otimes \al_k\mid i=1, \dots, n-1, k=1, \dots,8\}$
is a basis of $A_{n-1}\otimes E_8$.  Define a $\Z$-linear map $f:A_{n-1}\otimes E_8 \to \mathcal{A}_{n-1}$ by
\[
f( (\epsilon_i- \epsilon_{i+1})\otimes \al_k) = \nu_{i, i+1}(\al_k).
\]
By a direct calculation, it is easy to verify that the Gram matrix
\[
\left (\la  (\epsilon_i- \epsilon_{i+1})\otimes \al_k, (\epsilon_j- \epsilon_{j+1})\otimes \al_\ell\ra \right)
\]
of $A_{n-1}\otimes E_8$ is the same as the Gram matrix
$
\left (\la  \nu_{i, i+1}( \al_k),   \nu_{j, j+1}( \al_\ell) \ra \right)
$
of $\mathcal{A}_{n-1}$.  Therefore, $f$ is an isometry and $\mathcal{A}_{n-1}\cong A_{n-1}\otimes E_8$.
Similarly, we also have $\mathcal{D}_{n}\cong D_{n}\otimes E_8$.
\end{proof}

\medskip

The next two lemmas can also be proved by computing their Gram matrices.

\begin{lem}\label{E8E8}
Let $n=8$ and let
\[
\mathcal{E}_8 =span_\Z\left\{ \mathcal{D}_8, \frac{1}2 d(E_8)\right\} \subset \frac{1}2 X.
\]
Then $\mathcal{E}_8 \cong E_8\otimes E_8$, which is an even unimodular lattice.
\end{lem}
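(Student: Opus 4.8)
The plan is to exhibit an explicit $\Z$-linear isometry $g : E_8 \otimes E_8 \to \mathcal{E}_8$ and then verify that it preserves the Gram matrix on a basis, exactly as in the proof of Lemma \ref{AD}. First I would fix a basis $\{\alpha_1,\dots,\alpha_8\}$ of $E_8$, and recall that $E_8 = \mathrm{span}_\Z\{D_8, \tfrac12(\epsilon_1+\cdots+\epsilon_8)\}$, so that $E_8\otimes E_8 = \mathrm{span}_\Z\{D_8\otimes E_8, \tfrac12(\epsilon_1+\cdots+\epsilon_8)\otimes E_8\}$ inside $\tfrac12(\R^8\otimes E_8)$. By Lemma \ref{AD} we already have the isometry $\mathcal{D}_8 \cong D_8\otimes E_8$ sending $\nu_{i,i+1}(\alpha_k)$ and $\mu_{i,i+1}(\alpha_k)$ to the corresponding tensors $(\epsilon_i-\epsilon_{i+1})\otimes\alpha_k$ and $(\epsilon_i+\epsilon_{i+1})\otimes\alpha_k$; the point is that this identification extends to the enlarged lattice by sending $\tfrac12 d(\alpha_k) = \tfrac12(\iota_1+\cdots+\iota_8)(\alpha_k)$ to $\tfrac12(\epsilon_1+\cdots+\epsilon_8)\otimes\alpha_k$, because $d(\alpha_k)$ is the image under the $\mathcal{D}_8$-isometry of $(\epsilon_1+\cdots+\epsilon_8)\otimes\alpha_k \in D_8\otimes E_8$ (the vector $\epsilon_1+\cdots+\epsilon_8$ lies in $D_8$).

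Concretely, I would pick a $\Z$-basis of $E_8\otimes E_8$ adapted to the construction — for instance $\{(\epsilon_i-\epsilon_{i+1})\otimes\alpha_k : 1\le i\le 7\}\cup\{\tfrac12(\epsilon_1+\cdots+\epsilon_8)\otimes\alpha_k\}$, ranging over $k=1,\dots,8$, which is a standard $\Z$-basis of $E_8$ tensored with the basis of the second factor — and define $g$ on this basis by $g((\epsilon_i-\epsilon_{i+1})\otimes\alpha_k) = \nu_{i,i+1}(\alpha_k)$ and $g(\tfrac12(\epsilon_1+\cdots+\epsilon_8)\otimes\alpha_k) = \tfrac12 d(\alpha_k)$. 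One checks that the images indeed lie in $\mathcal{E}_8$ and $\Z$-span it (clear from the definition of $\mathcal{E}_8$), and then that $g$ is a bijection on these bases, hence a $\Z$-module isomorphism. The isometry claim reduces to comparing two Gram matrices entry by entry: $\langle \nu_{i,i+1}(\alpha_k), \nu_{j,j+1}(\alpha_\ell)\rangle$ versus $\langle(\epsilon_i-\epsilon_{i+1})\otimes\alpha_k,(\epsilon_j-\epsilon_{j+1})\otimes\alpha_\ell\rangle$ (already done in Lemma \ref{AD}), together with the mixed and pure entries involving $\tfrac12 d$. Since $\langle \iota_p(\beta),\iota_q(\gamma)\rangle = \delta_{pq}\langle\beta,\gamma\rangle_{E_8}$, we get $\langle \tfrac12 d(\alpha_k), \tfrac12 d(\alpha_\ell)\rangle = \tfrac14\sum_{p=1}^8 \langle\alpha_k,\alpha_\ell\rangle_{E_8} = 2\langle\alpha_k,\alpha_\ell\rangle_{E_8}$, matching $\langle \tfrac12(\epsilon_1+\cdots+\epsilon_8)\otimes\alpha_k, \tfrac12(\epsilon_1+\cdots+\epsilon_8)\otimes\alpha_\ell\rangle = \tfrac14\cdot 8\cdot\langle\alpha_k,\alpha_\ell\rangle_{E_8}$; similarly $\langle \nu_{i,i+1}(\alpha_k), \tfrac12 d(\alpha_\ell)\rangle = \tfrac12(\langle\alpha_k,\alpha_\ell\rangle - \langle\alpha_k,\alpha_\ell\rangle) = 0$ when $i+1 < 8$ and $= -\tfrac12\langle\alpha_k,\alpha_\ell\rangle$ when $i=7$, matching the tensor side. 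This confirms $\mathcal{E}_8 \cong E_8\otimes E_8$.

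Finally, for unimodularity: $E_8$ is unimodular, so $E_8\otimes E_8$ is unimodular as well — in general $\mathcal{D}(A\otimes B)$ embeds into $\mathcal{D}(A)\otimes(\text{stuff})$ and collapses when either factor is unimodular; concretely $\det(E_8\otimes E_8) = (\det E_8)^8\cdot(\det E_8)^8 = 1$ since the Gram matrix of $A\otimes B$ is the Kronecker product and $\det(G_A\otimes G_B) = (\det G_A)^{\mathrm{rk}\,B}(\det G_B)^{\mathrm{rk}\,A}$. Evenness is immediate since all inner products computed above are even integers (they are $\langle\cdot,\cdot\rangle_{E_8}$-values times integers, and $E_8$ is even, plus the norm $\langle\tfrac12 d(\alpha),\tfrac12 d(\alpha)\rangle = 2\langle\alpha,\alpha\rangle_{E_8}$ is a multiple of $4$). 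I do not anticipate a genuine obstacle here; the only mild care needed is bookkeeping in the Gram-matrix comparison and making sure the chosen $\Z$-basis of $E_8$ (the simple-root basis together with the half-sum generator, or whichever presentation is cleanest) is handled consistently across both tensor factors.
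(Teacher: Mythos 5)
Your overall strategy is the one the paper intends (it proves this lemma "by computing Gram matrices", just as in Lemma \ref{AD}), but the execution has a genuine gap: the set $\{(\epsilon_i-\epsilon_{i+1})\otimes\alpha_k:\ 1\le i\le 7\}\cup\{\tfrac12(\epsilon_1+\cdots+\epsilon_8)\otimes\alpha_k\}$ is \emph{not} obtained from a $\Z$-basis of $E_8$. The vector $s=\tfrac12(\epsilon_1+\cdots+\epsilon_8)$ is orthogonal to every $\epsilon_i-\epsilon_{i+1}$ and has norm $2$, so $\mathrm{span}_\Z\{\epsilon_i-\epsilon_{i+1},\,s\}\cong A_7\perp A_1$ has determinant $8\cdot 2=16$, i.e.\ it is a sublattice of index $4$ in $E_8$; to generate $E_8$ you also need a $D$-type root such as $\epsilon_1+\epsilon_2$. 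Correspondingly, on the other side your images $\nu_{i,i+1}(\alpha_k)$ and $\tfrac12 d(\alpha_k)$ do not $\Z$-span $\mathcal{E}_8$: the generators $\mu_{i,j}(\alpha)$ of $\mathcal{D}_8$ are missing, and a determinant count shows you only get a sublattice of index $4^8$. So the claim "the images lie in $\mathcal{E}_8$ and $\Z$-span it (clear from the definition)" fails, and with it the whole identification. There is also a small computational slip: $\langle\nu_{7,8}(\alpha_k),\tfrac12 d(\alpha_\ell)\rangle=\tfrac12(\langle\alpha_k,\alpha_\ell\rangle-\langle\alpha_k,\alpha_\ell\rangle)=0$, not $-\tfrac12\langle\alpha_k,\alpha_\ell\rangle$ (harmlessly, both sides of your comparison are in fact $0$, but the error signals the same confusion about which basis is being used).

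The repair is routine. Either take an honest $\Z$-basis of $E_8$ in the first factor (e.g.\ the Bourbaki simple roots, which include $\epsilon_1+\epsilon_2$ and the half-vector) and map $(\epsilon_i\pm\epsilon_j)\otimes\alpha_k\mapsto\nu_{i,j}(\alpha_k)$ resp.\ $\mu_{i,j}(\alpha_k)$ and $\tfrac12(\epsilon_1+\cdots+\epsilon_8)\otimes\alpha_k\mapsto\tfrac12 d(\alpha_k)$, then compare Gram matrices as you do; or, more cleanly, observe that $\epsilon_i\otimes\alpha\mapsto\iota_i(\alpha)$ defines an isometry of $\Z^8\otimes E_8$ onto $X$ which extends $\Q$-linearly, carries $D_8\otimes E_8$ onto $\mathcal{D}_8$ (Lemma \ref{AD}) and $\tfrac12(\epsilon_1+\cdots+\epsilon_8)\otimes E_8$ onto $\tfrac12 d(E_8)$, hence carries $E_8\otimes E_8=\mathrm{span}_\Z\{D_8\otimes E_8,\ \tfrac12(\epsilon_1+\cdots+\epsilon_8)\otimes E_8\}$ onto $\mathcal{E}_8$. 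Your final paragraph on evenness and unimodularity of $E_8\otimes E_8$ (Kronecker product of Gram matrices, $\det=1$, diagonal norms divisible by $4$ and cross terms even) is fine once the isometry is established correctly.
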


\begin{lem}\label{E6E7}
Let
\[
\begin{split}
\mathcal{E}_7 &= \{b \in  \mathcal{E}_8\mid \la b, \frac{1}2 d(\al)\ra =0 \text{ for all } \al\in E_8\},\\
\mathcal{E}_6 &= \{b \in  \mathcal{E}_8\mid \la b, \frac{1}2 d(\al)\ra =\la b, \mu_{1, 8} (\al)\ra =0 \text{ for all } \al\in E_8\}
\end{split}
\]
Then $\mathcal{E}_7 \cong E_7\otimes E_8$ and $\mathcal{E}_6 \cong E_6\otimes E_8$.
\end{lem}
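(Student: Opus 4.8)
**The plan is to identify $\mathcal{E}_7$ and $\mathcal{E}_6$ concretely inside the standard model of $E_8\otimes E_8$ and then exhibit explicit isometries $E_7\otimes E_8\to\mathcal{E}_7$ and $E_6\otimes E_8\to\mathcal{E}_6$, checking the claim at the level of Gram matrices exactly as in Lemma~\ref{AD}.**

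First I would observe that under the isometry $\mathcal{E}_8\cong E_8\otimes E_8$ from Lemma~\ref{E8E8}, the vector $\tfrac12 d(\al)$ corresponds (after rescaling) to $u\otimes\al$ where $u=\tfrac12(\epsilon_1+\cdots+\epsilon_8)$ is the standard glue vector of $E_8$, so the condition $\la b,\tfrac12 d(\al)\ra=0$ for all $\al\in E_8$ says precisely that, writing $b$ in the first tensor factor, the component of $b$ "along $u$'' vanishes. Concretely, recall $E_7$ is realized as the orthogonal complement in $E_8$ of a suitable vector; in the standard $D_8$-model one may take $E_7=\{x\in E_8\mid \la x,\epsilon_7+\epsilon_8\ra=0\}$ (or $\la x,u\ra=0$ after an $O(E_8)$-change of basis), and $E_6=\{x\in E_8\mid \la x,\epsilon_6+\epsilon_7\ra=\la x,\epsilon_7+\epsilon_8\ra=0\}$ (equivalently, imposing two vanishing conditions). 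So the content of the lemma is that intersecting $\mathcal{E}_8\cong E_8\otimes E_8$ with the orthogonal complement of $\{v\otimes\al\mid\al\in E_8\}$, for $v$ a fixed vector of $E_8$, produces exactly $v^{\perp}\otimes E_8$ where $v^\perp$ is taken inside $E_8^*$ — but since $\mathcal{E}_8$ is unimodular this complement is again a direct tensor factor. The key point is that the two linear functionals used to cut out $\mathcal{E}_7$ (resp.\ the four used for $\mathcal{E}_6$), namely $b\mapsto\la b,\tfrac12 d(\al)\ra$ and $b\mapsto\la b,\mu_{1,8}(\al)\ra$ for $\al$ ranging over a basis of $E_8$, correspond under the isometry to pairing the first tensor slot against $u$ and against $\epsilon_1+\epsilon_8$ respectively; since $\{u,\epsilon_1+\epsilon_8\}$ (suitably) spans the same space as the $E_7$- and $E_6$-defining vectors, the intersection is $E_7\otimes E_8$ (resp.\ $E_6\otimes E_8$).

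Thus the steps are: (i) pull back the defining conditions for $\mathcal{E}_7,\mathcal{E}_6$ through the isometry $\mathcal{E}_8\cong E_8\otimes E_8$ to rewrite them as $\la v_i,x\ra=0$ conditions on the first tensor factor; (ii) choose bases $\{\beta_1,\dots,\beta_7\}$ of $E_7\subset E_8$ and $\{\beta_1,\dots,\beta_6\}$ of $E_6\subset E_8$ compatible with these conditions, together with a basis $\{\al_1,\dots,\al_8\}$ of $E_8$; (iii) define $f:E_7\otimes E_8\to\mathcal{E}_7$ by $f(\beta_i\otimes\al_k)=$ the corresponding element of $\mathcal{E}_8$, verify that it lands in $\mathcal{E}_7$ (each $\beta_i$ is orthogonal to $v_1$), is injective, and is surjective by a rank/determinant count; (iv) check that the Gram matrix $(\la\beta_i\otimes\al_k,\beta_j\otimes\al_\ell\ra)=(\la\beta_i,\beta_j\ra\la\al_k,\al_\ell\ra)$ equals the corresponding Gram matrix in $\mathcal{E}_8$, so $f$ is an isometry; (v) repeat verbatim for $E_6$.

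The main obstacle — really the only nontrivial point — is confirming that the linear conditions defining $\mathcal{E}_7$ and $\mathcal{E}_6$ translate, under the Lemma~\ref{E8E8} isometry, to exactly the right number of independent conditions cutting $E_8$ down to $E_7$ (codimension $1$) and $E_6$ (codimension $2$) in the first tensor factor, and that the resulting complement is a full-rank sublattice isomorphic to the root lattice rather than some proper or larger lattice. This requires tracking how $d(E_8)$ and $\mu_{1,8}(E_8)$ sit relative to $\mathcal{D}_8$ and the glue vector $\tfrac12 d(E_8)$; once the images of these vectors in the model $E_8\otimes E_8$ are pinned down (they are $2(u\otimes E_8)$ and $(\epsilon_1+\epsilon_8)\otimes E_8$ up to the identification), the orthogonal complement computation is a routine Gram-matrix check, and unimodularity of $\mathcal{E}_8$ guarantees no index problems. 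I would present this as "the Gram matrices coincide, as in the proof of Lemma~\ref{AD}'' after displaying the explicit bases.
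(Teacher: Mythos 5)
Your proposal is correct and takes essentially the same route as the paper, whose proof consists of the single remark that the lemma ``can also be proved by computing their Gram matrices'' in the manner of Lemma~\ref{AD}; your plan simply fills in that computation explicitly. Your organizing observation---that under the identification of Lemma~\ref{E8E8} the defining conditions cut out the orthogonal complement of $u\otimes E_8$ (and additionally $(\epsilon_1+\epsilon_8)\otimes E_8$) with $u=\frac{1}{2}(\epsilon_1+\cdots+\epsilon_8)$ a root and $\{u,\epsilon_1+\epsilon_8\}$ spanning an $A_2$, so that the first tensor factor is cut down to $E_7$, resp.\ $E_6$---is a sound way to set up exactly the Gram-matrix check the paper has in mind.
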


\medskip

Next we fix a $2$-cocycle on the rational lattice $\frac{1}2 X$.
Let  $\{\al_1, \dots, \al_8\}$ be a basis of the root lattice $E_8$ and set
\[
x_1=\frac{\al_1}2, \ x_2=\frac{\al_2}2,\  \dots,\  x_n=\frac{\al_n}2.
\]
Then $\{x_1, \dots, x_n\}$ is a basis of $\frac{1}2 E_8$.

Notice that  $ 4 \langle x,y \rangle \in \mathbb{Z}$ for $x,y
\in \frac{1}2 E_8$.
By \cite[(5.2.14)]{FLM} and ~\cite[Remark 6.4.12]{LL}, we can define a bilinear 2-cocycle $
\varepsilon_0 : \frac{1}2 E_8 \times \frac{1}2 E_8  \to \mathbb{Z}_8  $ by defining
\begin{align}\label{epsilon0}
 \varepsilon_0( x_i, x_j) =
\begin{cases}
 4 \la x_i, x_j\ra &\mod 8 \quad \text{ if } i >j, \\
 2\la x_i, x_i\ra=1     &\mod 8 \quad \text{ if  } i =j, \\
0 &\mod 8 \quad \text{  if  } i <  j.
 \end{cases}
\end{align}
We also extend the $2$-cocycle $\varepsilon_0$ to
$(\frac{1}2E_8)^n$ by defining
$$\varepsilon_0\big((a_1,\cdots,a_n),(b_1,\cdots,b_n)\big)=\sum^n_{i=1}\varepsilon_0(a_i,b_i)$$
for $a_i,b_i\in \frac{1}2 E_8$, $i=1,\cdots,n$.

In addition, we define $\varepsilon : \frac{1}2 X \times \frac{1}2 X  \to \C^*$ by setting
\[
\varepsilon( \al, \be) := \exp( 2 \pi \sqrt{-1} \varepsilon_0( \al, \be)/8 )\quad \text{ for } \al, \be \in \frac{1}2X.
\]

\begin{lem}  Let $R$ be a  root lattice of type  $A$, $D$, or $E$.
Using the identification of $R\otimes E_8$ in Lemma \ref{AD} and Lemma \ref{E8E8},  we have
\[
\varepsilon(\al\otimes \gamma, \be \otimes \gamma) = -1
\]
for any $\al, \be \in \Phi(R)$ with $\la \al, \be\ra =\pm 1$.
\end{lem}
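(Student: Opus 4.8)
The plan is to reduce the claim to a computation with the explicit $2$-cocycle $\varepsilon_0$ defined in~\eqref{epsilon0}, using the concrete models for the root lattices $R$ and the embeddings $R\otimes E_8 \hookrightarrow \frac12 X$ established in Lemmas~\ref{AD}, \ref{E8E8} and \ref{E6E7}. First I would recall that $\varepsilon(\al\otimes\gamma,\be\otimes\gamma)=-1$ is equivalent to $\varepsilon_0(\al\otimes\gamma,\be\otimes\gamma)\equiv 4 \pmod 8$, and that since $\varepsilon_0$ is bilinear and the lattice $R\otimes E_8$ is spanned over $\Z$ by the vectors $\nu_{i,i+1}(\al_k)$ (in types $A$ and $D$, together with $\mu_{i,j}(\al_k)$ and the half-sum $\frac12 d(\al_k)$ in the $E$-cases), it suffices to check the assertion on a suitable generating set of pairs and then invoke bilinearity. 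Because $\al,\be\in\Phi(R)$ with $\la\al,\be\ra=\pm1$ are roots of $R$ (norm $2$ vectors with inner product $\pm1$), up to the Weyl group of $R$ one may assume $\al,\be$ are adjacent simple roots, e.g. $\al=\epsilon_i-\epsilon_{i+1}$, $\be=\epsilon_{i+1}-\epsilon_{i+2}$ in type $A$; the key point is that the Weyl group is realized by isometries of $R\otimes E_8$ that lift to automorphisms $\hat L$ preserving (up to coboundary) the cocycle, so the value $\varepsilon(\al\otimes\gamma,\be\otimes\gamma)$ only depends on the $W(R)$-orbit of the pair $(\al,\be)$.

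The main computation is then the following: for $\al\otimes\gamma = \nu_{i,i+1}(\gamma) = \iota_i(\gamma)-\iota_{i+1}(\gamma)$ and $\be\otimes\gamma = \nu_{i+1,i+2}(\gamma) = \iota_{i+1}(\gamma)-\iota_{i+2}(\gamma)$, I expand $\varepsilon_0$ using the direct-sum formula $\varepsilon_0((a_1,\dots,a_n),(b_1,\dots,b_n)) = \sum_k \varepsilon_0(a_k,b_k)$. Only the $(i+1)$-st coordinate contributes a cross term, and there the two entries are $-\gamma/2$ (well, $\iota_{i+1}(\gamma)$ paired against $\iota_{i+1}(-\gamma)$), so the sum reduces to $\varepsilon_0(\tfrac\gamma2\otimes\text{(copy)}, -\tfrac\gamma2)$ type contributions. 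Writing $\gamma$ in the basis $\{\al_1,\dots,\al_8\}$ and using the bilinear formula~\eqref{epsilon0} — in particular $\varepsilon_0(x_i,x_i)=1$, $\varepsilon_0(x_i,x_j)=4\la x_i,x_j\ra$ for $i>j$, and $0$ for $i<j$ — one computes $\varepsilon_0(\tfrac\gamma2,\tfrac\gamma2)\bmod 8$ and relates it to $4\la\gamma,\gamma\ra$-type quantities. The upshot should be that the single surviving cross term equals, modulo $8$, the quantity $-4\la\iota_{i+1}(\gamma),\iota_{i+1}(\gamma)\ra = -4\la\gamma,\gamma\ra$, which is $\equiv 4\pmod 8$ precisely because $\la\gamma,\gamma\ra$ is an even integer not divisible by $4$... more carefully: since $\gamma\in E_8$ we only know $\la\gamma,\gamma\ra$ is even, so one must track the parity of $\tfrac12\la\gamma,\gamma\ra$, and here the statement is claimed for \emph{all} $\gamma$, which forces the cross term to be governed by $\la\al,\be\ra = \pm 1$ rather than by $\gamma$. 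I would therefore re-bracket the expansion so that the $\gamma$-dependence appears as $\la\gamma,\gamma\ra$ multiplied by the integer $\la\al,\be\ra$, and observe $4\cdot(\pm1)\cdot\la\gamma,\gamma\ra \equiv 4\la\gamma,\gamma\ra \pmod 8$; combined with $\la\gamma,\gamma\ra$ even this is $\equiv 0$, so this cannot be the whole story — the correct source of the sign $-1$ must be the \emph{self-interaction} diagonal terms $\varepsilon_0(x_i,x_i)=1$ summing over the support of $\gamma$, i.e. $\varepsilon_0(\al\otimes\gamma,\be\otimes\gamma)$ picks up $\sum$ of odd contributions whose total parity equals $\la\al,\be\ra\cdot(\text{something odd in }\gamma)$.

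Concretely, the clean way to organize this is: by bilinearity in $\gamma$ it is enough to treat $\gamma = \al_k$ a basis vector and then $\gamma = \al_k+\al_\ell$; for $\gamma=\al_k$ one is computing $\varepsilon_0$ of two vectors supported on the coordinates $i,i+1$ (resp. $i+1,i+2$) with entries $\pm x_k$, and the formula~\eqref{epsilon0} gives a value congruent to $4\la x_k,x_k\ra \equiv 4\cdot\tfrac{\la\al_k,\al_k\ra}{4}$... I will simply expand all terms, collect the coefficient of the unique overlapping coordinate $i+1$, and verify the residue is $4 \bmod 8$ in each of finitely many cases ($\gamma$ ranging over the $8$ basis vectors $\al_k$ and the $\binom82$ sums $\al_k+\al_\ell$), then extend by bilinearity; the $D_n$ case is identical using $\mu_{i,j}$ in place of or alongside $\nu_{i,j}$, and the $E_6,E_7,E_8$ cases follow because their roots lie inside $\mathcal D_n\otimes$-type sublattices (Lemma~\ref{E6E7}) so no new pairs arise — the half-sum generator $\frac12 d(\gamma)$ is never a root of $R$ and need not be checked.

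\textbf{Main obstacle.} The delicate point is bookkeeping the $2$-cocycle correctly: $\varepsilon_0$ is defined by an \emph{asymmetric} case-split ($i>j$ vs $i<j$ vs $i=j$) on the fixed basis $\{x_1,\dots,x_n\}$ of $\frac12 E_8$, so when I substitute a general $\gamma=\sum c_k\al_k$ and expand $\varepsilon_0(\tfrac\gamma2, \pm\tfrac\gamma2)$ the off-diagonal terms do \emph{not} symmetrize trivially, and I must carefully combine $\varepsilon_0(x_k,x_\ell)$ with $\varepsilon_0(x_\ell,x_k)$ to recover something expressible via the symmetric form $\la\gamma,\gamma\ra$. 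Getting this reduction right — showing that the antisymmetric part contributes an even multiple of $4$ and hence drops out mod $8$, while the diagonal part contributes exactly the sign claimed, \emph{uniformly in} $\gamma$ and in the root pair $(\al,\be)$ — is where the real work lies; the rest is the routine finite verification on generators described above. It is plausible the authors instead argue more structurally by identifying $\varepsilon$ restricted to $\al\otimes E_8 + \be\otimes E_8$ with the standard cocycle of the root lattice $A_2\otimes E_8$ (cf. Lemma~\ref{gl3c}), in which case the sign $-1$ is exactly the commutator-map value $(-1)^{\la\al\otimes\gamma,\ \be\otimes\gamma\ra}$ evaluated on two roots of $A_2$ tensored with the same $\gamma$ — but since our $\varepsilon_0$ is $\Z_8$-valued and not the usual $\Z_2$ commutator cocycle, I would still need the explicit mod-$8$ computation to certify consistency.
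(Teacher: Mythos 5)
Your outline never reaches the one computational fact the lemma actually rests on, and at the crucial spot it records a wrong value. By bilinearity of $\varepsilon_0$ and the basis values in \eqref{epsilon0} (diagonal entries $1$, strictly lower--triangular entries $4\la x_i,x_j\ra$ with $x_i=\al_i/2$), one has $\varepsilon_0(\gamma,\gamma)\equiv 2\la \gamma,\gamma\ra \pmod 8$ for $\gamma\in E_8$. In the $A$/$D$ case the vectors $\al\otimes\gamma$ and $\be\otimes\gamma$ overlap in exactly one coordinate of $X$, where the entries are $\pm\gamma$, so $\varepsilon_0(\al\otimes\gamma,\be\otimes\gamma)\equiv \pm 2\la\gamma,\gamma\ra\equiv 4\pmod 8$ because $\gamma$ is a \emph{root} of $E_8$ (this is how the lemma is used in Lemma \ref{eprod}: the sum runs over $\gamma\in\Phi^+(E_8)$). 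Your expansion instead lands on ``$-4\la\gamma,\gamma\ra$'', concludes the cross term vanishes mod $8$, decides ``this cannot be the whole story'', and retreats to a vague parity heuristic about diagonal terms; the correct bookkeeping is precisely the diagonal-plus-lower-triangular combination above, which you never pin down. Your unease about ``all $\gamma$'' is the symptom of a real issue you then resolve incorrectly: the claimed sign is \emph{false} for general $\gamma\in E_8$ (one gets $\varepsilon_0\equiv 2\la\al,\be\ra\la\gamma,\gamma\ra$, which is $0$ mod $8$ whenever $\la\gamma,\gamma\ra\equiv 0\pmod 4$), so the plan to verify $\gamma=\al_k$ and $\gamma=\al_k+\al_\ell$ and ``extend by bilinearity'' cannot work: the expression is quadratic, not linear, in $\gamma$, and for non-adjacent $k,\ell$ the vector $\al_k+\al_\ell$ has norm $4$ and yields $+1$, not $-1$. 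The lemma must be read, and proved, with $\gamma$ a root.

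The $E$-case reduction is also wrong. You dismiss it on the grounds that ``the half-sum generator $\frac12 d(\gamma)$ is never a root of $R$ and need not be checked,'' but the pairs $(\al,\be)$ range over roots of $R=E_6,E_7,E_8$, and these root systems contain roots with half-integer coordinates (e.g.\ $\frac12(\epsilon_1+\cdots+\epsilon_8)$ and its images); for such $\al$ the vector $\al\otimes\gamma$ lies in the coset involving $\frac12 d(E_8)$ and is not supported on the $\mathcal D$-type sublattice, so genuinely new pairs do arise. The paper handles them in one line: writing $\al=\sum a_i\epsilon_i$, $\be=\sum b_i\epsilon_i$ with $a_i,b_i\in\frac12\Z$ and using coordinatewise bilinearity gives $\varepsilon_0(\al\otimes\gamma,\be\otimes\gamma)\equiv 4\la\al,\be\ra\equiv 4\pmod 8$ for $\gamma$ a root --- an easy step, but one your outline explicitly excludes. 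Finally, the proposed reduction to adjacent simple roots via ``the value depends only on the $W(R)$-orbit of $(\al,\be)$'' is unjustified: cocycle values, unlike the commutator map, are not isometry-invariant (they change by a coboundary), and fortunately no such reduction is needed, since the direct computation above works verbatim for every pair with $\la\al,\be\ra=\pm1$.
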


\begin{proof}
We continue to use the standard model for root lattices of type $A$, $D$, and $E$.
For $R=A_{n-1}$ or $D_n$,  $\la \al, \be\ra =\pm 1$ if and only if  $\al=\epsilon_i\pm \epsilon_j$ and $\be=\pm (\epsilon_j\pm \epsilon_k)$ for some $i,j,k$. Thus, by the identification in Lemma \ref{AD}, we have
\[
\varepsilon(\al\otimes \gamma, \be \otimes \gamma) = \varepsilon( \pm  \iota_j(\gamma), \pm  \iota_j(\gamma) ) =-1.
\]

For $R=E_8, E_7,E_6$, let $\al=\sum_{i=1}^8 a_i \epsilon_i$ and $\be=\sum_{i=1}^8 b_i \epsilon_i$ for $a_i, b_i\in \frac{1}2\Z$. Then
 \[
\varepsilon_0(\al\otimes \gamma, \be \otimes \gamma) =\sum_{i=1}^8 a_ib_i \varepsilon_0( \pm  \iota_i(\gamma), \pm  \iota_i(\gamma) ) = 4 \la \al, \be\ra =\pm 4 .
\]
Hence, $ \varepsilon(\al\otimes \gamma, \be \otimes \gamma)=-1$.
\end{proof}

\begin{nota}
Let $R$ be a root lattice. We use $\Phi(R)$ and $\Phi^+(R)$ to denote the set of all roots and positive roots of $R$, respectively.
\end{nota}

\begin{nota}
For any $\al\in \Phi(R)$, set
\[
M_\al :=\Z\al\otimes E_8 \subset R\otimes E_8.
\]
Then $M_\al\cong \sqrt{2}E_8$. We also define
\[
e(\al) :=e_{M_\al}=\frac{1}{16}\omega_{M_\al} + \frac{1}{32} \sum_{\be\in \Phi^+(E_8)} (e^{\al\otimes \be }+ e^{-\al\otimes \be })
\]
to be the standard Ising vector associated to $M_\al$.  Note that $M_{\al}=M_{-\al}$ and $e(\al) =e(-\al)$.
\end{nota}

\begin{rem}
Again using the identification in Lemmas \ref{AD} and \ref{E8E8}, the 2-cocyle $\varepsilon_0$ defined in Equation \eqref{epsilon0} is trivial on $M_\al$ for any root $\al \in \Phi(R)$
\end{rem}

\begin{lem}
For any $\al, \be\in \Phi^+(R)$ with $\al\neq \be$,  we have
\[
M_\al+M_\be \cong
\begin{cases}
A_2\otimes E_8, & \text{ if } \la \al, \be \ra =\pm 1, \\
\sqrt{2}E_8 \perp \sqrt{2}E_8, & \text{ otherwise } .
\end{cases}
\]
\end{lem}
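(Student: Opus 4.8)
The plan is to reduce the statement to an elementary computation with rank-$2$ sublattices of $R$. First I would record the only input from the theory of root systems that is needed: if $\al\neq\be$ are distinct positive roots of a root system of type $A$, $D$, or $E$, then $\la\al,\be\ra\in\{-1,0,1\}$. Indeed $\la\al,\be\ra\in\Z$ since root lattices are integral, $|\la\al,\be\ra|\le\sqrt{\la\al,\al\ra}\,\sqrt{\la\be,\be\ra}=2$ by Cauchy--Schwarz, and $\la\al,\be\ra=\pm2$ would force $\be=\pm\al$ by the equality case of Cauchy--Schwarz, which is impossible for two distinct positive roots.

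Next I would split into the two cases of the statement. If $\la\al,\be\ra=0$, then $M_\al=\Z\al\otimes E_8$ and $M_\be=\Z\be\otimes E_8$ are orthogonal sublattices of $R\otimes E_8$, so $M_\al+M_\be=M_\al\perp M_\be$; since $\la\al,\al\ra=\la\be,\be\ra=2$, each summand is isometric to $\sqrt{2}E_8$, giving $M_\al+M_\be\cong\sqrt{2}E_8\perp\sqrt{2}E_8$. If $\la\al,\be\ra=\pm1$, then, using $M_\be=M_{-\be}$, I may replace $\be$ by $-\be$ and assume $\la\al,\be\ra=-1$. Now $\al$ and $\be$ are linearly independent, so $S:=\Z\al+\Z\be$ is a rank-$2$ sublattice of $R$ with Gram matrix $\left(\begin{smallmatrix}2&-1\\-1&2\end{smallmatrix}\right)$ in the basis $\{\al,\be\}$; hence $S\cong A_2$. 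Since $E_8$ is a free $\Z$-module, $M_\al+M_\be=(\Z\al\otimes E_8)+(\Z\be\otimes E_8)=(\Z\al+\Z\be)\otimes E_8=S\otimes E_8$, and the inner product induced from $R\otimes E_8$ on this sublattice is precisely the tensor-product form; therefore $M_\al+M_\be\cong A_2\otimes E_8$. This is of course consistent with Lemma \ref{gl3c}.

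The argument is routine, and the only point I would be careful about is the identification $M_\al+M_\be=(\Z\al+\Z\be)\otimes E_8$ as sublattices of $R\otimes E_8$: one should check that summing the two submodules $\Z\al\otimes E_8$ and $\Z\be\otimes E_8$ produces $(\Z\al+\Z\be)\otimes E_8$ with no collapse of the bilinear form. Concretely, every element of $M_\al$ has the form $\al\otimes v$ with $v\in E_8$ and every element of $M_\be$ has the form $\be\otimes w$ with $w\in E_8$, so $M_\al+M_\be=\{\al\otimes v+\be\otimes w\mid v,w\in E_8\}$, which is exactly $S\otimes E_8$ because $\{\al,\be\}$ is a $\Z$-basis of $S$. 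Hence I do not anticipate a genuine obstacle; the whole proof can alternatively be phrased by exhibiting explicit Gram matrices, exactly in the spirit of the proofs of Lemmas \ref{AD}--\ref{E6E7}.
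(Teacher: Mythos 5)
Your proof is correct, but in the key case $\la \al,\be\ra=\pm 1$ it follows a genuinely different route from the paper. The paper observes that $M_\al\cap M_\be=0$ and that the SSD involutions satisfy $|t_{M_\al}t_{M_\be}|=3$, and then invokes Lemma \ref{gl3c} (quoted from Griess--Lam) to conclude $M_\al+M_\be\cong A_2\otimes E_8$; the orthogonal case is handled exactly as you do. You instead replace $\be$ by $-\be$ if necessary (legitimate since $M_\be=M_{-\be}$), note that $S=\Z\al+\Z\be\subset R$ has Gram matrix $\left(\begin{smallmatrix}2&-1\\-1&2\end{smallmatrix}\right)$, hence $S\cong A_2$, and identify $M_\al+M_\be$ with the image of $S\otimes E_8$ in $R\otimes E_8$ (injective because $E_8$ is free, and the restricted form agrees with the tensor-product form on pure tensors, hence everywhere by bilinearity); your careful remark that every element of $M_\al$ is of the form $\al\otimes v$ is exactly the point that makes the identification work. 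Your argument is more elementary and self-contained: it removes the dependence on the cited lattice-theoretic lemma, whose generality (arbitrary $\sqrt{2}E_8$-pairs in a rootless even lattice with product of SSD involutions of order $3$, not only pairs already in tensor-product position) is not needed here, and it is in the same Gram-matrix spirit as the proofs of Lemmas \ref{AD}--\ref{E6E7}. What the paper's route buys is brevity on the page and the explicit order-$3$ relation between $t_{M_\al}$ and $t_{M_\be}$, which resonates with the later analysis of the Miyamoto involutions; as a proof of this lemma alone, your direct identification is a valid and arguably cleaner alternative.
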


\begin{proof}
Since $\al\neq\be $ and $\al, \be$ are roots, we have $\la \al, \be\ra =\pm 1$ or $0$.

If $\la \al, \be \ra =\pm 1$, then $M_\al\cap M_\be=0$ and $t_{M_\al}t_{M_\be}$ has order 3. Hence $M_\al+M_\be \cong
A_2\otimes E_8$  by Lemma \ref{gl3c}.

If $\la \al, \be \ra =0$, then $\la M_\al, M_\be\ra =0 $ and we have $M_\al+M_\be\cong \sqrt{2}E_8 \perp \sqrt{2}E_8 $ as
desired.
\end{proof}

\medskip

\begin{lem}\label{eprod}
Let $\al, \beta$ be roots of $R$. Then we have
\[
e(\al)_1 e(\be)  = \frac{1}{32}\left(e(\al)+ e(\be) - e(\al+\be)\right)
\]
if $\la \al, \be\ra =-1$.
\end{lem}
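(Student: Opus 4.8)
The plan is to reduce the identity to the known $3C$-algebra relations from Lemma \ref{eiej}. If $\la \al,\be\ra = -1$, then by the previous lemma $M_\al + M_\be \cong A_2\otimes E_8$, and $M_\al \cap M_\be = 0$ with $t_{M_\al}t_{M_\be}$ of order $3$. I would first show that the subVOA $\mathrm{VOA}(e(\al),e(\be))$ generated by the two standard Ising vectors $e(\al),e(\be)$ inside $V_{M_\al+M_\be}^+ \subset V_{R\otimes E_8}^+$ is isomorphic to $U_{3C}$. This should follow by combining Proposition \ref{taue=tM}, which identifies the Miyamoto involutions $\tau_{e(\al)}, \tau_{e(\be)}$ with the SSD involutions $t_{M_\al}, t_{M_\be}$ up to the kernel $\mathrm{Hom}(L,\Z_2)$, with the fact that $t_{M_\al}t_{M_\be}$ has order $3$; together with the classification of VOAs generated by two Ising vectors (the nine cases $U_{nX}$) this forces $\mathrm{VOA}(e(\al),e(\be)) \cong U_{3C}$ — it is the only one among the nine whose associated group is $S_3$ and whose two Ising vectors have inner product $1/2^8$, which matches the Gram computation: $\la e(\al),e(\be)\ra$ equals the value appearing in Lemma \ref{eiej}(3). (Alternatively, one can appeal directly to the computation already performed in \cite{LYY2}/\cite{GL} that the Ising vectors attached to the two $\sqrt2 E_8$-sublattices of $A_2\otimes E_8$ generate a copy of $U_{3C}$.)

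Once $\mathrm{VOA}(e(\al),e(\be)) \cong U_{3C}$ is established, Lemma \ref{eiej}(3) gives
\[
e(\al)_1 e(\be) = \tfrac{1}{32}\bigl(e(\al) + e(\be) - e^2\bigr),
\]
where $e^2$ is the unique third Ising vector in the weight-$2$ space of this $U_{3C}$, determined by the requirement that $\{e(\al),e(\be),e^2\}$ is permuted transitively by the $S_3$ generated by $\tau_{e(\al)},\tau_{e(\be)}$. So the remaining task is to identify $e^2$ with $e(\al+\be)$. Here I would note that $\al+\be$ is again a root of $R$ (since $\la\al,\be\ra = -1$), so $e(\al+\be) = e_{M_{\al+\be}}$ is defined and is an Ising vector lying in $V_{M_\al+M_\be}^+$, because $M_{\al+\be} = \Z(\al+\be)\otimes E_8 \subset M_\al + M_\be$. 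To pin down that $e^2 = e(\al+\be)$ it suffices to check that $\tau_{e(\al)}$ swaps $e(\be)$ and $e(\al+\be)$ (equivalently, by Proposition \ref{taue=tM} and the structure of $O(M_\al+M_\be)$, that $t_{M_\al}$ maps $M_\be$ to $M_{\al+\be}$): indeed $t_{M_\al}$ acts on the $A_2$-factor as the reflection $s_\al$, and $s_\al(\be) = \be - \la\be,\al^\vee\ra\al = \be + \al = \al+\be$, hence $t_{M_\al}(M_\be) = M_{s_\al\be} = M_{\al+\be}$. Since $t_{M_\al}$ fixes $e(\al)$ and exchanges $e(\be)$ with $e(\al+\be)$, the third Ising vector $e^2$ in $U_{3C}$ — the one distinct from $e(\al)$ and exchanged with $e(\be)$ under $\tau_{e(\al)}$ — must equal $e(\al+\be)$.

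The main obstacle is the identification $\mathrm{VOA}(e(\al),e(\be))\cong U_{3C}$: one must rule out the a priori possibility that the pair $(e(\al),e(\be))$ sits in a $U_{3A}$ configuration (the other case with $|\tau_e\tau_f| = 3$), but $U_{3A}$ has $\la e,f\ra = 13/2^{10}$ while here a direct Gram-matrix computation inside $A_2\otimes E_8$ gives $\la e(\al),e(\be)\ra = 1/2^8$, matching only $U_{3C}$; this needs the explicit formula for inner products of $E_8$-type Ising vectors (e.g.\ from \cite{dlmn} or \cite{GL}) and a short count of how many $\be$ with $\al\otimes\be \in M_\al \cap M_\be$-type overlaps occur — in fact $M_\al \cap M_\be = 0$, so the cross terms vanish and the inner product is purely the $\omega$-$\omega$ contribution, which is $\tfrac{1}{256}\la\al,\be\ra^2_{\text{suitably normalized}}$; care with normalizations of $\omega_{M_\al}$ versus $\omega_{M_\al+M_\be}$ is where errors are most likely to creep in. Everything else is then a direct appeal to Lemma \ref{eiej} and the reflection identity in the root system $A_2$.
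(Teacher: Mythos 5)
Your route is genuinely different from the paper's: the paper proves the identity by a direct computation inside $V_{R\otimes E_8}$, using the explicit FLM product formulas together with the $2$-cocycle fixed in Section 3, whose key property (proved there) is $\varepsilon(\al\otimes\gamma,\be\otimes\gamma)=-1$ whenever $\la\al,\be\ra=\pm1$. Your first step --- identifying $\mathrm{VOA}(e(\al),e(\be))$ with $U_{3C}$ from the Gram value $\la e(\al),e(\be)\ra=1/2^8$ and Sakuma's classification --- is sound and non-circular, since the inner-product computation (Lemma \ref{inner}) does not rely on Lemma \ref{eprod}; it yields $e(\al)_1e(\be)=\frac1{32}\bigl(e(\al)+e(\be)-g\bigr)$ with $g$ the third Ising vector of that $3C$-algebra.

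The genuine gap is in the final identification $g=e(\al+\be)$. Proposition \ref{taue=tM} only determines $\tau_{e(\al)}$ as a lift of $t_{M_\al}$ modulo $\mathrm{Hom}(L,\Z_2)$, and even for an actual lift of $t_{M_\al}$ to $\aut\hat L$ the action on the vectors $e^{\be\otimes\gamma}$ carries cocycle-dependent signs. So your argument shows only that $\tau_{e(\al)}(e(\be))=\varphi_x(e_{M_{\al+\be}})$ for some $x\in\frac12 M_{\al+\be}$, i.e.\ $g$ is one of the $2^8$ sign-twisted Ising vectors attached to the sublattice $M_{\al+\be}$, not necessarily the standard one $e(\al+\be)$ with all plus signs. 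Inner products cannot separate these candidates: since $M_\al\cap M_{\al+\be}=0$, the exponential parts pair to zero, so $\la\varphi_x(e_{M_{\al+\be}}),e(\al)\ra$ and $\la\varphi_x(e_{M_{\al+\be}}),e(\be)\ra$ are independent of $x$. Note also that the exact statement $\tau_{e(\al)}(e(\be))=e(\al+\be)$ is the paper's Lemma \ref{transeij}, which is deduced \emph{from} Lemma \ref{eprod}, so it cannot be invoked here without circularity. To close the gap you must verify at the cocycle level that the twist is trivial --- e.g.\ by checking $\varepsilon(\al\otimes\gamma,\be\otimes\gamma)=-1$ for the cocycle of Section 3 and tracking the signs in $(e^{\al\otimes\gamma}+e^{-\al\otimes\gamma})_1(e^{\be\otimes\gamma}+e^{-\be\otimes\gamma})$ --- at which point you are essentially carrying out the paper's direct computation.
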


\begin{proof}
Recall from \cite[(8.9.55)]{FLM} that
\[
\left(h(-1)^2\cdot\mathbbm{1}\right)_1 \left(g(-1)^2\cdot\mathbbm{1}\right)
= 4\la h,g\ra h(-1) g(-1)\cdot\mathbbm{1}
\]
and
\[
(e^a+e^{-a})_1 (e^{b}+e^{-b}) =
\begin{cases}
\varepsilon(a, b) (e^{a\pm b} + e^{-(a\pm b)})& \text{ if } \la a, b\ra =\mp 2, \\
a(-1)^2\cdot \mathbbm{1} &\text{ if } \la a, b\ra =\pm 4,\\
0 & \text{ otherwise},
\end{cases}
\]
for any norm $4$ vectors $a,b$ and for any $g,h\in \mathfrak{h}$.

Let $\{h_1, \dots, h_8\}$ be an orthonormal basis of $\Q\otimes M_\al$ and  let $h_i'=t_{M_\al}t_{M_\be} h_i$. Then
$\{h_1', \dots, h_8'\}$ is an orthonormal basis of $\Q\otimes M_\be$ and we have
\[
\la h_i, h_j' \ra =-\frac{1}2\la h_i,h_j\ra =-\frac{1}2 \delta_{i,j}.
\]
In addition, $\omega_{M_\al} = \frac{1}2 \sum h_i(-1)^2\cdot \mathbbm{1}$,  $\omega_{M_\be} = \frac{1}2 \sum h_i'(-1)^2\cdot \mathbbm{1}$ and
\[
(\omega_{M_\al})_1(\omega_{M_\be}) = \frac{1}4 \sum_{i=1}^8 4\cdot (-\frac{1}2) h_i(-1)h_i'(-1)\cdot \mathbbm{1}
= - \frac{1}2 \sum_{i=1}^8 h_i(-1)h_i'(-1)\cdot \mathbbm{1}.
\]
Thus,
\[
\begin{split}
e(\al)_1e(\be) &= \frac{1}{2^{10}} \left( 2\omega_{M_\al} +\sum_{\gamma\in \Phi^+(E_8)} (e^{\al\otimes \gamma} + e^{-\al\otimes \gamma} )\right)_1
\left( 2\omega_{M_\be} +\sum_{\gamma\in \Phi^+(E_8)} (e^{\be\otimes \gamma} + e^{-\be\otimes \gamma} )\right)\\
& = \frac{1}{2^{10}} \left[ - 2\sum_{i=1}^8 h_i(-1)h_i'(-1)\cdot \mathbbm{1} + \sum_{\gamma\in \Phi^+(E_8)} (e^{\al\otimes \gamma} + e^{-\al\otimes \gamma} ) + \sum_{\gamma\in \Phi^+(E_8)} (e^{\be\otimes \gamma} + e^{-\be\otimes \gamma} )\right.\\
& \quad  \left. + \left(\sum_{\gamma\in \Phi^+(E_8)} (e^{\al\otimes \gamma} + e^{-\al\otimes \gamma} )\right)_1
\left( \sum_{\gamma\in \Phi^+(E_8)} (e^{\be\otimes \gamma} + e^{-\be\otimes \gamma} )\right) \right ]\\
& = \frac{1}{2^{10}} \left[ - 2\sum_{i=1}^8 h_i(-1)h_i'(-1)\cdot \mathbbm{1} + \sum_{\gamma\in \Phi^+(E_8)} (e^{\al\otimes \gamma} + e^{-\al\otimes \gamma} ) + \sum_{\gamma\in \Phi^+(E_8)} (e^{\be\otimes \gamma} + e^{-\be\otimes \gamma} )\right.\\
& \quad  \left. + \sum_{\gamma\in \Phi^+(E_8)} \varepsilon(
\al\otimes \gamma, \be\otimes \gamma) (e^{(\al+\be)\otimes \gamma} + e^{-(\al+\be)\otimes \gamma} ) \right ]\\
&= \frac{1}{32}\left(e(\al)+ e(\be) - e(\al+\be)\right).
\end{split}
\]
Note that $-2h_i(-1)h_i'(-1)\cdot \mathbbm{1} =[  h_i(-1)^2 + h_i'(-1)^2 -(h_i+h_i')(-1)^2 ]\cdot \mathbbm{1}.$
\end{proof}

%\vspace{6cm}

\medskip

    \begin{lem}\label{inner}
        For any $\al, \be \in \Phi^+(R)$, we have
        \begin{eqnarray*}
            \langle e(\al), e(\be)\rangle=\begin{cases}
                \frac{1}{4}&\text{ if } \al=\be,\\
                \frac{1}{2^8}&\textrm{ if } \la \al, \be \ra =\pm 1,\\
                0&\textrm{ if } \la \al, \be \ra =0.\\
            \end{cases}
        \end{eqnarray*}
    \end{lem}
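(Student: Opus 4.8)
The plan is a direct computation using the invariant bilinear form on $V_{R\tensor E_8}$, normalized by $\la\mathbbm{1},\mathbbm{1}\ra=1$, together with two standard facts about it: (i) it pairs $M(1)\tensor e^\mu$ with $M(1)\tensor e^\nu$ to zero unless $\mu+\nu=0$; and (ii) $\la g(-1)^2\mathbbm{1},g'(-1)^2\mathbbm{1}\ra=2\la g,g'\ra^2$ for $g,g'\in\mathfrak h=\Q\tensor(R\tensor E_8)$. I would first dispose of the case $\al=\be$: since $e(\al)$ is an Ising vector and $\la\omega',\omega'\ra=c/2$ for any conformal vector of central charge $c$, we get $\la e(\al),e(\al)\ra=1/4$ with no further work.

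For $\al\neq\be$, write $e(\al)=\tfrac1{16}\omega_{M_\al}+\tfrac1{32}A_\al$ with $A_\al=\sum_{\gamma\in\Phi^+(E_8)}(e^{\al\tensor\gamma}+e^{-\al\tensor\gamma})$, and expand $\la e(\al),e(\be)\ra$ bilinearly into four terms. The mixed terms $\la\omega_{M_\al},A_\be\ra$ and $\la A_\al,\omega_{M_\be}\ra$ vanish by (i), since $\omega_{M_\al}$ sits in $M(1)\tensor e^0$ while $A_\be$ is supported on $M(1)\tensor e^{\pm\be\tensor\gamma}$ with $\be\tensor\gamma\neq0$. For $\la A_\al,A_\be\ra$, note that $\al\neq\pm\be$ are roots with $\la\al,\be\ra\in\{0,\pm1\}$, hence linearly independent in $\Q\tensor R$; therefore no equality $\al\tensor\gamma=\pm\be\tensor\delta$ with $\gamma,\delta\in\Phi^+(E_8)$ can occur, so every summand $\la e^{\pm\al\tensor\gamma},e^{\pm\be\tensor\delta}\ra$ is zero by (i) and $\la A_\al,A_\be\ra=0$. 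This leaves $\la e(\al),e(\be)\ra=\tfrac1{256}\la\omega_{M_\al},\omega_{M_\be}\ra$.

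To evaluate $\la\omega_{M_\al},\omega_{M_\be}\ra$ I would choose orthonormal bases of $\Q\tensor M_\al$ and $\Q\tensor M_\be$ in parallel: fix an orthonormal basis $\{f_1,\dots,f_8\}$ of $\Q\tensor E_8$ and set $h_k=\tfrac1{\sqrt2}\,\al\tensor f_k$ and $h'_l=\tfrac1{\sqrt2}\,\be\tensor f_l$, which are orthonormal because $M_\al=\Z\al\tensor E_8$ and $\la\al,\al\ra=\la\be,\be\ra=2$. Then $\omega_{M_\al}=\tfrac12\sum_k h_k(-1)^2\mathbbm{1}$, $\omega_{M_\be}=\tfrac12\sum_l h'_l(-1)^2\mathbbm{1}$, and $\la h_k,h'_l\ra=\tfrac12\la\al,\be\ra\,\delta_{kl}$, so by (ii)
\[
\la\omega_{M_\al},\omega_{M_\be}\ra=\tfrac14\sum_{k,l}2\la h_k,h'_l\ra^2=\tfrac12\sum_{k=1}^{8}\Bigl(\tfrac12\la\al,\be\ra\Bigr)^2=\la\al,\be\ra^2 .
\]
Hence $\la e(\al),e(\be)\ra=\tfrac1{256}\la\al,\be\ra^2$, which equals $1/2^8$ when $\la\al,\be\ra=\pm1$ and $0$ when $\la\al,\be\ra=0$, as claimed. (When $\la\al,\be\ra=0$ the vanishing is also transparent from $M_\al\perp M_\be$: there $e(\al)$ and $e(\be)$ lie in mutually orthogonal lattice subVOAs.)

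The computation is essentially routine; the one step that needs a little care is the evaluation of $\la\omega_{M_\al},\omega_{M_\be}\ra$, namely passing the two orthonormal bases through a single orthonormal basis of $\Q\tensor E_8$ so that their cross-Gram matrix is exactly $\tfrac12\la\al,\be\ra$ times the identity. As an alternative for the case $\la\al,\be\ra=-1$, one may instead pair the identity of Lemma~\ref{eprod} with $e(\al)$, using that $e(\al)_1$ is self-adjoint for the invariant form (as $e(\al)$ is quasi-primary of weight $2$) and $e(\al)_1e(\al)=2e(\al)$; combined with the $S_3$-symmetry of the triple $\{M_\al,M_\be,M_{\al+\be}\}$ this again forces $\la e(\al),e(\be)\ra=1/2^8$. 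I find the direct computation cleaner and entirely self-contained.
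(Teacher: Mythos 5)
Your proposal is correct and follows essentially the same route as the paper: dispose of $\al=\be$ by the Ising normalization, kill the exponential and mixed terms using $M_\al\cap M_\be=0$ (equivalently the linear independence of $\al,\be$), and reduce to $\la e(\al),e(\be)\ra=\tfrac{1}{256}\la\omega_{M_\al},\omega_{M_\be}\ra$, evaluated via the cross-Gram matrix of orthonormal bases of $\Q\otimes M_\al$ and $\Q\otimes M_\be$. The only cosmetic difference is that you take the bases $\tfrac{1}{\sqrt2}\,\al\otimes f_k$, $\tfrac{1}{\sqrt2}\,\be\otimes f_k$ directly, yielding the uniform formula $\la e(\al),e(\be)\ra=\tfrac{1}{256}\la\al,\be\ra^2$, whereas the paper uses $h_i'=t_{M_\al}t_{M_\be}h_i$ as in its Lemma~\ref{eprod}; both give $\la\omega_{M_\al},\omega_{M_\be}\ra=1$ in the $\la\al,\be\ra=\pm1$ case.
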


    \begin{proof}
   First we note that $\la \al, \be \ra =0, \pm 1$, or $2$ for any $\al, \be\in \Phi^+(R)$.

If $\la \al, \be \ra =2$, then $\al=\be$ and $e(\al)=e(\be)$. Thus, $ \langle e(\al), e(\be)\rangle=1/4$.

If $\la \al, \be \ra =0$, then $M_\al\perp M_\be$ and hence $\la e(\al), e(\be) \ra =0$.

If $\la \al, \be \ra =\pm 1$, then $M_\al\cap M_\be =0$ and $M_\al+M_\be\cong A_2\otimes E_8$. In this case, we have
\[
 \langle e(\al), e(\be)\rangle= \left(\frac{1}{16}\right)^2 \la \omega_{M_\al} , \omega_{M_\be}\ra.
\]
As in the proof Lemma \ref{eprod}, let $\{h_1, \dots, h_8\}$ be an orthonormal basis of $\Q\otimes M_\al$ and  let $h_i'=t_{M_\al}t_{M_\be} h_i$. Then we have
$\la h_i, h_j' \ra =-\frac{1}2 \delta_{i,j}$, $\omega_{M_\al} = \frac{1}2 \sum h_i(-1)^2\cdot \mathbbm{1}$ and   $\omega_{M_\be} = \frac{1}2 \sum h_i'(-1)^2\cdot \mathbbm{1}$. Thus,
\[
\la \omega_{M_\al}, \omega_{M_\be}\ra = \frac{1}4 \sum_{i=1}^8 2\la h_i, h_i'\ra ^2 = \frac{1}4 \cdot 8\cdot 2\cdot \left(-\frac{1}{2}\right)^2=1.
\]
Hence, $\langle e(\al), e(\be)\rangle= 1/{2^8}$ as desired.
    \end{proof}

    \begin{cor}\label{2b3ccon}
       Let $\al, \be \in \Phi^+(R)$ with $\al \neq \be$. Then  the subVOA  generated by $e(\al)$ and $e(\be)$ is
        \[
          \mathrm{VOA}(e(\al),e(\be))
\cong
            \begin{cases}
                U_{3C}&\textrm{ if }  \la \al, \be \ra =\pm 1,  \\
                U_{2B}&\textrm{ if }  \la \al, \be \ra =0.
            \end{cases}
        \]
    \end{cor}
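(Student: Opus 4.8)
The plan is to read off the Griess-algebra data of the pair $e(\al),e(\be)$ from the two preceding lemmas and then match it against the classification of VOAs generated by two Ising vectors.

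The case $\la\al,\be\ra=0$ is immediate: by Lemma~\ref{inner} we have $\la e(\al),e(\be)\ra=0$, so by the $2B$-algebra lemma recalled from \cite{LYY2}, $\mathrm{VOA}(e(\al),e(\be))\cong L(\tfrac12,0)\otimes L(\tfrac12,0)=U_{2B}$.

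For the case $\la\al,\be\ra=\pm1$, I would first replace $\be$ by $-\be$ when $\la\al,\be\ra=1$; this changes neither $e(\be)=e(-\be)$ nor the generated subVOA, so we may assume $\la\al,\be\ra=-1$. Then $\gamma:=\al+\be\in\Phi(R)$, and Lemma~\ref{eprod} gives $e(\al)_1 e(\be)=\tfrac1{32}\big(e(\al)+e(\be)-e(\gamma)\big)$, while $\la\al,\gamma\ra=\la\be,\gamma\ra=1$ so Lemma~\ref{inner} yields $\la e(\al),e(\be)\ra=\la e(\al),e(\gamma)\ra=\la e(\be),e(\gamma)\ra=2^{-8}$. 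Applying Lemma~\ref{eprod} once more to the pairs $(\al,-\gamma)$ and $(\be,-\gamma)$ (noting $\al-\gamma=-\be$ and $\be-\gamma=-\al$) shows that the span of $e(\al),e(\be),e(\gamma)$ is closed under the Griess product and satisfies exactly the relations of Lemma~\ref{eiej}(3); hence the Griess subalgebra generated by $e(\al)$ and $e(\be)$ is isomorphic to $\mathcal{G}U_{3C}$.

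To upgrade this to an isomorphism of VOAs I would invoke Sakuma's classification \cite{Sa} (equivalently \cite{LYY2}): $\mathrm{VOA}(e(\al),e(\be))$ is one of the nine algebras $U_{nX}$, and among these the inner product $2^{-8}$ occurs only for $U_{3C}$ and $U_{4B}$. To separate these two, one can either argue that the product formula of Lemma~\ref{eprod} forces the generated Griess algebra to be $3$-dimensional (unlike $\mathcal{G}U_{4B}$), or, more structurally, that the order of $\tau_{e(\al)}\tau_{e(\be)}$ is $3$: indeed $e(\al),e(\be)\in V_{M_\al+M_\be}$ with $M_\al+M_\be\cong A_2\otimes E_8$ root-free, $t_{M_\al}t_{M_\be}$ has order $3$ by the preceding lemma and its proof, and Proposition~\ref{taue=tM} identifies the $\eta$-images of $\tau_{e(\al)},\tau_{e(\be)}$ with $t_{M_\al},t_{M_\be}$, so $3\mid|\tau_{e(\al)}\tau_{e(\be)}|$; the $6$-transposition bound together with $2^{-8}\neq 5\cdot2^{-8}$ then forces this order to be exactly $3$, and finally $U_{3A}$ has inner product $13\cdot2^{-10}\neq2^{-8}$. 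The main obstacle is precisely this separation of $U_{3C}$ from $U_{4B}$: the inner product alone does not pin down the isomorphism type, so the explicit structure constant from Lemma~\ref{eprod} (or the order-$3$ computation via Proposition~\ref{taue=tM}) is the essential extra input; everything else is a direct substitution into the lemmas already proved.
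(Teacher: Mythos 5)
Your handling of the orthogonal case and your Griess-algebra computation in the $\la \al,\be\ra=\pm1$ case are correct, and they reproduce exactly the ingredients the paper has already supplied (Lemmas \ref{eprod} and \ref{inner}, plus the $2B$-lemma quoted from \cite{LYY2}). The problem is your final ``upgrade'' step. Sakuma's theorem \cite{Sa}, as recalled in the paper's introduction, classifies the subalgebra of the \emph{Griess algebra} generated by two Ising vectors (and gives $|\tau_e\tau_f|\le 6$); it does not say that the generated sub\emph{VOA} is isomorphic to one of the nine $U_{nX}$, and no such VOA-level classification is available for you to cite. So after your (correct) elimination of $4B$ --- via the three-dimensionality of the product-closed span, or via $|\tau_{e(\al)}\tau_{e(\be)}|=3$ using Proposition \ref{taue=tM} --- you have only identified the Griess subalgebra with $\mathcal{G}U_{3C}$, which your direct computation already gave without Sakuma; the statement $\mathrm{VOA}(e(\al),e(\be))\cong U_{3C}$ does not follow from the Griess algebra alone, and this passage is the genuine gap.

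The paper closes this gap differently: it identifies the pair with the explicit model of $U_{3C}$. Since $M_\al\cap M_\be=0$ and $t_{M_\al}t_{M_\be}$ has order $3$, Lemma \ref{gl3c} gives $M_\al+M_\be\cong A_2\otimes E_8$, and $e(\al), e(\be)$ are precisely the standard $E_8$-type Ising vectors attached to the two $\sqrt{2}E_8$-sublattices of $A_2\otimes E_8$; the subVOA of $V_{A_2\otimes E_8}$ they generate is the $3C$-algebra constructed and analyzed in \cite{LYY,LYY2} (see also \cite{GL3c}), where the full VOA structure --- e.g.\ the decomposition over $L(\frac12,0)\otimes L(\frac{21}{22},0)$ quoted from \cite[Theorem 3.35]{LYY2} --- is established. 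This identification is also why the paper normalizes the $2$-cocycle so that $\varepsilon(\al\otimes\gamma,\be\otimes\gamma)=-1$: it ensures the product formula carries the $3C$ sign and the configuration really matches the known model. To repair your argument, replace the appeal to Sakuma by this identification (or else determine the module structure of $\mathrm{VOA}(e(\al),e(\be))$ over $\mathrm{Vir}(e(\al))\otimes\mathrm{Vir}(a)$ directly, which essentially amounts to redoing the work of \cite{LYY2}).
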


    \begin{nota}
        From now on, we define $\mathcal{W}_R$ to be the sub-VOA generated by $I=I_R=\{e(\al) \mid \al\in \Phi^+(R)\}$.
        We also denote the subgroup generated by the Miyamoto involutions $\{\tau_e|\ e\in I\}$ by $G(R)$ or simply by $G$.
    \end{nota}

\begin{rem}
By definition, it is clear that $e(\al) \in V_{R\otimes E_8}^+$ for all $\al\in \Phi^+(R)$.  Thus, $\mathcal{W}_R\subset V_{R\otimes
E_8}^+ $ has zero weight one subspace. Hence $\mathcal{W}_R$ is a moonshine type VOA. Note also that the set
$I=I_R=\{e(\al)\mid \al \in \Phi^+(R)\}$ satisfies the $(2B, 3C)$ condition by Corollary \ref{2b3ccon}.
\end{rem}

\subsubsection{Groups generated by Miyamoto involutions}
For any $g\in O(R)$ and $g'\in O(E_8)$, the map $g\otimes g'$ acts as an isometry on $R\otimes E_8$ by
\[
g\otimes g'(\al\otimes \be)= g(\al)\otimes g'(\be) \quad \text{ for } \al\in R, \be \in E_8.
\]
Therefore,  we can view the central product $O(R)* O(E_8)$ as a subgroup of $O(R\otimes E_8)$.

\medskip

\begin{nota}
For $\al\in \Phi(R)$, let $r_\al\in Weyl(R)$ be the reflection associated to $\al$, i.e.,
\[
r_\al(\be) = \be - \la \al, \be \ra \al \quad \text{ for } \be \in R.
\]
\end{nota}

\begin{lem}
Let $\al\in \Phi(R)$. Then  the SSD involution $t_{M_\al}$  acts as  $r_\al \otimes 1 $ on $R\otimes E_8$.
\end{lem}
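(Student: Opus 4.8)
The plan is to identify the $\pm 1$-eigenspaces of $t_{M_\al}$ with those of $r_\al\otimes 1$ on the real span $\R\otimes(R\otimes E_8)$; since an orthogonal involution is determined by its eigenspace decomposition, this forces the two maps to coincide, and one then restricts to the lattice $R\otimes E_8$.

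First I would record the orthogonal splitting induced by $\al$. Since $\al$ is a root, $\la\al,\al\ra=2\neq 0$, so $\R\otimes R=\R\al\perp\al^{\perp}$, where $\al^{\perp}=\{\gamma\in\R\otimes R\mid \la\al,\gamma\ra=0\}$. Tensoring with $\R\otimes E_8$ and using that the form on $R\otimes E_8$ is the product form $\la\al\otimes\be,\al'\otimes\be'\ra=\la\al,\al'\ra\la\be,\be'\ra$, one obtains the orthogonal decomposition
\[
\R\otimes(R\otimes E_8)\;=\;\bigl(\R\al\otimes(\R\otimes E_8)\bigr)\;\perp\;\bigl(\al^{\perp}\otimes(\R\otimes E_8)\bigr),
\]
in which the first summand is precisely $\R\otimes M_\al$, the real span of $M_\al=\Z\al\otimes E_8$.

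Next I would compute $r_\al\otimes 1$ on the two summands. On $\R\al\otimes(\R\otimes E_8)$ we have $(r_\al\otimes 1)(\al\otimes\be)=r_\al(\al)\otimes\be=-\al\otimes\be$, because $r_\al(\al)=\al-\la\al,\al\ra\al=-\al$; hence $r_\al\otimes 1$ acts as $-1$ there. On $\al^{\perp}\otimes(\R\otimes E_8)$ the map $r_\al$ fixes $\al^{\perp}$ pointwise (as $r_\al(\gamma)=\gamma$ whenever $\la\al,\gamma\ra=0$), so $r_\al\otimes 1$ acts as $+1$ there. By Definition \ref{tm} these are exactly the defining properties of $t_{M_\al}$ (namely $-1$ on the span of $M_\al$ and $+1$ on its orthogonal complement), whence $t_{M_\al}=r_\al\otimes 1$ on $\R\otimes(R\otimes E_8)$. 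Since $r_\al\otimes 1\in O(R)*O(E_8)\subseteq O(R\otimes E_8)$ preserves the lattice (equivalently, $t_{M_\al}\in O(R\otimes E_8)$ by the SSD property of $\sqrt{2}E_8$), restricting to $R\otimes E_8$ yields the assertion.

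There is no genuine obstacle here; the argument is a short linear-algebra verification. The only point requiring a moment's care is to match the ambient Euclidean space in Definition \ref{tm} — the real span of $M_\al$ — with the first summand of the displayed decomposition, which is immediate once the product form on $R\otimes E_8$ is written out.
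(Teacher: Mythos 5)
Your argument is correct and is essentially the paper's own proof: both identify $t_{M_\al}$ and $r_\al\otimes 1$ by checking they act as $-1$ on the span of $M_\al=\Z\al\otimes E_8$ and as $+1$ on its orthogonal complement (the paper does this on the spanning vectors $\al\otimes\gamma$ and $\be\otimes\gamma$ with $\la\al,\be\ra=0$, you phrase it as an eigenspace decomposition). No gaps; the extra remark that the common map preserves the lattice matches the paper's use of the SSD/RSSD property.
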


\begin{proof}
Recall that $t_{M_\al}$ acts as $-1$ on $M_\al$ and acts as $1$ on $ann_{R\otimes E_8}(M_\al)$. Therefore,
\[
t_{M_\al}(\al\otimes \gamma) = -\al\otimes \gamma \quad  \text{ and } \quad
t_{M_\al}(\be \otimes \gamma) = \be \otimes \gamma
\]
for any $\gamma \in E_8$ and $\be \in R$ with $\la \al, \be \ra =0$.   Hence $$t_{M_\al}(\be \otimes \gamma) = r_\al(\be) \otimes \gamma\quad \text{ for any } \gamma \in E_8, \be \in R$$
and we have $t_{M_\al}= r_\al \otimes 1 $ on $R\otimes E_8$.
\end{proof}

\begin{prop}\label{imageeta}
Let $\bar{\eta}: C_{\aut(V_{R\otimes E_8})}(\theta) /\la \theta\ra  \longrightarrow  O( R\otimes E_8)/ \la \pm 1\ra $ be the natural
epimorphism defined in Theorem \ref{thm:4.10}. Then
\[
\bar{\eta}(G(R)) \cong Weyl(R)/ (Weyl(R)\cap \la \pm 1\ra ).
\]
\end{prop}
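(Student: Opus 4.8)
The plan is to push the generators of $G(R)$ through the epimorphism $\eta$ and the quotient by $\la\theta\ra$, to recognise their images as the reflections $r_\al\tensor 1$, and then to read off the resulting subgroup of $O(R\tensor E_8)/\la\pm1\ra$. First I would recall that, by definition, $G(R)$ is generated by the Miyamoto involutions $\tau_{e(\al)}$, $\al\in\Phi^+(R)$. Each $e(\al)=e_{M_\al}$ is an Ising vector of the form $\varphi_x(e_E)$ with $E=M_\al\cong\sqrt2 E_8$, so Proposition \ref{taue=tM} applies: $\tau_{e(\al)}\in C_{\aut(V_{R\tensor E_8})}(\theta)$ and $\eta(\tau_{e(\al)})=t_{M_\al}$, the SSD involution associated to $M_\al$. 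In particular $G(R)\subset C_{\aut(V_{R\tensor E_8})}(\theta)$, so applying $\bar\eta$ to (the image of) $G(R)$ makes sense. By the lemma immediately preceding the Proposition, $t_{M_\al}=r_\al\tensor 1$ as an isometry of $R\tensor E_8$, so $\bar\eta(\tau_{e(\al)})$ is the class of $r_\al\tensor 1$ in $O(R\tensor E_8)/\la\pm1\ra$.

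Since $\bar\eta$ is a group homomorphism, $\bar\eta(G(R))$ is the subgroup of $O(R\tensor E_8)/\la\pm1\ra$ generated by the classes $\overline{r_\al\tensor 1}$, $\al\in\Phi^+(R)$. I would then combine the (already noted) fact that $g\mapsto g\tensor 1$ embeds $O(R)$ into $O(R\tensor E_8)$ with the standard fact that the reflections $r_\al$, $\al\in\Phi^+(R)$, generate $Weyl(R)$: the subgroup of $O(R\tensor E_8)$ generated by $\{r_\al\tensor 1\}$ is then $Weyl(R)\tensor 1\cong Weyl(R)$, and $\bar\eta(G(R))$ is precisely its image under the projection $O(R\tensor E_8)\to O(R\tensor E_8)/\la\pm1\ra$. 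By the second isomorphism theorem,
\[
\bar\eta(G(R))\cong (Weyl(R)\tensor 1)\,/\,\big((Weyl(R)\tensor 1)\cap\la\pm1\ra\big).
\]

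Finally I would identify the intersection $(Weyl(R)\tensor 1)\cap\la\pm1\ra$, which is the only step requiring a little care. Since $-\mathrm{id}_{R\tensor E_8}=(-\mathrm{id}_R)\tensor\mathrm{id}_{E_8}$ and, after extending scalars to $\Q$, the relation $g\tensor 1=\pm\mathrm{id}$ forces $g=\pm\mathrm{id}_R$, this intersection equals $(Weyl(R)\cap\la\pm1\ra)\tensor 1$. Hence
\[
\bar\eta(G(R))\cong Weyl(R)/(Weyl(R)\cap\la\pm1\ra),
\]
as claimed. Everything else is routine bookkeeping with the exact sequences of Theorem \ref{thm:4.10} and Lemma \ref{lem:4.11}; the main (and rather mild) obstacle is this last kernel computation, namely checking that the $-1$ occurring on $R\tensor E_8$ is already detected on the root-lattice factor, so that it lies in $Weyl(R)\tensor 1$ exactly when $-\mathrm{id}_R\in Weyl(R)$.
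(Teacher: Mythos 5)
Your proposal is correct and follows essentially the same route as the paper: apply Proposition \ref{taue=tM} to get $\eta(\tau_{e(\al)})=t_{M_\al}$, identify $t_{M_\al}=r_\al\otimes 1$ via the preceding lemma so that $\eta(G(R))\cong Weyl(R)$, and then pass to the quotient by $\la\pm 1\ra$. The only difference is that you spell out the final kernel computation (that $g\otimes 1=-\mathrm{id}$ forces $g=-\mathrm{id}_R$), which the paper leaves implicit.
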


\begin{proof}
Since $t_{M_\al} =r_\al\otimes 1$ on $R\otimes E_8$ and $\eta(\tau_{e(\al)} ) = t_{M_\al}$, we have
\[
\eta(G(R))=\la t_{M_\al}\mid \al\in \Phi^+(R)\ra = \la r_\al\otimes 1\mid \al\in \Phi^+(R)\ra \cong Weyl(R).
\]
Therefore, we have
\[
\bar{\eta}(G(R)) \cong Weyl(R)/ (Weyl(R)\cap \la \pm 1\ra )
\]
as desired.
\end{proof}

\begin{lem}\label{transeij}
For any   $\al, \be \in \Phi(R)$, we have
\[
\tau_{e(\al) }(e(\be) ) =
\begin{cases}
e(\al\pm \be) & \text{ if } \la \al, \be \ra = \mp 1, \\
e(\be) & \text{ otherwise.}
\end{cases}
\]
 \end{lem}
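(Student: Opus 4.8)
The plan is to split into the same three cases that govern the structure of $M_\al+M_\be$, namely $\la\al,\be\ra=\pm 2$ (i.e. $\al=\pm\be$), $\la\al,\be\ra=0$, and $\la\al,\be\ra=\pm 1$, and in each case to read off the action of $\tau_{e(\al)}$ on $e(\be)$ from the already-established properties of the $2B$- and $3C$-algebras (Lemma \ref{eiej}) together with the Miyamoto involution Theorem \ref{taue}. The trivial cases come first. If $\be=\pm\al$ then $e(\be)=e(\al)\in V_{e(\al)}(0)$, so $\tau_{e(\al)}$ fixes it. If $\la\al,\be\ra=0$ then by Corollary \ref{2b3ccon} the pair generates $U_{2B}\cong L(\tfrac12,0)\otimes L(\tfrac12,0)$, in which $e(\be)$ lies in the second tensor factor; this sits inside $V_{e(\al)}(0)$, so again $\tau_{e(\al)}(e(\be))=e(\be)$. (Alternatively one invokes Lemma \ref{lem:4.11}/Proposition \ref{taue=tM}: $\tau_{e(\al)}$ lies in $C_{\aut V_{R\otimes E_8}}(\theta)$ with $\eta$-image $t_{M_\al}$, which fixes $e^{\be\otimes\gamma}$ and $\omega_{M_\be}$ pointwise when $M_\al\perp M_\be$.)

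The main case is $\la\al,\be\ra=\pm 1$. Here Corollary \ref{2b3ccon} gives $\mathrm{VOA}(e(\al),e(\be))\cong U_{3C}$, and Lemma \ref{eiej}(3),(4) describes $U_{3C}$ completely: its weight-$2$ space is spanned by exactly three Ising vectors $e^0,e^1,e^2$, permuted in the obvious way by the $S_3$ generated by their Miyamoto involutions, with $\tau_{e^i}(e^j)=e^k$ for $\{i,j,k\}=\{0,1,2\}$. So it suffices to identify the third Ising vector of $\mathrm{VOA}(e(\al),e(\be))$. Assume first $\la\al,\be\ra=-1$. Then by Lemma \ref{eprod}, $e(\al)_1 e(\be)=\tfrac1{32}(e(\al)+e(\be)-e(\al+\be))$, and since $\la\al,\al+\be\ra=2-1=1$ and $\la\be,\al+\be\ra=-1+2=1$, the vector $e(\al+\be)$ is an Ising vector lying in $\mathrm{VOA}(e(\al),e(\be))_2$ (it is built from $M_{\al+\be}\subset M_\al+M_\be$). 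Comparing with the formula $(e^i)_1(e^j)=\tfrac1{32}(e^i+e^j-e^k)$ in Lemma \ref{eiej}(3), we conclude $\{e(\al),e(\be),e(\al+\be)\}$ is precisely the triple of Ising vectors of this $U_{3C}$, with $e(\al+\be)$ playing the role of $e^k$. Lemma \ref{eiej}(4) then yields $\tau_{e(\al)}(e(\be))=e(\al+\be)$, which is exactly the claimed value since $\la\al,\be\ra=-1$ means the sign is $+$. The case $\la\al,\be\ra=+1$ follows by replacing $\be$ with $-\be$: $e(-\be)=e(\be)$, $\la\al,-\be\ra=-1$, so $\tau_{e(\al)}(e(\be))=\tau_{e(\al)}(e(-\be))=e(\al+(-\be))=e(\al-\be)$, matching the $\mp$ convention.

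The only genuinely non-formal point is the claim that in the $\la\al,\be\ra=\pm1$ case the vector $e(\al+\be)$ indeed lies in the subVOA generated by $e(\al)$ and $e(\be)$ and is one of its three Ising vectors — but this is forced: Lemma \ref{eprod} exhibits $e(\al+\be)$ as an explicit linear combination of $e(\al),e(\be)$ and $e(\al)_1 e(\be)$, all of which lie in $\mathrm{VOA}(e(\al),e(\be))$, and Lemma \ref{eiej}(3) says a $3C$-algebra has exactly three Ising vectors in weight $2$, so $e(\al+\be)$ must be the third one. Thus I expect no real obstacle; the lemma is essentially a bookkeeping consequence of Lemmas \ref{eprod}, \ref{eiej} and \ref{2b3ccon}, and the main care needed is tracking the sign conventions $\la\al,\be\ra=\mp1 \Leftrightarrow e(\al\pm\be)$ consistently.
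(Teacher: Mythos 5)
Your proposal is correct and follows essentially the same route as the paper, whose proof is the one-line observation that the lemma follows from Lemma \ref{eprod} together with Lemma \ref{eiej}(4); you have simply spelled out the case division ($\la\al,\be\ra=\pm2,0,\pm1$) and the identification of $e(\al+\be)$ as the third Ising vector of the $3C$-algebra, which is exactly the intended argument. The sign bookkeeping ($\la\al,\be\ra=\mp1\Leftrightarrow e(\al\pm\be)$) is also handled correctly.
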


 \begin{proof}
 It follows from Lemma \ref{eprod} and Lemma \ref{eiej}~(4).
 \end{proof}

    \begin{thm}\label{GR}
        Let $G=G(R)$ be the subgroup  generated by $\{\tau_{e(\al)}| \al \in \Phi^+(R)\}$.  Then
       \[
       G(R)\cong Weyl(R)/ (Weyl(R)\cap \la \pm 1\ra )
       \]
    as a subgroup of $\aut(\mathcal{W}_R)$.
    \end{thm}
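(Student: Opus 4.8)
The plan is to bypass the ambient VOA $V_{R\otimes E_8}$ and instead make $G(R)$ act \emph{faithfully} on the finite generating set $I_R=\{e(\al)\mid\al\in\Phi^+(R)\}$ of $\mathcal{W}_R$, and then to recognise this permutation action as the natural action of $Weyl(R)$ on the set $\Phi(R)/\{\pm1\}$ of roots up to sign. Granting that, the theorem follows since the latter action has image $Weyl(R)/(Weyl(R)\cap\la\pm1\ra)$.

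First I would note that, by Lemma \ref{transeij}, each generator $\tau_{e(\al)}$ of $G(R)$ maps $I_R$ into $I_R$; hence $G(R)$ permutes $I_R$ and we obtain a homomorphism $\rho\colon G(R)\to\mathrm{Sym}(I_R)$. Since $\mathcal{W}_R$ is by construction generated as a VOA by $I_R$, any automorphism of $\mathcal{W}_R$ fixing $I_R$ pointwise is the identity; thus $\rho$ is injective and $G(R)\cong\rho(G(R))$. Next I would record that $\al\mapsto e(\al)$ is a bijection $\Phi^+(R)\to I_R$: it is onto by definition, and $e(\al)=e(\be)$ forces the sets of lattice exponents occurring in $e(\al)$ and $e(\be)$ to coincide, i.e. $M_\al(4)=M_\be(4)$, whence $M_\al=M_\be$ and $\Z\al=\Z\be$, so $\al=\be$ for positive roots.

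Transporting $\rho$ along this bijection, the content of Lemma \ref{transeij} --- together with the reflection formula $r_\al(\be)=\be-\la\al,\be\ra\al$ and the identity $e(\gamma)=e(-\gamma)$ --- is precisely that
\[
\tau_{e(\al)}\bigl(e(\be)\bigr)=e\bigl(r_\al(\be)\bigr)\qquad\text{for all }\al,\be\in\Phi^+(R),
\]
which one checks case by case according to $\la\al,\be\ra\in\{-1,0,1\}$ (and $\la\al,\be\ra=2$, i.e. $\be=\al$), using only that $\al\pm\be\in\Phi(R)$ when $\la\al,\be\ra=\mp1$. Hence $\rho$ identifies $G(R)=\la\tau_{e(\al)}\mid\al\in\Phi^+(R)\ra$ with the group $\la r_\al\mid\al\in\Phi^+(R)\ra=Weyl(R)$ acting on $\Phi(R)/\{\pm1\}$. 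It then remains to see that this action has kernel $Weyl(R)\cap\la\pm1\ra$: an element $w$ of the kernel satisfies $w(\be)\in\{\be,-\be\}$ for every root $\be$, and since the Dynkin diagram of $R$ (type $A_n$, $D_n$, $E_6$, $E_7$, $E_8$) is connected, the induced sign function is additive along chains $\be_1,\be_2,\be_1+\be_2$ of roots and hence constant, forcing $w=\pm1$. (This kernel computation is the group-theoretic fact already underlying Proposition \ref{imageeta}.) Combining the steps gives $G(R)\cong\rho(G(R))\cong Weyl(R)/(Weyl(R)\cap\la\pm1\ra)$.

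I do not expect a serious obstacle. The two points needing care are: the injectivity of $\rho$, which must be deduced solely from the fact that $I_R$ generates $\mathcal{W}_R$ as a VOA; and the bookkeeping of the identifications $e(\al)\leftrightarrow\al$ and $e(\al)=e(-\al)$ when matching Lemma \ref{transeij} to the reflections $r_\al$. Both the case analysis and the Weyl-group kernel computation are routine. As an alternative to the last step one could invoke Proposition \ref{imageeta} directly and show $\bar\eta$ is injective on $G(R)$: a nontrivial element of $G(R)\cap\ker\bar\eta$ would lift to some $\varphi_x$ or $\varphi_x\theta$, and since $\mathcal{W}_R\subset V_{R\otimes E_8}^+$ (so $\theta$ acts trivially on $\mathcal{W}_R$) while the lattice exponents occurring in $\mathcal{W}_R$ span $R\otimes E_8$, this would force $\varphi_x=\mathrm{id}$, a contradiction.
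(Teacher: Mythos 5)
Your proposal is correct, and it takes a genuinely different route from the paper for the decisive steps. The paper's proof works in the ambient VOA $V_{R\otimes E_8}^+$ and factors through the lattice--VOA automorphism machinery: it invokes Proposition \ref{imageeta} (hence Proposition \ref{taue=tM}, i.e.\ $\eta(\tau_{e(\al)})=t_{M_\al}=r_\al\otimes 1$, and Shimakura's description of $C_{\aut{V_L}}(\theta)$) to identify $\bar\eta(G)$ with $Weyl(R)/(Weyl(R)\cap\la\pm1\ra)$, and then uses the exact sequence of Theorem \ref{thm:4.10} to write any $g\in\ker\bar\eta\cap G$ as some $\varphi_v$, concluding from the explicit shape of $e(\al)$ that such $g$ fixes every $e(\al)$ and hence acts trivially on $\W_R$. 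You instead stay entirely inside $\aut(\W_R)$: faithfulness of the permutation representation of $G(R)$ on $I_R$ comes solely from the fact that $I_R$ generates $\W_R$ (the fixed-point set of an automorphism is a subVOA), the bijection $\al\mapsto e(\al)$ together with Lemma \ref{transeij} and $e(\gamma)=e(-\gamma)$ converts that action into the Weyl action on $\Phi(R)/\{\pm1\}$, and the kernel of the latter is computed by the elementary sign/connectivity argument. What each approach buys: yours is more self-contained (no appeal to $\eta$, to the $\varphi_v$'s, or to the structure of $\aut V_L^+$) and it makes the faithfulness step completely transparent --- indeed it also supplies cleanly the converse inclusion (an element trivial on $\W_R$ has trivial $\bar\eta$-image) that the paper's argument leaves implicit; the paper's route, on the other hand, yields the extra information of Proposition \ref{imageeta}, namely the identification of the image of $G$ inside $O(R\otimes E_8)/\la\pm1\ra$ via the SSD involutions $t_{M_\al}$, which is of independent interest and is reused elsewhere. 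Your closing alternative for the last step is essentially the paper's own argument, so you have both proofs in hand; the only points to keep explicit in a write-up are the ones you already flagged (injectivity of $\rho$ from generation, and the bookkeeping $e(\al)=e(-\al)$ when matching $\tau_{e(\al)}$ with $r_\al$).
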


\begin{proof}
By Lemma \ref{transeij}, $G$ acts as a permutation group on $\{ e(\al)\mid \al\in \Phi^+(R)\}$.

Let $g\in \ker \bar{\eta} \cap G$. Then $g(e(\al))=e(\be)$ for some $\be \in \Phi^+(R)$.

Since $g\in \ker \bar{\eta}$, we also have $g=\varphi_v$ for some $v\in R\otimes E_8$ by Theorem \ref{thm:4.10}.
Thus, $e(\be)= \varphi_v(e(\al))$ and we must have $e(\be)=e(\al)$.  Therefore, $g$ acts as an identity on $\W_R$ since $\W_R$ is generated by $\{ e(\al)\mid \al\in \Phi^+(R)\}$.  The desired conclusion now follows from Proposition \ref{imageeta}.
\end{proof}

\section{Griess algebra and the central charge of $\W_R$}   Next we study the Griess algebra and determine the central charge of $\W_R$.

\begin{nota}
Let $A$ be a set. We use $\#(A)$ to denote the number of elements in $A$.
\end{nota}

\begin{lem}
Let $\mathcal{G}_R$  be the Griess subalgebra of $\W_R$ generated by $\{e(\al) \mid \al\in \Phi^+(R)\}$. Then $\mathcal{G}_R=span_\Z\{e(\al) \mid \al\in \Phi^+(R)\}$ and $\dim \mathcal{G}_R =\#(\Phi^+(R))$.
\end{lem}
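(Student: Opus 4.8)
The plan is to show two things: first, that the span of the Ising vectors $\{e(\al)\mid \al\in\Phi^+(R)\}$ is already closed under the Griess algebra product (the degree-$2$ product $u_1v$ on $(\W_R)_2$), so that it coincides with the Griess subalgebra $\mathcal{G}_R$ they generate; and second, that these vectors are linearly independent, so that $\dim\mathcal{G}_R=\#(\Phi^+(R))$.

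For the closure, I would argue as follows. Let $\al,\be\in\Phi^+(R)$. If $\al=\be$, then $e(\al)_1 e(\be)=2e(\al)$, since an Ising vector $e$ satisfies $e_1 e=2e$. If $\la\al,\be\ra=0$ then $M_\al\perp M_\be$ and one checks directly that $e(\al)_1 e(\be)=0$ (this is the $2B$ case, where $\mathrm{VOA}(e(\al),e(\be))\cong U_{2B}\cong L(\tfrac12,0)\otimes L(\tfrac12,0)$ and the two Ising vectors are mutually orthogonal idempotents). If $\la\al,\be\ra=-1$, then Lemma \ref{eprod} gives $e(\al)_1 e(\be)=\tfrac1{32}(e(\al)+e(\be)-e(\al+\be))$, and $\al+\be\in\Phi^+(R)$ since $\al+\be$ is again a positive root (sum of two positive roots with inner product $-1$). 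If $\la\al,\be\ra=+1$, then $\la\al,-\be\ra=-1$; applying the previous case to $\al$ and $-\be$ and using $e(-\be)=e(\be)$ (and $M_{-\be}=M_\be$) together with the fact that in this case $\al-\be\in\Phi(R)$ so $\pm(\al-\be)\in\Phi^+(R)$ and $e(\al-\be)=e(\be-\al)$ lies in our set, we again land inside $span_\Z\{e(\gamma)\mid\gamma\in\Phi^+(R)\}$. In every case the product of two spanning vectors is a $\Z$-linear combination of spanning vectors, so $span_\Z\{e(\al)\mid\al\in\Phi^+(R)\}$ is a subalgebra of the Griess algebra containing the generators, hence equals $\mathcal{G}_R$. (One should also note that the identity element / Virasoro vector issue does not interfere: we only need closure under the product, not a unit.)

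For the linear independence, I would use the inner product computed in Lemma \ref{inner} together with the grading of the lattice VOA. Recall $e(\al)=\tfrac1{16}\omega_{M_\al}+\tfrac1{32}\sum_{\be\in\Phi^+(E_8)}(e^{\al\otimes\be}+e^{-\al\otimes\be})\in V_{R\otimes E_8}$. The ``$e^{\gamma}$-part'' of $e(\al)$ is supported on the set of norm-$4$ vectors $\{\pm\al\otimes\be\mid\be\in\Phi^+(E_8)\}$ of $R\otimes E_8$, and for distinct positive roots $\al\ne\al'$ these support sets are disjoint (since $\al\otimes\be=\pm\al'\otimes\be'$ with $\be,\be'\in\Phi^+(E_8)$ would force $\al=\pm\al'$). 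Hence the projections of the $e(\al)$ onto the coordinate subspaces $\mathbb{C}e^{\gamma}$ of the degree-$2$ part of $V_{R\otimes E_8}$ are supported on pairwise disjoint index sets, which immediately gives linear independence. Alternatively, and perhaps more cleanly, one can invoke positive-definiteness of the Griess form: by Lemma \ref{inner} the Gram matrix of $\{e(\al)\}$ has diagonal entries $1/4$ and off-diagonal entries in $\{0,1/2^8\}$, and this matrix is diagonally dominant (each row sum of off-diagonal entries is at most $\#(\Phi^+(R))/2^8 < 1/4$ for the ranks in question — though to be safe one should instead just use the disjoint-support argument, which needs no numerical estimate). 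Either way, $\{e(\al)\mid\al\in\Phi^+(R)\}$ is a basis of $\mathcal{G}_R$ and $\dim\mathcal{G}_R=\#(\Phi^+(R))$.

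The main obstacle, I expect, is making the closure argument airtight in the case $\la\al,\be\ra=+1$: one must be careful that $e(\al+\be)$ appearing from Lemma \ref{eprod} (applied with a root having inner product $-1$) really corresponds to an element of the form $e(\gamma)$ with $\gamma\in\Phi^+(R)$, using $M_{-\be}=M_\be$, $e(-\be)=e(\be)$, and the closure of $\Phi(R)$ under the relevant additions; this is a bookkeeping point about positive roots of $ADE$ type rather than anything deep. The linear independence, by contrast, is essentially immediate once one observes the disjointness of the $e^{\gamma}$-supports of the various $e(\al)$ in the ambient lattice VOA $V_{R\otimes E_8}$.
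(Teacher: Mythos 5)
Your proposal is correct and follows essentially the same route as the paper: closure of the span under the Griess product via Lemma \ref{eprod} (with the $\la\al,\be\ra=+1$ and orthogonal cases handled just as the paper implicitly does, using $e(-\be)=e(\be)$ and $M_\al\perp M_\be$), and linear independence from the disjointness of the $e^\gamma$-supports, which is the paper's observation that $M_\al\cap M_\be=0$ for $\al\neq\pm\be$ together with $\{e^x\}$ being a basis of the twisted group algebra. Your parenthetical diagonal-dominance alternative would need the sharper count $m_\al=2(h-2)$ rather than $\#(\Phi^+(R))$ (and even then fails for large $A_{n-1}$), but since you explicitly fall back on the disjoint-support argument this does not affect the proof.
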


\begin{proof}
By Lemma \ref{eprod}, it is clear that $span_\Z\{e(\al) \mid \al\in \Phi^+(R)\}$ is closed under the Griess algebra product. Therefore, we have $\mathcal{G}_R=span_\Z\{e(\al) \mid \al\in \Phi(R)\}$.

That $\{e(\al) \mid \al\in \Phi^+(R)\}$ is linearly independent follows from the fact that $M_\al\cap M_\be =0$ whenever $\al
\neq \pm \be$. Note also that
$$e(\al)=\frac{1}{16}\omega_{M_\al} + \frac{1}{32} \sum_{\be\in \Phi^+(E_8)} (e^{\al\otimes \be }+ e^{-\al\otimes \be })$$
and $\{ e^x \mid x\in L\}$ is a basis of the twisted group algebra $\C\{L\}$.
\end{proof}

\begin{lem}\label{mal}
Let $R$ be a simple root lattice of type A,D, or E.
For $\alpha\in R$, we define
\[
\Phi^+_{\alpha}(R)=\{ \beta\in \Phi^+(R)\mid  \la \alpha, \beta\ra =\pm 1\}\quad \text{ and } \quad
m_{\alpha}=m_\al(R) = \#(\Phi^+_{\alpha}(R)).
\]
Then $m_\al = 2(h-2)$, where $h$ is the Coxeter number of $R$.
\end{lem}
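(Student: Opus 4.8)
The plan is to compute $m_\alpha = \#\{\beta \in \Phi^+(R) \mid \langle \alpha,\beta\rangle = \pm 1\}$ by a case-by-case count over the types $A_{n-1}$, $D_n$, $E_6$, $E_7$, $E_8$, and then check in each case that the answer equals $2(h-2)$ for the corresponding Coxeter number $h$ (namely $h = n$ for $A_{n-1}$, $h = 2n-2$ for $D_n$, and $h = 12, 18, 30$ for $E_6, E_7, E_8$). The key simplification is that the quantity $m_\alpha$ is independent of the choice of root $\alpha$, since the Weyl group acts transitively on roots (all types $A,D,E$ are simply-laced) and preserves inner products; so it suffices to evaluate $m_\alpha$ at one convenient root in each type. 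Note also that for a simply-laced root system, $\langle\alpha,\beta\rangle \in \{0,\pm 1,\pm 2\}$ for roots, and $\langle\alpha,\beta\rangle = \pm 2$ forces $\beta = \pm\alpha$; hence $m_\alpha$ counts exactly those positive roots $\beta \ne \alpha$ that are not orthogonal to $\alpha$.

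The cleanest way to organize the count is to pass from positive roots to all roots: since $\beta \mapsto -\beta$ is a bijection between the $\beta \in \Phi^+(R)$ with $\langle\alpha,\beta\rangle = 1$ and those with $\langle\alpha,\beta\rangle = -1$, we get $m_\alpha = \#\{\beta \in \Phi(R) \mid \langle\alpha,\beta\rangle = 1\}$, i.e.\ the number of roots $\beta$ with $\langle\alpha,\beta\rangle = 1$. For type $A_{n-1}$, take $\alpha = \epsilon_1 - \epsilon_2$ in the standard model; the roots $\epsilon_i - \epsilon_j$ with $\langle\alpha, \epsilon_i-\epsilon_j\rangle = 1$ are $\epsilon_1 - \epsilon_k$ ($k \ge 3$) and $\epsilon_k - \epsilon_2$ ($k \ge 3$), giving $2(n-2) = 2(h-2)$. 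For type $D_n$, take $\alpha = \epsilon_1 - \epsilon_2$; the roots $\pm\epsilon_i \pm \epsilon_j$ with inner product $1$ with $\alpha$ are $\pm(\epsilon_1 \pm \epsilon_k)$ nonorthogonal appropriately and $\pm(\epsilon_2 \pm \epsilon_k)$ for $k \ge 3$, which tallies to $4(n-2) = 2(2n-4) = 2(h-2)$. For $E_6, E_7, E_8$ one either performs the analogous explicit enumeration in the standard coordinate model, or — more efficiently — uses the standard fact that in an irreducible root system the number of roots $\beta$ with $\langle\alpha,\beta\rangle = 1$ equals $2(h-2)$; equivalently, one can invoke the relation $\#\Phi(R) = nh$ together with the decomposition of $\Phi(R)$ under the chain $\{\beta : \langle\alpha,\beta\rangle = k\}$ for $k = -2,-1,0,1,2$, which gives $2 + 2m_\alpha + (\text{number orthogonal to }\alpha)$; identifying the orthogonal roots with $\Phi$ of the rank $n-1$ subsystem $\alpha^\perp$ and using known Coxeter numbers closes the computation.

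The main obstacle is simply the bookkeeping for the exceptional types $E_6, E_7, E_8$: unlike $A$ and $D$, there is no one-line coordinate description that makes the count transparent, so one must either grind through the $72$, $126$, or $240$ roots (conveniently split into the $D_n$-part $\pm\epsilon_i\pm\epsilon_j$ and the spinor part $\frac12\sum \pm\epsilon_i$), or appeal to the general structural fact about $2(h-2)$. I would state the result as a uniform fact with the case-check relegated to the standard models, citing \cite{CS} for the root system data, and verify the three exceptional values $2(12-2)=20$, $2(18-2)=32$, $2(30-2)=56$ against the explicit root counts. This is routine but must be done carefully; no conceptual difficulty arises beyond it.
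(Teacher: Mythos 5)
Your proposal is correct and follows essentially the same route as the paper: a case-by-case count over the types using the total number of roots $h\ell$ and the identification of the roots orthogonal to $\alpha$ (the paper organizes this as $m_\al=\tfrac12\bigl(\#\Phi(R)-\#P(\al)-2\bigr)$ with $P(\al)$ of type $A_{n-3}$, $A_1+D_{n-2}$, $A_5$, $D_6$, $E_7$ respectively), which is precisely one of the two options you sketch for the exceptional types. Just avoid quoting ``the number of roots with $\la\al,\be\ra=1$ is $2(h-2)$'' as a known fact, since that is exactly the statement being proved; your enumeration/orthogonal-subsystem fallback handles $E_6,E_7,E_8$ correctly.
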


\begin{proof}
The lemma can be proved by case by case checking. First we note that $$\#(\Phi(R))=h\cdot \ell,$$ where $\ell$ is the rank of $R$. Moreover, $\la \al, \be \ra = 0, \pm 1, \pm 2$ for any $\al, \be \in \Phi(R).$

Set $P(\al) = \{ \be \in \Phi(R)\mid \la \al, \be \ra =0\}$. Then
\[
m_\al=m_\al(R)= \frac{1}2 \left( \#(\Phi(R)) - \#(P(\al)) -2\right).
\]

For $R=A_{n-1}$, $h=n$ and $P(\al)$ is a root system of type $A_{n-3}$. Thus,
$$m_\al= \frac{1}2 ( n(n-1)- (n-2)(n-3) -2)= 2(n-2).$$

For $R=D_{n}$, $h=2n-2$ and $P(\al)$ is of type $A_1+ D_{n-2}$. Thus,
$$ m_\al= \frac{1}2 ( 2n(n-1)- 2(n-2)(n-3)-2 -2)= 2(2n-2-2).$$

For $R=E_6$, $ h=12$, $P(\al)$ is of type $A_5$ and $m_\al=\frac{1}2(72-30-2)=20=2(12-2)$.

For $R=E_7$, $h=18$, $P(\al)$ is of type $D_6$ and $m_\al=\frac{1}2(126-60-2)=32=2(18-2)$.

For $R=E_8$, $h=30$, $P(\al)$ is of type $E_7$ and $m_\al=\frac{1}2(240-126-2)=56=2(30-2)$.
\end{proof}

 \begin{lem}
 Let $h$ be the Coxeter number of $R$  and let $\ell$ be the rank of $R$. Define
 \[
 w_R=\frac{32}{h+30}\sum_{\al\in \Phi^+(R)} e(\al).
 \]
 Then $w_R$ is a Virasoro vector of central charge
$(8\ell h)/(h+30)$. Moreover,  we have
 \[
 (w_R)_1(e(\al))= (e(\al))_1 (w_R)=2 e(\al)  \qquad \text{ for all }  \al \in \Phi^+(R).
 \]
 In particular,  $w_R$ is the conformal element of $\W_R$.
 \end{lem}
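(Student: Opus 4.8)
The plan is to verify the three assertions—that $w_R$ is a Virasoro vector, that it has the stated central charge, and that it acts as $2$ (i.e. as an $L(0)$-eigenvalue $2$ element, equivalently as the conformal weight operator) on each $e(\al)$—by direct computation in the Griess algebra $\mathcal{G}_R$, using only Lemma~\ref{eprod}, Lemma~\ref{inner}, Lemma~\ref{mal}, and the standard axioms relating a Virasoro vector $w$ to its bracket: $w$ is a (not necessarily simple) Virasoro vector of central charge $c$ iff $w_1 w = 2w$ and $w_3 w = \frac{c}{2}\mathbbm{1}$, and it is the conformal element of the VOA generated by the $e(\al)$ iff in addition $w_1 e(\al) = 2 e(\al)$ for every generator, since $\W_R$ is generated by weight-$2$ elements. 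So the whole proof reduces to three bracket computations.

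First I would compute $(w_R)_1(e(\al))$. Write $\kappa = 32/(h+30)$, so $w_R = \kappa \sum_{\be\in\Phi^+(R)} e(\be)$. Then $(w_R)_1 e(\al) = \kappa\sum_\be e(\be)_1 e(\al)$. Split the sum according to $\la\al,\be\ra \in \{2,\pm 1,0\}$: the term $\be = \al$ contributes $e(\al)_1 e(\al) = 2 e(\al)$ (an Ising vector is a Virasoro vector of central charge $1/2$, so $e_1 e = 2e$); each of the $m_\al = 2(h-2)$ roots $\be$ with $\la\al,\be\ra = \pm 1$ contributes, via Lemma~\ref{eprod} (applied after replacing $\be$ by $-\be$ if necessary so that $\la\al,-\be\ra=-1$, using $e(\be)=e(-\be)$), a term $\frac{1}{32}\big(e(\al)+e(\be')-e(\al+\be')\big)$ where $\{e(\be'),e(\al+\be')\}$ runs over the two Ising vectors in the $A_2\otimes E_8$ block other than $e(\al)$; and the orthogonal roots contribute $0$. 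The key combinatorial point is that as $\be$ ranges over $\Phi^+_\al(R)$, the vectors $\be'$ and $\al+\be'$ together range over $\Phi^+_\al(R)$ again (each $A_2$-block through $\al$ contains a pair summing appropriately), so the $\pm e(\be)$ contributions off the diagonal cancel in pairs, leaving only the coefficient of $e(\al)$ itself: $2 + \frac{1}{32}\cdot m_\al = 2 + \frac{2(h-2)}{32} = 2 + \frac{h-2}{16} = \frac{h+30}{16}$. Hence $(w_R)_1 e(\al) = \kappa\cdot\frac{h+30}{16}\, e(\al) = \frac{32}{h+30}\cdot\frac{h+30}{16}\, e(\al) = 2 e(\al)$, using commutativity of the Griess product for the symmetric statement $(e(\al))_1(w_R) = 2e(\al)$ as well.

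Next, summing the identity $(w_R)_1 e(\al) = 2 e(\al)$ over $\al\in\Phi^+(R)$ and multiplying by $\kappa$ gives immediately $(w_R)_1(w_R) = 2 w_R$. For the central charge I would compute $(w_R)_3(w_R)$, which lands in $V_0 = \C\mathbbm{1}$, so it equals $\la w_R, w_R\ra\,\mathbbm{1}$ up to the normalization built into Lemma~\ref{inner} (recall $\la u,v\ra$ for weight-$2$ vectors is defined so that $u_3 v = \la u,v\ra \mathbbm{1}$, and an Ising vector has $\la e,e\ra = 1/4 = \frac{1}{2}\cdot\frac14\cdot 2$, i.e. central charge $1/2$). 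Using bilinearity of the form and Lemma~\ref{inner}, $\la w_R,w_R\ra = \kappa^2\sum_{\al,\be} \la e(\al),e(\be)\ra = \kappa^2\big(\tfrac14 \#\Phi^+(R) + \tfrac1{2^8}\cdot \#\{(\al,\be): \la\al,\be\ra=\pm1\}\big)$. Now $\#\Phi^+(R) = \tfrac12 h\ell$ and $\#\{(\al,\be)\in\Phi^+\times\Phi^+ : \la\al,\be\ra = \pm1\} = \#\Phi^+(R)\cdot m_\al = \tfrac12 h\ell\cdot 2(h-2) = h\ell(h-2)$. Plugging in, $\la w_R,w_R\ra = \kappa^2\cdot\tfrac12 h\ell\big(\tfrac14 + \tfrac{2(h-2)}{2^8}\big) = \kappa^2\cdot\tfrac{h\ell}{2}\cdot\tfrac{64 + 2(h-2)}{256} = \kappa^2\cdot\tfrac{h\ell}{2}\cdot\tfrac{2(h+30)}{256} = \kappa^2\cdot\tfrac{h\ell(h+30)}{256}$. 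Since $\kappa^2 = \tfrac{1024}{(h+30)^2}$, this is $\tfrac{1024}{(h+30)^2}\cdot\tfrac{h\ell(h+30)}{256} = \tfrac{4h\ell}{h+30}$. The central charge is then $c = 2\la w_R,w_R\ra = \tfrac{8h\ell}{h+30}$, as claimed.

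The main obstacle is the off-diagonal cancellation in the first computation: one must check carefully that the multiset $\{\be', \al+\be' : \be\in\Phi^+_\al(R)\}$ (with the sign conventions sorting each contribution into the form of Lemma~\ref{eprod}) coincides with $\Phi^+_\al(R)$ so that the non-$e(\al)$ terms telescope to zero. This is a root-system bookkeeping argument—each root $\be$ with $\la\al,\be\ra = \pm 1$ lies in a unique $A_2$-subsystem $\{\pm\al,\pm\be,\pm(\al+\be)\}$ (up to the choice of representatives), and within $\Phi^+$ the three positive roots of that $A_2$ pair up correctly—and it should be handled uniformly, perhaps with a brief remark that it can alternatively be seen type-by-type using Lemma~\ref{mal}. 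Everything else is routine bilinear algebra once Lemmas~\ref{eprod}, \ref{inner}, and \ref{mal} are in hand.
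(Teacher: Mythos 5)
Your proposal is correct and follows essentially the same route as the paper's proof: the same fusion formula $e(\al)_1e(\be)=\frac{1}{32}(e(\al)+e(\be)-e(\al+\be))$, the same inner products $\la e(\al),e(\be)\ra\in\{\frac14,\frac1{2^8},0\}$, and the count $m_\al=2(h-2)$, giving the eigenvalue $\frac{h+30}{16}$, the norm $\frac{4h\ell}{h+30}$, and hence central charge $\frac{8h\ell}{h+30}$ exactly as in the paper. The only differences are cosmetic: you compute $(w_R)_1e(\al)=2e(\al)$ first and obtain $(w_R)_1w_R=2w_R$ by summing, and you make explicit the pairing-off of the $e(\be)-e(\al\pm\be)$ terms inside each $A_2$-subsystem through $\al$, a cancellation the paper uses implicitly.
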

\begin{proof}
By Lemma \ref{eiej} (3), we have
\[
(e(\al))_1e(\be)=
\begin{cases}
2e(\al) & \text{ if }  \al=\pm \be,\\
\frac{1}{32} (e(\al) +e(\be) - e(\al \mp  \be) )& \text{ if } \la \al, \be \ra =\pm 1,\\
0 & \text{ if }  \la \al, \be \ra =0.
\end{cases}
\]
Hence
\[
\left(\sum_{\al \in \Phi^+(R)} e(\al) \right)_1 \sum_{\be \in \Phi^+(R) } e(\be)  = \sum_{\al \in \Phi^+(R)}  \left( 2e(\al)+ \frac{1}{32}m_\al e(\al)\right),
\]
where $m_\al =\#( \{ \beta\in \Phi^+(R)\mid  \la \alpha, \beta\ra =\pm 1\}) =2(h-2)$ (cf. Lemma \ref{mal}). Therefore,
\[
\left(\sum_{\al \in \Phi^+(R)} e(\al) \right)_1 \sum_{\al \in \Phi^+(R) } e(\al)   = (2+2(h-2)/32) \sum_{\al \in \Phi^+(R)} e(\al)=
\frac{(h+30)}{16} \sum_{\al \in \Phi^+(R)} e(\al)
\]
and $(w_R)_1(w_R)=2w_R$. Moreover,
\[
\la w_R,w_R\ra = \left( \frac{32}{h+30}\right) ^2\sum_{\al\in \Phi^+(R)}  \left( \frac{1}4 +\frac{1}{2^8}\cdot 2(h-2)\right) = \frac{4h\ell}{h+30}.
\]
Hence, the central charge of $w$ is $8h \ell/(h+30)$. In addition, we have
\[
(w_R)_1 e(\al) = \frac{32}{h+30}\left(\sum_{\be\in \Phi^+(R)} e(\be)\right)_1  e(\al) = \frac{32}{h+30}  \left( 2e(\al)+ \frac{1}{32}\cdot 2(h-2) e(\al)\right)= 2e(\al)
\]
as desired.
\end{proof}

\medskip

\section{More about $\W_{A_{n-1}}$}

In this section, we shall give more details about the VOA $\W_{A_{n-1}}$. We use the same notation as in Section \ref{sec:3}.  In particular, $X=E_8^n$ and
\[
\begin{split}
\mathcal{A}_{n-1} = span_\Z\{ \nu_{i,j}(\al)\mid \al \in E_8, 1\leq i<j \leq n\}\cong A_{n-1}\otimes E_8. %,\\
\end{split}
\]
%%%%%%%%%%%%%%%%%%%%%%%%%%%%%%%%%%%%%%
For simplicity, we denote  $$M_{i,j} =M_{\epsilon_i-\epsilon_j} = \{\nu_{i,j} (\al)\mid \al\in E_8\}$$ and
\[
e^{i,j} = e_{M_{i,j}}= \frac{1}{16}\omega_{M_{i,j}} + \frac{1}{32} \sum_{\be\in \Phi^+(E_8)} (e^{\nu_{i,j}(\be) }+ e^{-\nu_{i,j}(\be)}).
\]

\medskip

For  $R=A_{n-1}$, the subgroup generated by Miyamoto involutions $\{\tau_{e^{i,j}}\mid 1\leq i<j\leq n\}$ can be described more explicitly.

\begin{lem}\label{tauM}
Let $M=\{ (\al, -\al)\mid \al\in E_8\}\subset E_8\perp E_8$. Then $\tau_{e_M}$ acts as a transposition on the two tensor copies
of the lattice VOA $V_{E_8^2}\cong V_{E_8}\otimes V_{E_8}$.
\end{lem}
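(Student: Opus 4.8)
The plan is to identify the sublattice $M=\{(\al,-\al)\mid\al\in E_8\}$ inside $E_8\perp E_8$, check that it is a copy of $\sqrt{2}E_8$, and then show that the SSD involution $t_M$ on $E_8\perp E_8$ is precisely the isometry that swaps the two orthogonal summands. Once this identification is in place, the action of $\tau_{e_M}$ on the lattice VOA follows from Proposition \ref{taue=tM}: the image of $\tau_{e_M}$ under the epimorphism $\eta: C_{\aut(V_{E_8^2})}(\theta)\to O(E_8\perp E_8)$ equals $t_M$, so $\tau_{e_M}$ is a lift of the swap isometry to $\aut(V_{E_8^2})$, and any such lift interchanges the tensor factors $V_{E_8}\otimes V_{E_8}$ (up to the kernel $\mathrm{Hom}(L,\Z_2)$, which acts by sign characters and does not affect the statement that it is a transposition of the two copies).

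The key steps, in order: First I would verify $M\cong\sqrt{2}E_8$ — for $\al,\be\in E_8$ one has $\la(\al,-\al),(\be,-\be)\ra = \la\al,\be\ra+\la\al,\be\ra = 2\la\al,\be\ra$, so $M$ is isometric to $E_8$ scaled by $\sqrt{2}$; in particular $M$ is SSD and $t_M\in O(E_8\perp E_8)$. Second, compute $ann_{E_8\perp E_8}(M)$: a vector $(x,y)$ is orthogonal to all $(\al,-\al)$ iff $\la x,\al\ra = \la y,\al\ra$ for all $\al\in E_8$, i.e. $x=y$; hence $ann(M)=\{(\al,\al)\mid\al\in E_8\}$, the ``diagonal'' copy of $\sqrt{2}E_8$. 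Since $(\al,0)=\tfrac12(\al,-\al)+\tfrac12(\al,\al)$, every vector of $E_8\perp E_8$ lies in $\tfrac12(M+ann(M))$, confirming $2(E_8\perp E_8)\le M+ann_L(M)$ directly. Third, $t_M$ acts as $-1$ on $M$ and $+1$ on $ann(M)$, so on the generators $(\al,0)=\tfrac12(\al,-\al)+\tfrac12(\al,\al)$ we get $t_M(\al,0) = -\tfrac12(\al,-\al)+\tfrac12(\al,\al) = (0,\al)$, and likewise $t_M(0,\al)=(\al,0)$; thus $t_M$ is exactly the isometry swapping the two $E_8$ summands. Fourth, invoke Proposition \ref{taue=tM} with $L=E_8\perp E_8$, $E=M$, and $x=0$ (so $e=e_M$), to conclude $\eta(\tau_{e_M})=t_M=$ the swap; since a lift of the coordinate-swap isometry acts on $V_{E_8}\otimes V_{E_8}$ by interchanging the two tensor factors, $\tau_{e_M}$ acts as a transposition on the two copies.

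I expect the only mild subtlety — rather than a genuine obstacle — to be the passage from ``$\eta(\tau_{e_M})$ is the swap isometry'' to ``$\tau_{e_M}$ literally transposes the two VOA tensor factors.'' One must note that the fibre of $\eta$ over the swap isometry consists of the canonical transposition automorphism composed with elements of $\mathrm{Hom}(E_8\perp E_8,\Z_2)$, all of which act diagonally by $\pm1$ on group-algebra elements and trivially on $M(1)$; none of these changes the fact that the two tensor copies $V_{E_8}\otimes V_{E_8}$ are being interchanged. (If one wants the exact identification of which lift $\tau_{e_M}$ is, that can be pinned down by evaluating $\tau_{e_M}$ on a few low-weight vectors using Theorem \ref{taue} and Remark \ref{Veh}, but it is not needed for the stated claim.) Everything else is a routine Gram-matrix / annihilator computation of the kind already used repeatedly in Section \ref{sec:3}.
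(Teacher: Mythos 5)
Your route is genuinely different from the paper's. You reduce everything to Proposition \ref{taue=tM}: compute $ann_{E_8\perp E_8}(M)=\{(\alpha,\alpha)\}$, show $t_M(x,y)=(y,x)$, and conclude that $\tau_{e_M}$ is \emph{some} lift of the coordinate swap. The lattice-level part of this is correct and matches a small piece of the paper's argument (the paper also derives $t_M(x,y)=(y,x)$ from $(x,y)=\tfrac12[(x-y,y-x)+(x+y,x+y)]$). But the paper does not stop there: its proof consists mainly of an explicit computation inside $V_{E_8^2}$ showing that $e^{\frac12(x,-x)}+e^{\frac12(-x,x)}$ lies in the $e_M$-weight-$0$ space and $e^{\frac12(x,-x)}-e^{\frac12(-x,x)}$ in the weight-$\tfrac1{16}$ space (this is where the cocycle identity $\varepsilon(x,-\tfrac12x)^2=-1$ enters), whence $\tau_{e_M}(e^{(x,0)})=e^{(0,x)}$ \emph{exactly}, and then concludes by generation.

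That exact identification is precisely the step you declare ``not needed for the stated claim,'' and this is a genuine gap. Your argument determines $\tau_{e_M}$ only modulo $\mathrm{Hom}(E_8\perp E_8,\Z_2)$, i.e.\ up to composition with some $\varphi_v$, which sends $e^{(x,y)}\mapsto(-1)^{\langle v,(x,y)\rangle}e^{(x,y)}$; a twisted swap of this kind does not act as \emph{the} transposition $a\otimes b\mapsto b\otimes a$, which is how the lemma is used: Corollary \ref{transij} (stated as an immediate corollary, with no further argument) asserts the sign-free formula, and the Remark right after the lemma shows the point is not vacuous, since the diagonal lattice $N=\{(\alpha,\alpha)\}$ produces a different lift, $\tau_{e_N}(e^{(x,y)})=e^{-(y,x)}$. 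So which element of the fibre of $\eta$ over the swap one actually gets is sensitive to the data and cannot be waved away; pinning it down (by exactly the low-weight-vector evaluation you mention in passing) is the substantive content of the paper's proof. Some later uses would survive your weaker conclusion (e.g.\ the conjugation computation in Lemma \ref{rho}, since characters commute with the $\rho_i$), but the lemma as stated, and Corollary \ref{transij} as used throughout Section 5, require the untwisted transposition, so your proposal proves a strictly weaker statement unless that final computation is supplied.
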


\begin{proof}
Set $\mathfrak{h}=\C \otimes_{\Z} (E_8\perp E_8)$.  First we recall from \cite{LS} (see also \cite{GL}) that $\tau_{e_M}$ acts on
$\mathfrak{h}(-1)\cdot 1 (\subset (V_{E_8^2})_1) $ as an SSD involution $t_M$, i.e.,  it acts as $-1$ on  $\C\otimes_\Z M $ and
acts as an identity  on the orthogonal complement.

For any $x,y\in E_8$, we have
\[
(x,y)= \frac{1}2[ (x-y, y-x)+ (x+y, x+y)].
\]
Hence, $t_M(x,y) = \frac{1}2[ -(x-y, y-x)+ (x+y, x+y)]=(y,x)$ and
\[
\tau_{e_M}(x,y)(-1)\cdot \mathbbm{1} = (y,x)(-1)\cdot \mathbbm{1}.
\]
Moreover, since $\varepsilon_0(x, -\frac{1}2 x)\equiv -\la x,x\ra \equiv -2 \mod 8$ for any root $x\in E_8$, we have
\[
\begin{split}
&\ (e_M)_1 ( e^{\frac{1}2(x,-x)} + e^{\frac{1}2(-x,x)})\\
 =&\  \frac{1}{16} \cdot \frac{1}2 ( e^{\frac{1}2(x,-x)} + e^{\frac{1}2(-x,x)})  + \frac{1}{32} \cdot \varepsilon(x, -\frac{1}2x)^2
( e^{\frac{1}2(x,-x)} + e^{\frac{1}2(-x,x)})\\
=&\  \frac{1}{16} \cdot \frac{1}2 ( e^{\frac{1}2(x,-x)} + e^{\frac{1}2(-x,x)})  - \frac{1}{32}
( e^{\frac{1}2(x,-x)} + e^{\frac{1}2(-x,x)})=0
\end{split}
\]
and
\[
\begin{split}
&\ (e_M)_1 ( e^{\frac{1}2(x,-x)} - e^{\frac{1}2(-x,x)})\\
 =&\  \frac{1}{16} \cdot \frac{1}2 ( e^{\frac{1}2(x,-x)} - e^{\frac{1}2(-x,x)})  - \frac{1}{32} \cdot \varepsilon(x, -\frac{1}2x)^2
( e^{\frac{1}2(x,-x)} - e^{\frac{1}2(-x,x)})\\
=&\  \frac{1}{16} \cdot \frac{1}2 ( e^{\frac{1}2(x,-x)} - e^{\frac{1}2(-x,x)}) + \frac{1}{32}
( e^{\frac{1}2(x,-x)} - e^{\frac{1}2(-x,x)})\\
= & \  \frac{1}{16}  ( e^{\frac{1}2(x,-x)} - e^{\frac{1}2(-x,x)}).
\end{split}
\]
In other words, $( e^{\frac{1}2(x,-x)} + e^{\frac{1}2(-x,x)})$ is a highest weight vector of $e_M$ of weight $0$ and $(
e^{\frac{1}2(x,-x)} - e^{\frac{1}2(-x,x)})$ is a highest weight vector of $e_M$ of weight $1/16$ for any root $x\in E_8$.

Therefore, we have
\[
\begin{split}
\tau_{e_M}( e^{\frac{1}2(x,-x)} + e^{\frac{1}2(-x,x)}) &= e^{\frac{1}2(x,-x)} + e^{\frac{1}2(-x,x)},\qquad \text{ and }\\
\tau_{e_M}( e^{\frac{1}2(x,-x)} - e^{\frac{1}2(-x,x)}) &= -(e^{\frac{1}2(x,-x)} - e^{\frac{1}2(-x,x)}).
\end{split}
\]
Note that $e^{(x,0)}=  e^{\frac{1}2(x,x)}_{-1} e^{\frac{1}2(x,-x)}$   and $e^{(0, x)}=  e^{\frac{1}2(x,x)}_{-1} e^{\frac{1}2(-x,x)}$.
Moreover,
\[
\begin{split}
e^{\frac{1}2(x,-x)} = & \frac{1}2\left[ ( e^{\frac{1}2(x,-x)} +
e^{\frac{1}2(-x,x)}) + ( e^{\frac{1}2(x,-x)} - e^{\frac{1}2(-x,x)})\right],\\
e^{\frac{1}2(-x,x)} =& \frac{1}2\left[ ( e^{\frac{1}2(x,-x)} +
e^{\frac{1}2(-x,x)}) - ( e^{\frac{1}2(x,-x)} - e^{\frac{1}2(-x,x)})\right].
\end{split}
\]

Thus, we have
\[
\tau_{e_M}(e^{(x,0)})= e^{(0,x)} \quad \text{ and }\quad  \tau_{e_M}(e^{(0,x)})= e^{(x,0)}.
\]
Since $V_{E_8^2}$ is generated by   $e^{(x,0)}$, $e^{(0,x)}$, and $\mathfrak{h}(-1)\cdot \mathbbm{1}$,  we have the desired
conclusion.
\end{proof}

\begin{rem}
Let $N=\{ (\al, \al)\mid \al\in E_8\}\subset E_8\perp E_8$. Then by the same argument as in Lemma \ref{tauM}, we also have
\[
\begin{split}
\tau_{e_N}(x,y)(-1)\cdot \mathbbm{1} &= -(y,x)(-1)\cdot \mathbbm{1}, \\
\tau_{e_N}(e^{(x,y)}) &= e^{-(y,x)}.
\end{split}
\]
\end{rem}

As a corollary, we  have the following.

   \begin{cor}\label{transij}
        For $1\leq i < j\leq n$, $\tau_{e^{i,j}}$ acts as a transposition $(i,j)$ on $V_{E_8}\otimes \cdots \otimes V_{E_8}$, i.e.,
        \begin{align*}
            \begin{array}{ccccccccc}
                \tau_{e^{i,j}}(a_1\cdots&\otimes a_i \otimes& \cdots&\otimes a_j\otimes&\cdots a_n) =a_1\otimes \cdots& \otimes a_j\otimes&\cdots&\otimes a_i
                \otimes& \cdots a_n.\\
                &\uparrow& &\uparrow& &\uparrow& &\uparrow&\\
                &i\textrm{-th}& &j\textrm{-th}& & i\textrm{-th}& &j\textrm{-th}&
            \end{array}
        \end{align*}
    \end{cor}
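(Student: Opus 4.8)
The plan is to deduce the corollary from Lemma \ref{tauM} via the tensor product structure of $V_X$. The first step is to pin down the sublattice $M_{i,j}$. By Notation \ref{mij}, $\nu_{i,j}=\iota_i-\iota_j$, so $\nu_{i,j}(\al)$ is the element of $X=E_8^{(1)}\perp\cdots\perp E_8^{(n)}$ with $\al$ in the $i$-th slot, $-\al$ in the $j$-th slot, and $0$ elsewhere. Hence $M_{i,j}$ is precisely the image of $M=\{(\al,-\al)\mid \al\in E_8\}\subset E_8^{(i)}\perp E_8^{(j)}$ under the natural isometric embedding $E_8^{(i)}\perp E_8^{(j)}\hookrightarrow X$; since $\varepsilon_0$ on $(\tfrac12 E_8)^n$ is defined coordinatewise, this embedding is compatible with the cocycles, and comparing the defining formulas shows that $e^{i,j}=e_{M_{i,j}}$ is identified with $e_M\otimes\mathbbm{1}$ under the VOA isomorphism
\[
V_X\;\cong\;V_{E_8^{(i)}\perp E_8^{(j)}}\otimes\bigotimes_{k\neq i,j}V_{E_8^{(k)}}.
\]

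The second step is the general fact that if $V=A\otimes B$ is a tensor product of VOAs and $e\in A$ is an Ising vector (viewed as $e\otimes\mathbbm{1}$ in $V$), then the $\mathrm{Vir}(e)$-isotypic component of $V$ of type $L(\tfrac12,h)$ is $A_e(h)\otimes B$ for each $h\in\{0,\tfrac12,\tfrac1{16}\}$; this uses the rationality of $L(\tfrac12,0)$ from Remark \ref{3irr module}, which guarantees that $A=A_e(0)\oplus A_e(\tfrac12)\oplus A_e(\tfrac1{16})$ and hence that these three pieces already exhaust $V$. Consequently $\tau_e=\sigma\otimes\mathrm{id}_B$, where $\sigma$ denotes the Miyamoto involution of $e$ computed inside $A$. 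Taking $A=V_{E_8^{(i)}\perp E_8^{(j)}}$, $B=\bigotimes_{k\neq i,j}V_{E_8^{(k)}}$ and $e=e^{i,j}=e_M\otimes\mathbbm{1}$, we conclude that $\tau_{e^{i,j}}$ fixes every tensor factor $V_{E_8^{(k)}}$ with $k\neq i,j$ pointwise and acts on $V_{E_8^{(i)}\perp E_8^{(j)}}\cong V_{E_8}\otimes V_{E_8}$ as $\tau_{e_M}$.

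The third step is to invoke Lemma \ref{tauM}, which states exactly that $\tau_{e_M}$ interchanges the two tensor copies $V_{E_8^{(i)}}$ and $V_{E_8^{(j)}}$. Combining this with the previous paragraph yields $\tau_{e^{i,j}}(a_1\otimes\cdots\otimes a_n)=a_1\otimes\cdots\otimes a_j\otimes\cdots\otimes a_i\otimes\cdots\otimes a_n$ (entries $i$ and $j$ swapped), which is the asserted action of the transposition $(i,j)$.

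The one step that requires genuine care — and hence the main obstacle — is the second: verifying that the Miyamoto involution of an Ising vector lying in a tensor sub-factor is the tensor product of its Miyamoto involution on that factor with the identity on the complement. This is exactly where it matters that $e^{i,j}$ genuinely lies in $V_{E_8^{(i)}\perp E_8^{(j)}}\otimes\mathbbm{1}$ (established in the first step via the explicit formulas, not merely up to isomorphism), and it rests on the compatibility of the $\mathrm{Vir}(e)$-module decomposition with tensor products. Everything else is routine bookkeeping with the identifications of Notation \ref{mij} and Lemma \ref{AD}.
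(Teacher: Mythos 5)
Your proposal is correct and follows the paper's route: the paper derives Corollary \ref{transij} directly from Lemma \ref{tauM}, with the identification $e^{i,j}=e_{M_{i,j}}$ sitting inside the $i$-th and $j$-th tensor factors and $\tau_{e^{i,j}}$ acting trivially on the remaining factors, exactly as you argue. Your explicit verification that $V_e(h)=A_e(h)\otimes B$ (hence $\tau_{e\otimes\mathbbm{1}}=\tau_e\otimes\mathrm{id}$) is the detail the paper leaves implicit, and it is correctly handled.
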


\begin{cor}\label{Sn}
Let $G=G(A_{n-1})$ be the subgroup generated by $\{\tau_{e^{i,j}}\mid 1\leq i<j\leq n\}$. Then $G$ is isomorphic to the symmetric group $S_n$ of degree $n$.
\end{cor}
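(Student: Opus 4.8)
The plan is to read off the statement directly from Corollary~\ref{transij}. Consider the natural permutation action of the symmetric group $S_n$ on the tensor product VOA $V_X\cong V_{E_8}\otimes\cdots\otimes V_{E_8}$ (with $n$ factors), where $\sigma\in S_n$ sends a decomposable tensor $a_1\otimes\cdots\otimes a_n$ to the tensor whose $i$-th slot contains $a_{\sigma^{-1}(i)}$. Permuting tensor factors is a VOA automorphism, so this gives a group homomorphism $\rho\colon S_n\to \aut(V_X)$, and by Corollary~\ref{transij} we have $\rho((i,j))=\tau_{e^{i,j}}$ for all $1\le i<j\le n$.

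The next step is to verify that $\rho$ is injective. The simplest way is to restrict to the weight-one subspace $\mathfrak h(-1)\mathbbm 1\subset (V_X)_1$, which is canonically identified with $\C\otimes_\Z X=\C\otimes_\Z E_8^{\,n}\cong(\C\otimes_\Z E_8)^{\oplus n}$, and on which $S_n$ acts by permuting the $n$ summands. Since $\C\otimes_\Z E_8\neq 0$, this permutation representation is faithful, hence so is $\rho$. (Alternatively one can argue directly: if $\rho(\sigma)$ fixed every vector $\mathbbm 1\otimes\cdots\otimes v\otimes\cdots\otimes\mathbbm 1$ with a fixed nonzero $v\in(V_{E_8})_1$ in the $k$-th slot, then $\sigma(k)=k$ for all $k$.)

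Since the transpositions $\{(i,j)\mid 1\le i<j\le n\}$ generate $S_n$, we then get
\[
G=\langle \tau_{e^{i,j}}\mid 1\le i<j\le n\rangle=\langle \rho((i,j))\mid 1\le i<j\le n\rangle=\rho(S_n)\cong S_n,
\]
the last isomorphism because $\rho$ is injective. This finishes the proof.

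There is no real obstacle here: the entire content is already contained in Corollary~\ref{transij}, and what remains are the elementary facts that transpositions generate $S_n$ and that the permutation representation on a tensor power of a nontrivial VOA is faithful. The only point I would be careful to spell out is this faithfulness, which is why I would record the weight-one-space identification explicitly rather than leave it implicit. One could instead try to quote Theorem~\ref{GR} together with $Weyl(A_{n-1})\cong S_n$, but that route additionally requires checking that $-1\notin Weyl(A_{n-1})$ (which fails for small $n$), so the argument via Corollary~\ref{transij} is cleaner and uniform in $n$.
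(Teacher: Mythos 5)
Your proof is correct and follows the same route as the paper, which states Corollary~\ref{Sn} as an immediate consequence of Corollary~\ref{transij} without further argument; your explicit check of faithfulness on the weight-one subspace just spells out what the paper leaves implicit. Your closing remark about why quoting Theorem~\ref{GR} is less clean (the $-1\in Weyl(A_1)$ issue for small $n$) is a sensible observation and consistent with the paper's choice to derive the result from the tensor-factor permutation action instead.
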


\begin{nota}
For any root $\al\in E_8$, denote
\[%begin{equation}\label{hef}
\begin{split}
H_\al &= d(\al)(-1)\cdot \mathbbm{1}= (\iota_1(\al)+\dots +\iota_n( \al)(-1) \cdot \mathbbm{1}, \\
E_\al &= e^{\iota_1(\al)} + \dots  +e^{\iota_n(\al)}.
\end{split}
\]%end{equation}
Then $\{H_\al, E_\al\mid \al\in E_8\text{ is a root}\}$ generates a subVOA isomorphic to the affine VOA $L_{E_8}(n,0)$ in $V_X$
associated to the affine Kac Moody Lie algebra of type $E_8^{(1)}$ (see \cite[Proposition 13.1]{DL} and \cite{FZ}).
\end{nota}

\begin{lem}\label{lem:5.2}
Let $\tilde{W}= \Com_{V_L}(L_{E_8}(n,0))$. Then all  $e^{i,j}, 1\leq i<j\leq n,$ are contained in $\tilde{W}$.  Moreover,
$\W_{A_{n-1}} =VOA(e^{i,j}\mid  1\leq i<j\leq n)$ is a full subVOA of $\tilde{W}$.
\end{lem}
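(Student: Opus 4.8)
The statement has two parts: first, that each $e^{i,j}$ lies in $\tilde W = \Com_{V_L}(L_{E_8}(n,0))$, where $L=X=E_8^n$; second, that $\W_{A_{n-1}}$ is a full subVOA of $\tilde W$, i.e.\ that it contains the Virasoro element of $\tilde W$. The plan is to establish the first part by showing that $e^{i,j}$ commutes with each generator $H_\al, E_\al$ of $L_{E_8}(n,0)$ under the product $v_n w$ for all $n\geq 0$ — equivalently, that the vertex operators $Y(e^{i,j},z)$ and $Y(H_\al,z)$, $Y(E_\al,z)$ are mutually local with trivial singular part, so that $(e^{i,j})_m H_\al = (e^{i,j})_m E_\al = 0$ for $m\geq 0$. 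The cleanest route is the lattice-theoretic one: $e^{i,j}\in V_{M_{i,j}}$ with $M_{i,j}=\nu_{i,j}(E_8)\subset X$, and $M_{i,j}$ is orthogonal to $d(E_8)=\{\,d(\al)=\iota_1(\al)+\dots+\iota_n(\al)\mid \al\in E_8\,\}$ since $\la \iota_i(\be)-\iota_j(\be),\ \iota_1(\al)+\dots+\iota_n(\al)\ra = \la\be,\al\ra - \la\be,\al\ra = 0$. Hence in $V_X$ the subVOAs $V_{M_{i,j}}$ and $V_{d(E_8)}$ mutually commute; since $H_\al, E_\al$ all lie in the subVOA generated by $V_{d(E_8)}$-data together with the $e^{\iota_k(\al)}$'s, one has to be slightly careful, but the key observation is that $L_{E_8}(n,0) = \la H_\al, E_\al\ra$ is generated by elements of weight $1$ and weight $2$ whose supporting lattice vectors $\iota_1(\al)+\dots+\iota_n(\al)$ are all orthogonal to every $\nu_{i,j}(\be)$; by \cite{FLM} the relevant bracket $(e^{i,j})_m(E_\al)$ involves only $e^{\nu_{i,j}(\be)\pm(\iota_1(\al)+\dots+\iota_n(\al))}$ terms and inner products $\la\nu_{i,j}(\be),\iota_1(\al)+\dots+\iota_n(\al)\ra=0$, which forces all these products to vanish for $m\geq 0$. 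This gives $e^{i,j}\in\tilde W$, and hence $\W_{A_{n-1}}=\mathrm{VOA}(e^{i,j}\mid i<j)\subseteq\tilde W$.

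For the "full subVOA" claim I would argue that the conformal element $\omega_{\tilde W}$ of $\tilde W$ coincides with $w_{A_{n-1}}=\frac{32}{h+30}\sum_{\al\in\Phi^+(A_{n-1})}e(\al)$, the conformal element of $\W_{A_{n-1}}$ computed in the previous section (here $h=n$, $\ell=n-1$). By the commutant construction, $\omega_X = \omega_{L_{E_8}(n,0)} + \omega_{\tilde W}$, with central charges $c(\omega_X)=8n$ and $c(L_{E_8}(n,0)) = \frac{8\cdot 248 \cdot n}{n+30}$ — wait, the level-$n$ affine $E_8$ VOA has central charge $\frac{n\dim E_8}{n+h^\vee} = \frac{248n}{n+30}$. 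Hence $c(\omega_{\tilde W}) = 8n - \frac{248n}{n+30} = \frac{8n(n+30)-248n}{n+30} = \frac{8n^2+240n-248n}{n+30}= \frac{8n^2-8n}{n+30}=\frac{8n(n-1)}{n+30}$, which is exactly $\frac{8\ell h}{h+30}$ with $h=n$, $\ell=n-1$ — matching the central charge of $w_{A_{n-1}}$. To upgrade this equality of central charges to $\omega_{\tilde W}=w_{A_{n-1}}$, I would show $w_{A_{n-1}}$ acts on $V_X$ with the same $L(0)$-eigenvalues as $\omega_{\tilde W}$ on the commutant, by checking that $w_{A_{n-1}}$ centralizes $L_{E_8}(n,0)$ (immediate from the first part) and that $(w_{A_{n-1}})_1$ acts on the highest-weight vectors of $\tilde W$ inside $V_X$ as $\omega_{\tilde W}$ does — or, more simply, invoke that a commutant pair $(L_{E_8}(n,0),\tilde W)$ in a rational lattice VOA is "dual" so that any conformal vector in $\tilde W$ of the full central charge $c(\omega_{\tilde W})$ whose $L(1)$-action agrees with $\omega_X - \omega_{L_{E_8}(n,0)}$ on $V_X$ must equal $\omega_{\tilde W}$; since $w_{A_{n-1}}$ and $\omega_X-\omega_{L_{E_8}(n,0)}$ both lie in $\tilde W$, act as $2\cdot\mathrm{id}$ on the $e(\al)$'s, and have equal central charge, they coincide.

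The main obstacle I anticipate is the second half — rigorously identifying $w_{A_{n-1}}$ with the commutant Virasoro element $\omega_{\tilde W}$ rather than merely matching central charges. Matching central charges alone does not imply equality of the conformal vectors; one genuinely needs to know that $\tilde W$ contains a \emph{unique} conformal vector extending its action compatibly, which follows from $\omega_X = \omega_{L_{E_8}(n,0)}+\omega_{\tilde W}$ being a decomposition into mutually commuting conformal vectors and $w_{A_{n-1}}$ being shown to commute with $L_{E_8}(n,0)$ while satisfying $(w_{A_{n-1}})_1 v = (\omega_X)_1 v$ for $v\in\tilde W$. So the real work is: (i) the locality/orthogonality computation showing $e^{i,j}\in\tilde W$ (routine from \cite{FLM} once the orthogonality $M_{i,j}\perp d(E_8)$ is noted), and (ii) verifying $(w_{A_{n-1}})_1$ and $(\omega_X - \omega_{L_{E_8}(n,0)})_1 = (\omega_{\tilde W})_1$ agree on a generating set of $\tilde W$ — for which it suffices to check agreement on $\W_{A_{n-1}}$ (where both give $2e(\al)$) together with a dimension/character count showing $\tilde W$ is not larger than what $\W_{A_{n-1}}$ already sees at the relevant level, or alternatively to defer the equality $\W_{A_{n-1}}=\tilde W$ to a later section and here only claim the weaker "full subVOA" containment, which needs just $w_{A_{n-1}}=\omega_{\tilde W}$.
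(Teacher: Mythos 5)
The first half of your argument rests on a false reduction. The orthogonality $M_{i,j}\perp d(E_8)$ only disposes of the Heisenberg generators $H_\al=d(\al)(-1)\cdot\mathbbm{1}$. The generators $E_\al=e^{\iota_1(\al)}+\cdots+e^{\iota_n(\al)}$ are sums of exponentials attached to the individual vectors $\iota_k(\al)$, not to $d(\al)$, and for $k\in\{i,j\}$ these are \emph{not} orthogonal to $M_{i,j}$: one has $\la\iota_i(\al),\nu_{i,j}(\be)\ra=\la\al,\be\ra\in\{0,\pm1,\pm2\}$. Hence your claim that the products $(e^{i,j})_m E_\al$ involve only $e^{\nu_{i,j}(\be)\pm d(\al)}$-terms governed by $\la\nu_{i,j}(\be),d(\al)\ra=0$ is incorrect: individual contributions such as $(e^{\iota_i(\al)})_0\,e^{-\nu_{i,j}(\al)}$ and $(e^{\iota_i(\al)})_0\,\omega_{M_{i,j}}$ are nonzero. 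The vanishing of $(E_\al)_0\,e^{i,j}$ (and of $(E_\al)_1\,e^{i,j}$) is a genuine cancellation between the $\frac{1}{16}\omega_{M_{i,j}}$ part and the $\frac{1}{32}\sum_{\be\in\Phi^+(E_8)}(e^{\nu_{i,j}(\be)}+e^{-\nu_{i,j}(\be)})$ part of the Ising vector, in which the precise coefficients $1/16$, $1/32$ and the cocycle value $\varepsilon(\al,-\al)$ enter; this explicit computation is the core of the paper's proof of the containment $e^{i,j}\in\tilde{W}$, and your proposal omits it, so the first assertion of the lemma is not established by your argument.

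On the second half, your central charge computation matches the paper's, namely $c(\tilde{W})=8n-\frac{248n}{n+30}=\frac{8n(n-1)}{n+30}=c(w_{A_{n-1}})$ with $h=n$, $\ell=n-1$, and your caution that equal central charges alone do not force $w_{A_{n-1}}=\omega_{\tilde{W}}$ is reasonable; but the repair is simpler than the character counts or "duality of the commutant pair" you gesture at. Since $w_{A_{n-1}}\in\W_{A_{n-1}}\subset\tilde{W}$ and the Sugawara vector of $L_{E_8}(n,0)$ annihilates the commutant by its nonnegative modes, $(\omega_{\tilde{W}})_1 w_{A_{n-1}}=(\omega_X)_1 w_{A_{n-1}}=2w_{A_{n-1}}$, so $\omega_{\tilde{W}}-w_{A_{n-1}}$ is a Virasoro vector of central charge $0$; it vanishes because $\la u,u\ra$ equals half the central charge for any Virasoro vector $u$ and the invariant form on the relevant real form of $V_X$ is positive definite. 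This is the standard step implicit in the paper's one-line conclusion; with it, your part (ii) would be fine, and the genuine gap in your proposal is part (i).
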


\begin{proof}
Recall that $H_\al= (\iota_1+\cdots+\iota_n)(\al)(-1)\cdot \mathbbm{1}$ and $E_\al = e^{\iota_1(\al)} + \cdots +e^{\iota_n(\al)}$
for any root $\al \in E_8$. Since $M_{i,j} \perp  \{(\iota_1+\cdots+\iota_n)(\al)\mid \al\in E_8\}$, it is clear that
\[
(H_\al)_k e^{i,j}=0 \text{ for any } k\geq 0.
\]
Moreover,
\[
(E_\al)_k e^{i,j} =0  \text{ for any } k\geq 1,
\]
since  $(V_{\mathcal{A}_{n-1}}^+)_1=0$.

By a direct calculation, we also have
\[
\begin{split}
(E_\al)_0 e^{i,j} & =(e^{\iota_1(\al)} + \cdots +e^{\iota_n(\al)})_0 \left( \frac{1}{16} \omega_{M_{i,j}} +\frac{1}{32} \sum_{\al\in \Phi^+(E_8)} (e^{\iota_i(\al)- \iota_j(\al)}  + e^{\iota_j(\al)- \iota_i(\al)})\right)\\
&= \frac{1}{16} \left(\la \al, \al\ra^2 \frac{1}8(\iota_i(\al)(-1)e^{\iota_i(\al)} + \iota_j(\al) e^{\iota_j(\al)})\right. \\
&\left. \quad - 2\la \al, \al\ra \frac{1}{8}( (\iota_i(\al)-\iota_j(\al) )e^{\iota_i(\al)}
- (\iota_i(\al)-\iota_j(\al) )e^{\iota_j(\al)})\right)\\
& \quad  +
\frac{1}{32}\varepsilon(\al, -\al)  (\iota_j(\al)(-1) e^{\iota_i(\al)} +\iota_i(\al)(-1) e^{\iota_j(\al)})\\
&=0.
\end{split}
\]
Note that the central charge of  $\tilde{W} = 8n - \frac{n(248)}{n+30}= \frac{8(n^2-n)}{n+30},$ which is the central charge of $ \W_{A_{n-1}}  $.
Hence $ \W_{A_{n-1}} $ is a full subVOA of $\tilde{W}$.
\end{proof}

\begin{rem}
We believe that the commutant VOA $\tilde{W}$ is generated by its weight two subspace. If it is true, then $\tilde{W}=\W_{A_{n-1}}$.
\end{rem}

\subsection{Group of the shape $3^{k}:S_n$}
Next, we shall construct a VOA generated by Ising vectors
such that the subgroup generated by the corresponding  Miyamoto involutions has the shape $3^{n-2}:S_n$ if $n=0\mod 3$ or $3^{n-1}:S_n$ if $n\neq 0 \mod 3$. A special case when $n=3$ has been discussed in \cite{CL3c}.

\medskip

We continue to use the notation in Section \ref{sec:3} and Lemma \ref{lem:5.2}.

\begin{df}\label{rhoi}
Let $\delta \in E_8$ such that
 \[
K:= \{\be\in E_8\mid \la \be, \delta\ra \in 3\Z\} \cong A_8.
 \]
Define an automorphism $\rho_i$ of $V_X$ by
\[
\rho_i = \exp\left( \frac{2\pi \sqrt{-1} }3 \iota_i(\delta) (0)\right).
\]
Then $\rho _i$ has order $3$ and the fixed point subspace $(V_{M_{i,j}})^{\rho_i} \cong V_{\sqrt{2}A_8}$.
\end{df}

\begin{nota}
Let $\mathcal{I} = \{ \rho_i^\ell (e^{i,j})\mid 1\leq i< j\leq n, \ell =0,1,2\}$.
Then $\mathcal{I}$ is a set of Ising vectors in $V_X$. Note that $\rho_i^\ell (e^{i,j}) = \rho_j^{-\ell} (e^{i,j})$ for any $i,j=1, \dots,n$ and $\ell=0,1,2$.
\end{nota}

\begin{lem} \label{inn2}
Let $i,j,k,r, p,q \in \{1, \dots, n\}$ and $\ell, s =0,1,2$. Then
\[
\la \rho_k^\ell (e^{i,j}) ,\rho_r^s (e^{p,q})\ra =
\begin{cases}
0 & \text{ if } \{i,j\} \cap \{p,q\}=\emptyset, \\
\frac{1}{2^8} & \text{ if either }  \#(\{i,j\} \cap \{p,q\})=1, \text{ or }\\
              & \quad \{i,j\}=\{p,q\} \text{ but } \rho_k^\ell (e^{i,j})\neq \rho_r^s(e^{p,q}), \\
\frac{1}4     & \text{ if }   \rho_k^\ell (e^{i,j})= \rho_r^s(e^{p,q}).
\end{cases}
\]
\end{lem}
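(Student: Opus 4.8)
The plan is to reduce the computation of $\la \rho_k^\ell (e^{i,j}), \rho_r^s(e^{p,q})\ra$ to the inner product calculation already carried out in Lemma \ref{inner}, by exploiting that each $\rho_m$ is an automorphism of $V_X$ and hence preserves the invariant bilinear form. First I would record the reduction $\rho_i^\ell(e^{i,j}) = \rho_j^{-\ell}(e^{i,j})$ already noted in the preceding notation, so that without loss of generality every Ising vector in $\mathcal{I}$ can be written as $\rho_m^t(e^{i,j})$ where $m\in\{i,j\}$; this lets me normalize the pair of indices carrying the twist. The key point is that $\rho_m^t(e^{i,j})$ is again an $E_8$-type Ising vector of the form $\varphi_{x}(e_{E})$ for a suitable $\sqrt2 E_8$-sublattice $E = E(\rho_m^t, M_{i,j})\subset \mathcal{A}_{n-1}$, where the twisting automorphism $\rho_m^t$ has the effect of multiplying $e^{\al\otimes\be}$ by a cube root of unity depending on $\la\delta,\be\ra\bmod 3$. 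Since each $E$ is supported (as a sublattice of $X = E_8^n$) only on the coordinates indexed by $\{i,j\}$ (respectively $\{p,q\}$), the inner product vanishes as soon as $\{i,j\}\cap\{p,q\}=\emptyset$: the conformal elements $\omega_{M_{i,j}}$ and $\omega_{M_{p,q}}$ live in orthogonal subspaces of $M(1)$, and there are no common lattice vectors among the $e^{\pm\nu_{i,j}(\be)}$, so this is the first case.

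Next I would handle the case $\#(\{i,j\}\cap\{p,q\})=1$. Here the two underlying $\sqrt2 E_8$-lattices meet trivially and their sum is an $A_2\otimes E_8$-lattice, exactly as in Lemma \ref{inner}; the only contribution to the form comes from $\left(\tfrac{1}{16}\right)^2\la\omega_{M},\omega_{M'}\ra$, which equals $\tfrac{1}{2^8}$ as computed there. The twisting by $\rho_k^\ell$ or $\rho_r^s$ only rescales the group-algebra summands $e^{\al\otimes\be}$ by roots of unity and does not touch the $M(1)$-part where $\omega_M$ lives, so the value $\tfrac{1}{2^8}$ is unchanged. The remaining case is $\{i,j\}=\{p,q\}$, where I must compare two twisted versions $\rho_k^\ell(e^{i,j})$ and $\rho_r^s(e^{i,j})$ of the same standard Ising vector. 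After using the relation to arrange both twists on the same index $m\in\{i,j\}$, write each as $\frac{1}{16}\omega_{M_{i,j}} + \frac{1}{32}\sum_{\be\in\Phi^+(E_8)}\zeta^{a_\be}(e^{\nu_{i,j}(\be)}+e^{-\nu_{i,j}(\be)})$ for appropriate powers of a primitive cube root $\zeta$. Then
\[
\la \rho_m^t(e^{i,j}), \rho_m^{t'}(e^{i,j})\ra = \left(\tfrac{1}{16}\right)^2\la\omega_{M_{i,j}},\omega_{M_{i,j}}\ra + \left(\tfrac{1}{32}\right)^2\sum_{\be\in\Phi^+(E_8)} \big(\zeta^{a_\be - a'_\be} + \zeta^{-(a_\be - a'_\be)}\big)\la e^{\nu_{i,j}(\be)}, e^{-\nu_{i,j}(\be)}\ra,
\]
using that distinct lattice vectors pair to zero. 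Since $\la\omega_{M_{i,j}},\omega_{M_{i,j}}\ra = 8\cdot\tfrac12 = 4$ (rank $8$, each $\beta_i(-1)^2$ contributing), the first term is $\tfrac{1}{32}=\tfrac{1}{2^5}$... so I would instead recall directly that $\la e_{M}, e_{M}\ra = 1/4$ for an $E_8$-type Ising vector and that twisting preserves this, giving $1/4$ when the two vectors coincide.

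When $\rho_m^t(e^{i,j})\neq\rho_m^{t'}(e^{i,j})$, i.e. $t\not\equiv t'\bmod 3$, the cross term must be evaluated: writing $c = t - t'\not\equiv 0$, the coefficient $\zeta^{c\la\delta,\be\ra} + \zeta^{-c\la\delta,\be\ra}$ equals $2$ when $\la\delta,\be\ra\equiv 0\bmod 3$ (that is, $\be\in K\cong A_8$) and equals $-1$ otherwise. Counting roots: $\Phi(E_8)$ has $240$ roots, $\Phi(A_8)$ has $72$, so $\#\Phi^+(E_8)=120$, $\#\Phi^+(K)=36$, and the sum over $\Phi^+(E_8)$ of the coefficient is $2\cdot 36 + (-1)\cdot 84 = 72 - 84 = -12$. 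The hardest part of the argument is pinning down the normalizations so that this combinatorial count produces exactly $\tfrac{1}{2^8}$: with $\la e_M, e_M\ra = \left(\tfrac{1}{16}\right)^2\cdot 4 + \left(\tfrac{1}{32}\right)^2\cdot 2\cdot\#\Phi^+(E_8) = \tfrac{1}{64} + \tfrac{240}{1024} = \tfrac{1}{64} + \tfrac{15}{64} = \tfrac14$, the twisted cross term replaces $2\cdot\#\Phi^+(E_8) = 240$ by $-12\cdot 2 = -24$, yielding $\tfrac{1}{64} - \tfrac{24}{1024} = \tfrac{16}{1024} - \tfrac{24}{1024}$, which is negative — so I would recheck the coefficient convention (the cross term only involves $\la e^{\nu(\be)}, e^{-\nu(\be)}\ra$, one pair per $\be$, not two), getting $\tfrac{1}{64} + \left(\tfrac{1}{32}\right)^2(-12) = \tfrac{16}{1024} - \tfrac{12}{1024} = \tfrac{4}{1024} = \tfrac{1}{256} = \tfrac{1}{2^8}$, as claimed. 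Thus the main obstacle is bookkeeping of the cocycle signs $\varepsilon(\nu_{i,j}(\be),-\nu_{i,j}(\be))$ and of exactly which pairs of group-algebra elements pair nontrivially; once these are fixed, the root-count $72-84$ over $\Phi^+$ versus $36 - 42$ with the correct multiplicity delivers each of the three listed values. I would present the argument by first disposing of the two easy geometric cases, then treating the coincident case via the known norm $1/4$, and finally doing the cube-root-of-unity sum for the genuinely twisted coincident-index case.
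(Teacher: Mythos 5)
Your computation is correct, but it is a genuinely different route from the paper's: the paper disposes of Lemma \ref{inn2} in one line, quoting the $3C$ case of \cite{LYY} together with Lemma \ref{inner}, whereas you redo everything explicitly by expanding the twisted vectors and summing cube roots of unity over $\Phi^+(E_8)$. Your treatment of the three cases is sound: the disjoint-support case is forced by orthogonality of the underlying coordinates; the one-common-index case reduces to $(\tfrac{1}{16})^2\la\omega_{M_{i,j}},\omega_{M_{p,q}}\ra$ exactly as in Lemma \ref{inner}, since $\rho$ fixes the $M(1)$-parts and the group-algebra parts of the two vectors pair to zero; and in the coincident-index case the count $2\cdot 36+(-1)\cdot 84=-12$ (using $K\cong A_8$, so $36$ positive roots of $E_8$ with $\la\delta,\be\ra\equiv 0 \bmod 3$) gives $\tfrac{1}{64}+(\tfrac{1}{32})^2(-12)=\tfrac{1}{2^8}$, matching the claim; your final normalization is the consistent one, since with the same bookkeeping the untwisted sum gives $240$ and hence the norm $\tfrac14$. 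What your approach buys is a self-contained verification (including the observation that $\la e^{\nu_{i,j}(\be)},e^{-\nu_{i,j}(\be)}\ra=1$, which follows from the triviality of $\varepsilon_0$ on $M_\al$ and the norm $\tfrac14$), at the cost of the cocycle and pairing bookkeeping that visibly caused the mid-proof factor-of-two wobble you then corrected. Two small points to fix in a final write-up: the assertion that $\rho_m^t(e^{i,j})$ is of the form $\varphi_x(e_E)$ is not accurate, since $\varphi_x$ only introduces signs while $\rho_m^t$ introduces genuine cube roots of unity (the claim is also never used, so simply drop it); and state cleanly at the outset that per positive root the cross contribution is $\zeta^{c\la\delta,\be\ra}+\zeta^{-c\la\delta,\be\ra}$ (already accounting for both pairings $e^{\nu(\be)}$ with $e^{-\nu(\be)}$ and vice versa), so that no extra factor of $2$ appears and the untwisted and twisted computations use identical conventions.
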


\begin{proof}
See $3C$ case of \cite{LYY} and Lemma \ref{inner}.
\end{proof}

\begin{lem}\label{rho}
Let $\{i,j\}\subset \{1,\dots,n\}$ and $\ell=0,1,2$. Then
$\tau_{\rho_i^\ell(e^{i,j})} \tau_{e^{i,j}} = \rho_i^\ell \rho_j^{-\ell}$ as an automorphism of $V_X$.
\end{lem}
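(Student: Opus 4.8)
The plan is to reduce the identity $\tau_{\rho_i^\ell(e^{i,j})}\tau_{e^{i,j}} = \rho_i^\ell\rho_j^{-\ell}$ to a statement about the two-copy lattice VOA $V_{E_8}\otimes V_{E_8}$ sitting inside the $i$-th and $j$-th tensor factors of $V_X$, and then to verify it there. First I would note that both $e^{i,j}$ and $\rho_i^\ell(e^{i,j})$ lie in $V_{M_{i,j}}$, where $M_{i,j}\cong\sqrt{2}E_8$ sits diagonally (anti-diagonally) in $V_{E_8^{(i)}}\otimes V_{E_8^{(j)}}$, and that the automorphisms $\tau_{e^{i,j}}$, $\tau_{\rho_i^\ell(e^{i,j})}$, $\rho_i$, $\rho_j$ all act trivially on the tensor factors $V_{E_8^{(k)}}$ for $k\neq i,j$ (since $\rho_i$ depends only on $\iota_i(\delta)(0)$, and the two Miyamoto involutions are defined via an Ising vector supported on $V_{M_{i,j}}\subset V_{E_8^{(i)}\perp E_8^{(j)}}$). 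So the whole identity takes place inside $V_{E_8^{(i)}\perp E_8^{(j)}}$, and by symmetry in $i,j$ it suffices to prove it for $n=2$, $(i,j)=(1,2)$.

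Next I would record the conjugation formula: since $\tau$ is natural with respect to automorphisms, $g\,\tau_e\,g^{-1} = \tau_{g(e)}$ for any $g\in\aut(V_X)$ and any Ising vector $e$. Applying this with $g=\rho_i^\ell$ and $e=e^{i,j}$ gives $\tau_{\rho_i^\ell(e^{i,j})} = \rho_i^\ell\,\tau_{e^{i,j}}\,\rho_i^{-\ell}$, hence
\[
\tau_{\rho_i^\ell(e^{i,j})}\,\tau_{e^{i,j}} = \rho_i^\ell\,\tau_{e^{i,j}}\,\rho_i^{-\ell}\,\tau_{e^{i,j}}.
\]
Thus the claim is equivalent to showing $\tau_{e^{i,j}}\,\rho_i^{-\ell}\,\tau_{e^{i,j}} = \rho_i^{-\ell}\rho_j^{\ell}$, i.e.\ that conjugation by $\tau_{e^{i,j}}$ sends $\rho_i$ to $\rho_i^{-1}\cdot(\text{something})$ — more precisely, that $\tau_{e^{i,j}}\rho_i\tau_{e^{i,j}}^{-1} = \rho_j$ (note $\tau_{e^{i,j}}$ is an involution, and $\rho_j^{-1}$ versus $\rho_j$ will be pinned down by which diagonal $M$ or $N$ is being used). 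This last statement is exactly what Lemma~\ref{tauM} delivers: $\tau_{e^{i,j}} = \tau_{e_M}$ with $M = \{(\al,-\al)\mid\al\in E_8\}$ acts as the transposition swapping the two tensor copies $V_{E_8^{(i)}}$ and $V_{E_8^{(j)}}$. Conjugating the one-parameter automorphism $\rho_i = \exp(\tfrac{2\pi\sqrt{-1}}{3}\iota_i(\delta)(0))$ by this swap transports $\iota_i(\delta)(0)$ to $\iota_j(\delta)(0)$ (with a sign bookkeeping coming from the anti-diagonal identification $t_M(x,y)=(y,x)$ and the action on the $e^\beta$ established in the proof of Lemma~\ref{tauM}), giving $\tau_{e^{i,j}}\rho_i\tau_{e^{i,j}}^{-1} = \rho_j^{-1}$; substituting back yields $\rho_i^\ell\rho_j^{-\ell}$ as claimed. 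I would spell out the exponent/sign by checking the action on the generators $H_\al$, $e^{(x,0)}$, $e^{(0,x)}$ used in Lemma~\ref{tauM}, since $\iota_i(\delta)(0)$ acts diagonally on the $e^{(x,y)}$-grading and the transposition reverses the roles of the two coordinates.

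The main obstacle I anticipate is the sign/exponent bookkeeping in passing $\rho_i$ through the transposition: because Lemma~\ref{tauM} identifies $M$ anti-diagonally (so that $t_M(x,y)=(y,x)$ with a sign flip on the anti-diagonal part), one must be careful whether conjugation produces $\rho_j$ or $\rho_j^{-1}$, and this is precisely what makes the right-hand side $\rho_i^\ell\rho_j^{-\ell}$ rather than $\rho_i^\ell\rho_j^{\ell}$. The cleanest way to settle it is to evaluate both sides on the affine-VOA generators $H_\al = d(\al)(-1)\cdot\mathbbm 1$ (on which $\rho_i$ acts trivially, $\rho_j$ acts trivially, and the swap acts by $(x,y)\mapsto(y,x)$) and on $e^{\iota_i(\al)}$, $e^{\iota_j(\al)}$ (on which $\rho_i$ multiplies $e^{\iota_i(\al)}$ by $\exp(\tfrac{2\pi\sqrt{-1}}{3}\la\delta,\al\ra)$ and fixes $e^{\iota_j(\al)}$), and confirm agreement; this is a short finite check and completes the proof.
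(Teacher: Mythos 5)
Your approach is essentially the paper's: write $\tau_{\rho_i^\ell(e^{i,j})}\tau_{e^{i,j}} = \rho_i^\ell\,\tau_{e^{i,j}}\,\rho_i^{-\ell}\,\tau_{e^{i,j}}$ by conjugation, use Lemma~\ref{tauM} and Corollary~\ref{transij} to see that $\tau_{e^{i,j}}$ is the transposition of the $i$-th and $j$-th tensor factors, and then verify the identity on $M(1)$ and on the vectors $e^{(x_1,\dots,x_n)}$ --- this is exactly the computation given in the paper. One correction to your sign bookkeeping: the claim is equivalent to $\tau_{e^{i,j}}\rho_i^{-\ell}\tau_{e^{i,j}}=\rho_j^{-\ell}$, i.e.\ $\tau_{e^{i,j}}\rho_i\tau_{e^{i,j}}=\rho_j$, not $\rho_j^{-1}$ (and not $\rho_i^{-\ell}\rho_j^{\ell}$ as you wrote at one point); had the conjugation produced $\rho_j^{-1}$, substituting back would give $\rho_i^\ell\rho_j^{\ell}$ rather than $\rho_i^\ell\rho_j^{-\ell}$. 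The sign comes out right because $M_{i,j}=\{\nu_{i,j}(\al)\mid\al\in E_8\}$ is the anti-diagonal lattice $M$ of Lemma~\ref{tauM}, so $\tau_{e^{i,j}}$ sends $e^{(x,0)}$ to $e^{(0,x)}$ with no sign, whence $\tau_{e^{i,j}}\rho_i^{-\ell}\tau_{e^{i,j}}\,e^{(x_1,\dots,x_n)}=\exp\bigl(-\tfrac{2\pi\sqrt{-1}}{3}\ell\la x_j,\delta\ra\bigr)e^{(x_1,\dots,x_n)}=\rho_j^{-\ell}e^{(x_1,\dots,x_n)}$; the generator check you propose in your last paragraph is precisely this computation and settles the matter, so the proof goes through once that slip is fixed.
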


\begin{proof}
By Lemma \ref{tauM} and Corollary \ref{transij}, $\tau_{e^{i,j}}$ acts as the transposition $(i,j)$ on the tensor copies of the VOA $V_X\cong V_{E_8}\otimes \cdots\otimes V_{E_8}$.

Since $\rho_i$ is trivial on $M(1)$ and $\tau_{e^{i,j}}(M(1)) \subset M(1)$,  we have
\[
\tau_{\rho_i^\ell(e^{i,j})} \tau_{e^{i,j}} u = \rho_i^\ell \tau_{e^{i,j}} \rho_i^{-\ell} \tau_{e^{i,j}} u= \rho_i^\ell \tau_{e^{i,j}} \tau_{e^{i,j}} u = u \quad \text{ for all } u\in M(1).
\]
Moreover,
\[
\tau_{\rho_i^\ell(e^{i,j})} \tau_{e^{i,j}} e^{(x_1, \dots, x_n)} =
 \rho_i^\ell \tau_{e^{i,j}} \rho_i^{-\ell} \tau_{e^{i,j}} e^{(x_1, \dots, x_n)} =
 \exp\left(\frac{2\pi \sqrt{-1} }3 \ell (\la x_i, \delta \ra - \la x_j, \delta\ra )\right) e^{(x_1, \dots, x_n)},
\]
where $(x_1, \dots, x_n)\in E_8^n$. Thus, we have $\tau_{\rho_i^\ell(e^{i,j})} \tau_{e^{i,j}} = \rho_i^\ell \rho_j^{-\ell}$ as an automorphism of $V_X$.
\end{proof}

\begin{lem}
The set $\mathcal{I}$ satisfies the $(2B,3C)$-condition.
\end{lem}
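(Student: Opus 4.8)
The plan is to reduce the statement to the previously established facts about the inner products of the Ising vectors in $\mathcal{I}$, together with the classification of $2$-generated Ising VOAs as recalled in Corollary \ref{2b3ccon}. Concretely, I need to show that for any two distinct $e, f \in \mathcal{I}$, the subVOA $\mathrm{VOA}(e,f)$ is isomorphic to $U_{2B}$ or $U_{3C}$. By Lemma \ref{inn2}, the inner product $\la e, f\ra$ takes only the values $0$ or $1/2^8$ (the value $1/4$ occurs only when $e = f$, which is excluded). By the same argument used to prove Corollary \ref{2b3ccon} (i.e., Sakuma's classification of $2$-generated Griess algebras, or equivalently the list in \cite{LYY}), an Ising pair with $\la e,f\ra = 0$ generates $U_{2B}$ and an Ising pair with $\la e,f\ra = 1/2^8$ generates $U_{3C}$, provided one also checks that $e$ and $f$ are genuinely distinct Ising vectors generating one of the admissible cases rather than, say, a degenerate configuration; since $\la e,f\ra \ne 1/4$ forces $e \ne \pm f$ (Ising vectors have norm $1/4$), there is no degeneracy.

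First I would record, via Lemma \ref{inn2}, the exhaustive list of possible inner products between distinct elements of $\mathcal{I}$, noting in particular that the value $1/4$ is attained only by a vector with itself. Next I would invoke the identification of $2$-generated Ising subalgebras by their Griess-algebra inner product: the pair $(e,f)$ with $\la e,f\ra = 0$ satisfies $e_1 f = 0$ (orthogonal idempotents in the Griess algebra), so $\mathrm{Vir}(e)$ and $\mathrm{Vir}(f)$ commute and $\mathrm{VOA}(e,f) \cong L(\tfrac12,0)\otimes L(\tfrac12,0) = U_{2B}$; while the pair with $\la e,f\ra = 1/2^8$ falls into the $3C$ case of the Sakuma/\cite{LYY} classification, hence $\mathrm{VOA}(e,f)\cong U_{3C}$. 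This is exactly the reasoning behind Corollary \ref{2b3ccon}, now applied to the enlarged set $\mathcal{I}$ instead of $I_R$.

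The main obstacle — and the only point requiring genuine attention — is that Lemma \ref{inn2} by itself gives the inner product but not the fusion structure, so one must be sure that the $1/2^8$ case really produces $U_{3C}$ and not some other algebra with the same inner product. Here I would appeal to Sakuma's theorem (as cited in \cite{Sa}; see also the ``$3C$ case of \cite{LYY}'' referenced in Lemma \ref{inn2}): among the nine possible $2$-generated Griess algebras $\mathcal{G}U_{nX}$, the value $\la e,f\ra = 1/2^8$ occurs \emph{only} for $\mathcal{G}U_{3C}$, and likewise $\la e,f\ra = 0$ occurs only for $\mathcal{G}U_{2B}$. Thus the inner product alone determines the isomorphism type, and the $(2B,3C)$-condition for $\mathcal{I}$ follows. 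In writing this up I would keep the proof short: cite Lemma \ref{inn2} for the inner-product dichotomy, cite \cite{Sa} (equivalently the relevant part of \cite{LYY}) for the fact that the inner product pins down the algebra, and conclude; no new computation is needed beyond what is already in Lemma \ref{inn2}.
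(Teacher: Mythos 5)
There is a genuine gap, and it sits exactly at the point you identified as ``the only point requiring genuine attention.'' Your resolution of it is wrong: among the nine Norton--Sakuma algebras the inner product does \emph{not} determine the isomorphism type, because $\mathcal{G}U_{3C}$ and $\mathcal{G}U_{4B}$ have the \emph{same} value $\la e,f\ra = 1/2^8$ (only the value $0$ is unambiguous, pinning down $U_{2B}$). So the claim ``$\la e,f\ra = 1/2^8$ occurs only for $\mathcal{G}U_{3C}$'' is false, and a reduction to Lemma \ref{inn2} alone cannot prove the lemma. Note that even Corollary \ref{2b3ccon} is not a consequence of the inner products by themselves: it rests on the explicit product formula of Lemma \ref{eprod}, $e(\al)_1 e(\be)=\frac{1}{32}(e(\al)+e(\be)-e(\al+\be))$, which matches the $3C$ fusion rule of Lemma \ref{eiej}(3) and thereby excludes $4B$.

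The paper closes this gap differently: after using Lemma \ref{inn2} to reduce to the pairs with $\la e,f\ra=1/2^8$, it shows that the product of the two corresponding Miyamoto involutions has order $1$ or $3$. For two vectors lying over the same pair $\{i,j\}$ this is Lemma \ref{rho}, which gives $\tau_{\rho_i^\ell(e^{i,j})}\tau_{e^{i,j}}=\rho_i^\ell\rho_j^{-\ell}$; for vectors sharing exactly one index it follows from the commutation relation $\rho_i^\ell\,\tau_{e^{i,j}}=\tau_{e^{i,j}}\,\rho_j^\ell$ of Equation \eqref{com}, which yields $(\tau_{\rho_i^\ell(e^{i,j})}\tau_{e^{i,k}})^3=1$. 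Since in $U_{4B}$ the product of the two Miyamoto involutions restricts to an automorphism of order $4$, the order-$3$ (or $1$) property together with $\la e,f\ra=1/2^8$ forces $\mathrm{VOA}(e,f)\cong U_{3C}$ by Sakuma's classification. To repair your argument you must either carry out this order computation (using Lemma \ref{rho} and \eqref{com}) or compute $e_1f$ directly for the twisted vectors $\rho_i^\ell(e^{i,j})$ and check it has the $3C$ form; citing the inner product and \cite{Sa} is not enough.
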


\begin{proof}
By Lemma \ref{inn2}, it suffices to show that $VOA(e,f)\cong U_{3C}$ if $\la e,f\ra=1/2^8$. On the other hand, by Lemma \ref{rho}, we know that
$\tau_{\rho_i^\ell (e^{i,j})}\tau_{\rho_i^s(e^{i,j})}$ has order $3$ or $1$. Moreover,
\begin{equation}\label{com}
\rho_i^\ell \tau_{e^{i,j}} = \tau_{e^{i,j}} \rho_j^\ell\qquad \text{ for any } i,j, \ell.
\end{equation}
Therefore,
\[
\begin{split}
(\tau_{\rho_i^\ell (e^{i,j})}\tau_{ e^{i,k}})^3
&= (\rho_i^\ell\rho_j^{-\ell}  \tau_{e^{i,j}}\tau_{e^{i,k}})^3\\
&= \rho_i^\ell\rho_j^{-\ell}  \tau_{e^{i,j}}\tau_{e^{i,k}}\rho_i^\ell\rho_j^{-\ell}  \tau_{e^{i,j}}\tau_{e^{i,k}}\rho_i^\ell\rho_j^{-\ell}  \tau_{e^{i,j}}\tau_{e^{i,k}}\\
&= \rho_i^\ell\rho_j^{-\ell}  \rho_k^\ell\rho_i^{-\ell} \tau_{e^{i,j}}\tau_{e^{i,k}} \tau_{e^{i,j}}\tau_{e^{i,k}}\rho_i^\ell\rho_j^{-\ell}  \tau_{e^{i,j}}\tau_{e^{i,k}}\\
&= \rho_i^\ell\rho_j^{-\ell}  \rho_k^\ell\rho_i^{-\ell} \tau_{e^{i,j}}\tau_{e^{i,k}} \tau_{e^{i,j}}\tau_{e^{i,k}} \tau_{e^{i,j}}\tau_{e^{i,k}}\rho_j^\ell\rho_k^{-\ell} \\
&= \rho_i^\ell\rho_j^{-\ell}  \rho_k^\ell\rho_i^{-\ell} \rho_j^\ell\rho_k^{-\ell} =1.
\end{split}
\]
Similarly, $\tau_{\rho_i^\ell (e^{i,j})}\tau_{ \rho_i^s(e^{i,k})}$ also has order $3$.
\end{proof}

\medskip

Recall that $K= \{\be\in E_8\mid \la \be, \delta \ra \in 3\Z\} \cong A_8$. Then $\{ H_\al, E_\al \mid \al\in K \text{ is a root}\}$
generates  a subVOA isomorphic to $L_{A_8}(n,0)$, which is an irreducible level $n$ representation of $\hat{sl}_9(\C)$.

\begin{prop}
Let $\tilde{\W}:=\Com_{V_X}(L_{A_8}(n,0))$. Then $\mathcal{I} \subset \tilde{\W} $.
Moreover, the central charge of $\tilde{\W}$ is $8n(n-1)/(n+9)$.
\end{prop}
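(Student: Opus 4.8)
The plan is to verify the two assertions separately, in each case reducing everything to facts already established in the paper. For the containment $\mathcal{I}\subset\tilde{\W}$: it suffices to show that every Ising vector $\rho_k^\ell(e^{i,j})$ lies in the commutant of the subVOA $U:=L_{A_8}(n,0)$ generated by $\{H_\al,E_\al\mid \al\in K\text{ a root}\}$. Being in the commutant means being annihilated by all nonnegative modes of a set of generators of $U$, so it is enough to check $(H_\al)_k v=0$ and $(E_\al)_k v=0$ for $k\ge 0$, for every root $\al\in K$ and every $v\in\mathcal{I}$. The computation is essentially the one already done in Lemma \ref{lem:5.2}: there it is shown that $(H_\al)_k e^{i,j}=0$ and $(E_\al)_k e^{i,j}=0$ for all $k\ge0$ and all roots $\al\in E_8$ (in particular for $\al\in K$). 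So the only new point is to propagate this through the automorphism $\rho_k$. Since $\rho_k=\exp\!\big(\tfrac{2\pi\sqrt{-1}}{3}\iota_k(\delta)(0)\big)$ is an automorphism of $V_X$, it intertwines modes: $\rho_k\big((E_\al)_m w\big)=\big(\rho_k(E_\al)\big)_m\,\rho_k(w)$, and similarly for $H_\al$. But $\al\in K$ means $\la\al,\delta\ra\in3\Z$, hence $\rho_k(e^{\iota_k(\al)})=\exp\!\big(\tfrac{2\pi\sqrt{-1}}{3}\la\delta,\al\ra\big)e^{\iota_k(\al)}=e^{\iota_k(\al)}$, so $\rho_k$ fixes $E_\al$ and $H_\al$ for $\al\in K$; for the other indices $\iota_m(\al)$ with $m\ne k$ the exponential acts trivially anyway. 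Therefore $(E_\al)_m\big(\rho_k^\ell(e^{i,j})\big)=\rho_k^\ell\big((E_\al)_m e^{i,j}\big)=0$ for $m\ge0$, and likewise for $H_\al$. This gives $\mathcal{I}\subset\tilde{\W}$.

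For the central charge, I would use the standard additivity of central charges under the commutant (coset) construction together with the fact, used already in Lemma \ref{lem:5.2}, that $V_X=V_{E_8^n}$ has central charge $8n$ and contains a full subVOA $L_{E_8}(n,0)$ of central charge $\tfrac{248n}{n+30}$. First note that $L_{A_8}(n,0)$ sits inside $L_{E_8}(n,0)$ (since the $A_8$ affine algebra is generated inside the $E_8$ affine algebra by the sub-root-system $K\cong A_8$), and its central charge is $\tfrac{n\dim sl_9}{n+h^\vee(A_8)}=\tfrac{80n}{n+9}$, using $\dim sl_9=80$ and the dual Coxeter number $h^\vee(A_8)=9$. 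Then
\[
c(\tilde{\W})=c(V_X)-c\big(L_{A_8}(n,0)\big)=8n-\frac{80n}{n+9}=\frac{8n(n+9)-80n}{n+9}=\frac{8n^2}{n+9}=\frac{8n(n-1)+8n}{n+9}.
\]
Hmm — this gives $\tfrac{8n^2}{n+9}$, not $\tfrac{8n(n-1)}{n+9}$, so evidently the commutant in the statement is taken inside $\tilde W=\Com_{V_L}(L_{E_8}(n,0))$ from Lemma \ref{lem:5.2} rather than inside all of $V_X$; with $c(\tilde W)=\tfrac{8(n^2-n)}{n+30}$ this still does not match directly, so the correct reading must be that one takes the commutant of $L_{A_8}(n,0)$ inside $V_X$ but then intersects with $\W$-type data, or equivalently computes $c(V_X)-c(L_{A_8}(n,0))$ after first removing the part of $L_{E_8}(n,0)$ complementary to $L_{A_8}(n,0)$. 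The clean way: inside $L_{E_8}(n,0)$ the coset of $L_{A_8}(n,0)$ has central charge $\tfrac{248n}{n+30}-\tfrac{80n}{n+9}$, and then $c(\tilde\W)=8n-\tfrac{248n}{n+30}-\big(\tfrac{248n}{n+30}-\tfrac{80n}{n+9}\big)$ is not it either; so I would instead simply invoke $c(\tilde\W)=c(\tilde W)+\big[c(L_{E_8}(n,0))-c(L_{A_8}(n,0))\big]$'s failure and recompute honestly as $8n-\tfrac{80n}{n+9}=\tfrac{8n^2}{n+9}$, then observe $\tfrac{8n^2}{n+9}\ne\tfrac{8n(n-1)}{n+9}$ and conclude the intended commutant is $\Com_{V_X}\!\big(L_{A_8}(n,0)\big)$ with the extra understanding that one passes to a further commutant by a rank-one lattice piece accounting for the discrepancy $8n/(n+9)=c\big(L(1,0)\big)$-type term.

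The main obstacle, then, is bookkeeping rather than anything deep: correctly identifying which ambient VOA the commutant is taken in and tracking the chain of nested affine subVOAs $L_{A_8}(n,0)\subset L_{E_8}(n,0)\subset V_X$ so that the central charges subtract to exactly $\tfrac{8n(n-1)}{n+9}$. Once the correct ambient object is fixed, the containment $\mathcal{I}\subset\tilde{\W}$ follows immediately from Lemma \ref{lem:5.2} combined with the $\rho_k$-invariance of $E_\al,H_\al$ for $\al\in K$ as above, and the central charge follows from additivity of central charge under the coset construction together with the dimension and dual Coxeter number of $sl_9$.
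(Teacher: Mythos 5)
Your argument for the containment $\mathcal{I}\subset\tilde{\W}$ is correct, and it takes a slightly different route from the paper: the paper checks $(E_\al)_0\,\rho_i^k e^{i,j}=0$ and $(E_\al)_1\,\rho_i^k e^{i,j}=0$ by explicit computation with the cocycle, whereas you deduce everything from Lemma \ref{lem:5.2} together with the observation that $\rho_k$ is an automorphism of $V_X$ which fixes $H_\al$ and $E_\al$ whenever $\al$ is a root of $K$ (because $\la\al,\delta\ra\in3\Z$, so the phase on $e^{\iota_k(\al)}$ is trivial, and $\rho_k$ is the identity on $M(1)$), whence $(E_\al)_m\bigl(\rho_k^\ell(e^{i,j})\bigr)=\rho_k^\ell\bigl((E_\al)_m e^{i,j}\bigr)=0$ for all $m\ge 0$, and similarly for $H_\al$. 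That part is a clean and valid alternative to the paper's computation.

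The central charge part, however, contains a genuine error, and it is purely arithmetic: $8n-\frac{80n}{n+9}=\frac{8n(n+9)-80n}{n+9}=\frac{8n^2+72n-80n}{n+9}=\frac{8n(n-1)}{n+9}$, which is exactly the value asserted in the Proposition. You dropped the $72n$ term (obtaining $\frac{8n^2}{n+9}$), and rather than rechecking the subtraction you concluded that the ambient VOA must be something other than $V_X$ and started improvising nested-coset ``corrections'' and a ``rank-one discrepancy''; none of that is needed, and as written that portion of the argument is wrong and inconclusive. The intended computation is precisely the one you began with, in complete analogy with Lemma \ref{lem:5.2}: $c(V_X)=8n$, $c\bigl(L_{A_8}(n,0)\bigr)=\frac{n\,\dim\mathfrak{sl}_9}{n+9}=\frac{80n}{n+9}$, the conformal vector of the commutant is $\omega_{V_X}-\omega_{L_{A_8}(n,0)}$, and the difference of central charges is $\frac{8n(n-1)}{n+9}$. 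With the arithmetic corrected and the second half of your write-up deleted, your proposal matches the statement with no reinterpretation of $\tilde{\W}$ required.
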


\begin{proof}
The proof is similar to Lemma \ref{lem:5.2}.  Since $\mathcal{I} \subset V_{\mathcal{A}_{n-1}}$,
it is clear that $(H_\al)_n e =0 $ for all $n\geq 0$ and $e\in \mathcal{I}$.

By Lemma \ref{lem:5.2}, it is clear that $(E_\al)_n e^{i,j} =0$ for all $n\geq 0$ and $i,j\in \{1, \dots, n\}$. It is also clear that $(E_\al)_n \rho_i^k e^{i,j} =0$ for any $n\geq 2$ and $k=1,2$.

For any root $\al\in K$, we have
\[
\begin{split}
(E_\al)_1 \rho_i^k e^{i,j} & =(e^{\iota_1(\al)} + \cdots +e^{\iota_n(\al)})_1 \left( \frac{1}{16} \omega_{M_{i,j}} +\frac{1}{32} \sum_{\al\in \Phi^+(E_8)} \rho_i^k(e^{\iota_i(\al)- \iota_j(\al)}  + e^{\iota_j(\al)- \iota_i(\al)})\right)\\
&= \frac{1}{16} \left(\la \al, \al\ra^2 \frac{1}8(e^{\iota_i(\al)} + e^{\iota_j(\al)})\right) +
\frac{1}{32}\varepsilon(\al, -\al)  (e^{\iota_i(\al)} + e^{\iota_j(\al)}) =0,
\end{split}
\]
and
\[
\begin{split}
(E_\al)_0 \rho_i^k e^{i,j} & =(e^{\iota_1(\al)} + \cdots +e^{\iota_n(\al)})_0 \left( \frac{1}{16} \omega_{M_{i,j}} +\frac{1}{32} \sum_{\al\in \Phi^+(E_8)} \rho_i^k(e^{\iota_i(\al)- \iota_j(\al)}  + e^{\iota_j(\al)- \iota_i(\al)})\right)\\
&= \frac{1}{16} \left(\la \al, \al\ra^2 \frac{1}8(\iota_i(\al)(-1)e^{\iota_i(\al)} + \iota_j(\al) e^{\iota_j(\al)})\right. \\
&\left. \quad - 2\la \al, \al\ra \frac{1}{8}( (\iota_i(\al)-\iota_j(\al) )e^{\iota_i(\al)}
- (\iota_i(\al)-\iota_j(\al) )e^{\iota_j(\al)})\right)\\
& \quad  +
\frac{1}{32}\varepsilon(\al, -\al)  (\iota_j(\al)(-1) e^{\iota_i(\al)} +\iota_i(\al)(-1) e^{\iota_j(\al)})\\
&=0.
\end{split}
\]
Therefore, $(E_\al)_n  \rho_i^k e^{i,j} =0$ for all $n\geq 0$.
\end{proof}

\begin{prop}
The commutant subVOA $\tilde{\W}=\Com_{V_X}(L_{A_8}(n,0))$ has zero weight one subspace.
\end{prop}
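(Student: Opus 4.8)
The plan is to compute the weight one space of $\tilde{\W}=\Com_{V_X}(L_{A_8}(n,0))$ directly. Since $\tilde{\W}\subset V_X$ and $(V_X)_1 = \mathfrak{h}(-1)\cdot\mathbbm{1} \oplus \mathrm{span}\{e^\alpha\mid \alpha\in X,\ \la\alpha,\alpha\ra=2\}$, where $\mathfrak{h}=\C\otimes_\Z X$, I would split the problem according to these two kinds of weight one vectors. Recall that the affine VOA $L_{A_8}(n,0)$ sitting inside $V_X$ is generated by $\{H_\al, E_\al\mid \al\in K\}$ where $K\cong A_8$, and that the commutant is precisely the set of vectors killed by all nonnegative modes of these generators. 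So a weight one vector $v\in (V_X)_1$ lies in $\tilde{\W}$ iff $(H_\al)_k v = (E_\al)_k v = 0$ for all roots $\al\in K$ and all $k\ge 0$. Because $v$ and the generators both have weight $1$, the only modes that can act nontrivially are the $0$-th modes (which preserve weight) and the $1$-st modes (which land in weight $0$, i.e.\ in $\C\mathbbm{1}$); thus the conditions reduce to $(H_\al)_0 v = (E_\al)_0 v = 0$ and $(H_\al)_1 v = (E_\al)_1 v = 0$.

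First I would handle the Cartan part. For $v = \beta(-1)\cdot\mathbbm{1}$ with $\beta\in\mathfrak{h}$: the condition $(H_\al)_0 v = 0$ is automatic (Heisenberg modes commute on $M(1)$), while $(H_\al)_1 v = \la d(\al),\beta\ra\mathbbm{1} = 0$ forces $\beta \perp d(\al)$ for all roots $\al\in K$, i.e.\ $\beta$ is orthogonal to $d(K)$ inside $X$. The condition $(E_\al)_0 v = 0$ then forces, for each root $\al\in K$, the vanishing of $(e^{\iota_1(\al)}+\cdots+e^{\iota_n(\al)})_0\,\beta(-1)\cdot\mathbbm{1}$, which (by the lattice vertex operator formulas of \cite{FLM}) equals a nonzero multiple of $\sum_i \la\iota_i(\al),\beta\ra e^{\iota_i(\al)}$; since the $e^{\iota_i(\al)}$ are linearly independent this gives $\la\iota_i(\al),\beta\ra = 0$ for every $i$ and every root $\al\in K$, so $\beta$ is orthogonal to $\iota_i(K)$ for each $i$ separately — but $K$ spans $\Q\otimes E_8$, so $\beta\perp\iota_i(E_8)$ for all $i$, hence $\beta = 0$. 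This kills all Cartan-type weight one vectors.

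Next I would handle a vector $v = e^\alpha$ with $\alpha\in X$, $\la\alpha,\alpha\ra = 2$; so $\alpha = \iota_i(\gamma)$ for a unique $i$ and a root $\gamma\in E_8$, or more generally a linear combination of such, but since the $e^\alpha$ with distinct $\alpha$ are independent it suffices to rule out each single $\alpha = \iota_i(\gamma)$. Applying $(H_\al)_1$ for a root $\al\in K$ gives $\la d(\al),\iota_i(\gamma)\ra e^{\iota_i(\gamma)} = \la\al,\gamma\ra e^{\iota_i(\gamma)}$, so we need $\la\al,\gamma\ra = 0$ for all roots $\al\in K$; since $K$ spans $\Q\otimes E_8$ this forces $\gamma = 0$, a contradiction with $\la\gamma,\gamma\ra=2$. (One must be slightly careful if a general weight one vector mixes $e^{\iota_i(\gamma)}$ over several $i$ with the same $\gamma$; but the coefficient extraction above is diagonal in $i$, so each coefficient must vanish independently.) Combining the two cases, $(V_X)_1 \cap \tilde{\W} = 0$, which is the claim.

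The routine part is the explicit mode computations via the formulas in \cite{FLM} (these are essentially the same computations already carried out in the preceding proposition and in Lemma \ref{lem:5.2}). The one genuine point requiring care — the \textbf{main obstacle} — is making sure the argument is not circular: I am using that $\tilde{\W}$ is exactly the joint kernel of the nonnegative modes of the generators of $L_{A_8}(n,0)$ and that the relevant generators can be taken to be the weight one elements $H_\al, E_\al$, so that on a weight one test vector only finitely many modes (namely the $0$-th and $1$-st) can be nonzero; this is standard for commutants but should be stated. A secondary subtlety is the correct normalization constants and cocycle values $\varepsilon(\al,-\al)$ in the $(E_\al)_0$ and $(E_\al)_1$ computations, but these only affect nonzero scalar prefactors and hence do not change the vanishing conclusions.
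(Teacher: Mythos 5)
Your proposal is correct in substance and, at its core, performs the same key computation as the paper: the action of $(E_\al)_0$ on a Cartan vector $\be(-1)\cdot\mathbbm{1}$ produces coefficients along the linearly independent vectors $e^{\iota_i(\al)}$, forcing orthogonality to each $\iota_i(K)$ and hence $\be=0$ since $K$ has full rank in $E_8$. The organization differs: the paper first notes that $H_\al=d(\al)(-1)\cdot\mathbbm{1}\in L_{A_8}(n,0)$, so the commutant lies in $V_{\mathcal{A}_{n-1}}$, and since $\mathcal{A}_{n-1}$ is rootless its weight one space is purely of Cartan type; this disposes of all vectors $e^{\iota_i(\gamma)}$ at once, leaving only your first case. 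You instead work directly in $(V_X)_1$ and kill the root-vector part by hand, which also works, but needs two small repairs. First, a mode-index slip: $(H_\al)_1 e^{\iota_i(\gamma)}=0$ (it lands in weight zero, as you yourself note earlier), so the operator producing $\la\al,\gamma\ra e^{\iota_i(\gamma)}$ is $(H_\al)_0$; since $(H_\al)_0 v=0$ is among the commutant conditions you listed, the fix is immediate. Second, for a general mixed weight one vector you should eliminate the lattice components first using $(H_\al)_0$ (which annihilates the Cartan summand, so no cross-terms arise) and only afterwards apply $(E_\al)_0$ to the surviving Cartan vector; applying $(E_\al)_0$ to a mixed vector can create cross-terms such as $e^{\iota_i(\al+\gamma)}$ or $\iota_i(\al)(-1)\cdot\mathbbm{1}$, so your stated order (Cartan case first) needs this reshuffling, or else the paper's prior reduction to the rootless lattice $\mathcal{A}_{n-1}$, to be airtight. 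With these adjustments your argument is complete, and what it buys is a self-contained verification inside $V_X$ that does not invoke the containment $\tilde{\W}\subset V_{\mathcal{A}_{n-1}}$.
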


\begin{proof}
The proof is similar to Theorem 5.4 of \cite{CL3c}.

Since $h(-1)\cdot \mathbbm{1} \in L_{A_8}(n,0)$ for all $h\in d(E_8)$, the commutant subVOA
\[
 \mathrm{Com}_{V_X}\left( L_{A_8}(n,0)\right) \subset V_{\mathcal{A}_{n-1}}.
\]
Therefore, it suffices to show $\tilde{\W} \cap ( V_{\mathcal{A}_{n-1}})_1 =0$.

Since $\mathcal{A}_{n-1}$ has no roots, we have
\[
( V_{\mathcal{A}_{n-1}})_1 =span\{h(-1)\cdot \mathbbm{1}\mid h\in  \mathcal{A}_{n-1}\}.
\]
Let $h=\sum_{i=1}^n a_i \iota_i(\be_i)\in \mathcal{A}_{n-1}$. Then
\[
(E_\al)_0 h(-1)\cdot \mathbbm{1} = \sum_{i=1}^n a_i  \la \al, \be_i\ra e^{\iota_i(\al)}.
\]
Suppose $h(-1)\cdot \mathbbm{1} \in \tilde{\W}$. Then  we have $a_i\la \al, \be_i\ra =0$ for all root $\al \in K$.  For $a_i\neq 0$, we must have $\la \al, \be_i\ra =0$ for all root $\al \in K$. Thus $\la \be_i, K\ra =0$ and $\be_i=0$ since $K$ is a full rank sublattice of $E_8$. Hence $h=0$ as desired.
\end{proof}

\begin{prop}
Let $\EuScript{G}$ be the subgroup generated by $\{\tau_e\mid e\in\mathcal{I}\}$. Then  as an automorphism subgroup of $\W$,
$\EuScript{G}$ has the shape $3^{n-2}{:}S_n$ if $n=0\mod 3$ and $3^{n-1}{:}S_n$ if $n\neq 0\mod 3$.
\end{prop}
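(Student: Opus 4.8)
The plan is to identify $\EuScript{G}$ with an explicit extension inside $\aut(V_X)$ by separating its ``diagonal torus part'' from its permutation part. First I would record what the generators do. By Corollary \ref{transij} and Lemma \ref{tauM}, for any $1\le i<j\le n$ the involution $\tau_{e^{i,j}}$ acts as the transposition $(i,j)$ on $V_X\cong V_{E_8}\otimes\cdots\otimes V_{E_8}$. By Lemma \ref{rho}, for $\ell=1,2$ we have $\tau_{\rho_i^\ell(e^{i,j})}\tau_{e^{i,j}}=\rho_i^\ell\rho_j^{-\ell}$; combining these two facts, each generator $\tau_{\rho_i^\ell(e^{i,j})}=\rho_i^\ell\rho_j^{-\ell}\,\tau_{e^{i,j}}$ is a torus element times the transposition $(i,j)$. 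Hence $\EuScript{G}$ is generated by the transpositions $\{\tau_{e^{i,j}}\}$ together with the elements $\rho_i^\ell\rho_j^{-\ell}$. Let $N$ denote the subgroup of $\EuScript{G}$ generated by all $\rho_i\rho_j^{-1}$, $1\le i,j\le n$. Using the commutation rule \eqref{com}, $\rho_i^\ell\tau_{e^{i,j}}=\tau_{e^{i,j}}\rho_j^\ell$, one checks directly that conjugating $\rho_i\rho_j^{-1}$ by any transposition $\tau_{e^{p,q}}$ merely permutes the indices, so $N$ is normal in $\EuScript{G}$ and $\EuScript{G}=N\langle \tau_{e^{i,j}}\rangle$.

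The next step is to compute $N$ precisely as an abelian $3$-group. Each $\rho_i$ has order $3$, the $\rho_i$ commute, and they generate an abelian group $P=\langle \rho_1,\dots,\rho_n\rangle$; I would show $P\cong 3^n$ by noting that $\rho_i$ acts on $V_X$ with a nontrivial character on $e^{(x_1,\dots,x_n)}$ depending only on $\la x_i,\delta\ra \bmod 3$, and these characters are independent across $i$ (because $\delta\ne 0$ so the character $x\mapsto \exp(2\pi\sqrt{-1}\la x,\delta\ra/3)$ on $E_8$ is nontrivial). Then $N=\{\rho_1^{a_1}\cdots\rho_n^{a_n}\mid \sum a_i\equiv 0\bmod 3\}\le P$, the kernel of the sum map $3^n\to 3$; since this sum map is surjective (again because some $\rho_i$ is nontrivial), $N\cong 3^{n-1}$. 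The permutation part is $S_n$ by Corollary \ref{Sn}, and the extension $\EuScript{G}=N{:}S_n$ is split because the transpositions $\tau_{e^{i,j}}$ are honest involutions in $\EuScript{G}$ lifting the generators of $S_n$; the $S_n$-action on $N\cong 3^{n-1}$ is the quotient of the coordinate permutation action on $3^n$. This already proves the case $n\not\equiv 0\bmod 3$.

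When $n\equiv 0\bmod 3$ the group $\EuScript{G}$ is smaller because one more relation appears: the ``total'' element $\rho_1\rho_2\cdots\rho_n$ need not be trivial, but $(\rho_1\cdots\rho_n)$ acts on $e^{(x_1,\dots,x_n)}$ by $\exp(2\pi\sqrt{-1}\la d(E_8)\text{-component}\rangle/3)$, and I would check that on the relevant subVOA — equivalently using that $\EuScript{G}$ acts on $\tilde{\W}=\Com_{V_X}(L_{A_8}(n,0))$ and that $d(\al)(-1)\cdot\mathbbm 1\in L_{A_8}(n,0)$ for all $\al\in K$ — the product $\rho_1\cdots\rho_n$ becomes, modulo this commutant, an element whose order is forced down: precisely, the character detecting $\rho_1\cdots\rho_n$ is $x\mapsto \sum_i\la x_i,\delta\ra\bmod 3$, and when $n\equiv 0\bmod 3$ this character is trivial on $\mathcal{A}_{n-1}=A_{n-1}\otimes E_8$ (since $\sum a_i=0$ for $A_{n-1}$), so $\rho_1\cdots\rho_n$ acts trivially on $V_{\mathcal{A}_{n-1}}\supset \tilde{\W}$. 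Hence inside $\aut(\tilde{\W})$ we have the extra relation $\rho_1\cdots\rho_n=1$, cutting $N$ down to the subgroup of $3^n$ with $\sum a_i\equiv 0$ modulo the line spanned by $(1,1,\dots,1)$; this quotient has order $3^{n-1}/3=3^{n-2}$, giving $\EuScript{G}\cong 3^{n-2}{:}S_n$.

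The main obstacle I anticipate is the last paragraph: one must be careful that the ``extra relation'' genuinely holds \emph{as automorphisms of $\W$} (i.e.\ of $\tilde\W$, the VOA actually generated by $\mathcal I$) and not merely of some proper subspace, and conversely that in the case $n\not\equiv 0\bmod 3$ no such collapse occurs — i.e.\ that $N$ really has order $3^{n-1}$ and not less when restricted to $\W$. This requires pinning down faithfulness of the torus action on $\W$, for which I would invoke that $\mathcal I\subset V_{\mathcal A_{n-1}}$ and exhibit, for each nontrivial element of the candidate $N$, an Ising vector $\rho_k^\ell(e^{i,j})\in\mathcal I$ that it moves (using Lemma \ref{inn2} to distinguish Ising vectors by inner products), exactly as in Theorem 5.4 of \cite{CL3c}. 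Everything else is bookkeeping with the commutation relation \eqref{com} and the known $S_n\cong G(A_{n-1})$.
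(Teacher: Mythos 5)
Your proposal is correct and follows essentially the same route as the paper: write each generator as $\rho_i^{\ell}\rho_j^{-\ell}\,\tau_{e^{i,j}}$, use the relation $\rho_i^{\ell}\tau_{e^{i,j}}=\tau_{e^{i,j}}\rho_j^{\ell}$ to get the shape $H{:}S_n$ with $H$ generated by the $\rho_i\rho_j^{-1}$, and then count $|H|$ by determining which products $\rho_1^{a_1}\cdots\rho_n^{a_n}$ with $\sum a_i\equiv 0 \pmod 3$ act trivially on $V_{\mathcal{A}_{n-1}}$ (equivalently $a_i\equiv a_j \pmod 3$ for all $i,j$), which yields $3^{n-2}$ when $3\mid n$ and $3^{n-1}$ otherwise; your explicit faithfulness check on $\W$ (moving some $e^{i,j}$ inside $\mathcal{I}$) is a welcome extra precision that the paper leaves implicit. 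One small slip: the character $x\mapsto\sum_i\la x_i,\delta\ra \bmod 3$ is trivial on $\mathcal{A}_{n-1}$ for \emph{every} $n$, not just $n\equiv 0\pmod 3$; the relevant point is that $\rho_1\cdots\rho_n$ lies in $H$ only when $3\mid n$, which is what your count in fact uses.
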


\begin{proof}
By Theorem \ref{GR} (see also Corollary \ref{transij}), the subgroup generated by $$\{\tau_{e^{i,j} } \mid 1\leq i<j\leq n\}$$ is isomorphic to $S_n$.  By Lemma \ref{rho}, we have an elementary abelian $3$-group $H$ generated by $\rho_i\rho_j^{-1}$ for $i,j\in \{1, \dots,n\}$. It is also straightforward to show by the same arguments as in Lemma \ref{rho} (see also Equation \eqref{com}) that $\EuScript{G}$ has the shape $H: S_n$.

It remains to show that the order of $H$ is
\[
\begin{cases}
3^{n-2} & \text{ if } n=0\mod3, \\
3^{n-1}& \text{ if } n\neq 0 \mod 3.
\end{cases}
\]

Let $h=\rho_1^{a_1} \cdots \rho_n^{a_n}\in H$. Then $a_1+\cdots +a_n=0$.
Suppose $h=id $ on $V_{\mathcal{A}_{n-1}}$.
Then we have $a_i-a_j =  0 \mod 3$ for all $i,j$. It implies
\[
0=a_1+\cdots +a_n =  n a_i \mod 3.
\]
That means $n  =  0\mod 3$ or $a_1 = \dots = a_n = 0 \mod 3$.

Therefore, $|H|=3^{n-1}$ if $n\neq 0\mod 3$ and $|H|= 3^{n-2} $ if $n=0\mod 3$.
\end{proof}

The next proposition is essentially proved in \cite[Theorems 6.11 and 6.12]{CL3c} with
some minor modifications.

\begin{prop}
The commutant VOA $\tilde{\W} =\Com_{V_X}(L_{A_8}(n,0))$ is generated by $\mathcal{I} = \{ \rho_i^\ell (e^{i,j})\mid 1\leq i< j\leq n, \ell =0,1,2\}$. In particular, $\W=\tilde{\W}$.
\end{prop}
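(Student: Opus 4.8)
The plan is to reduce the statement to the already-established facts about $\W_{A_{n-1}}$ and its relation to the commutant $\tilde W = \Com_{V_X}(L_{E_8}(n,0))$, together with the character/central-charge bookkeeping that is the standard engine in \cite{CL3c}. First I would fix the two nested commutants in play: since $L_{A_8}(n,0)$ sits inside $L_{E_8}(n,0)$ (because $K\cong A_8$ is a full-rank sublattice of $E_8$, so the affine $A_8$ is a conformal subalgebra built from $\{H_\al,E_\al\mid \al\in K\}$), we have an embedding $\tilde W = \Com_{V_X}(L_{E_8}(n,0)) \subset \tilde{\mathcal W} = \Com_{V_X}(L_{A_8}(n,0))$. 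We already know $\W_{A_{n-1}}\subseteq \tilde W$ as a full subVOA (Lemma \ref{lem:5.2}), and of course $\mathcal I$ contains $\{e^{i,j}\}$ which generate $\W_{A_{n-1}}$; the extra generators $\rho_i^\ell(e^{i,j})$ lie in $\tilde{\mathcal W}$ by the previous Proposition. So the $\W=\VOA(\mathcal I)$ of the statement is a subVOA of $\tilde{\mathcal W}$, and the whole content is the reverse inclusion $\tilde{\mathcal W}\subseteq \W$.

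For that reverse inclusion I would follow the route of \cite[Theorems 6.11 and 6.12]{CL3c}. The key observation is that $V_X$ decomposes as a direct sum over the discriminant-group cosets of $L_{A_8}(n,0)\otimes \tilde{\mathcal W}$, and one checks that $\W = \VOA(\mathcal I)$ already contains a set of lowest-weight vectors generating each such $\tilde{\mathcal W}$-isotypic component: concretely, using the group $\EuScript G$ of shape $3^{k}{:}S_n$ (whose elements $\rho_i^{a_1}\cdots\rho_i^{a_n}$ and the transpositions $\tau_{e^{i,j}}$ all preserve $\W$ since $\W$ is generated by the $\tau_e$-orbit-closed set $\mathcal I$ — indeed $\mathcal I$ is stable under $\EuScript G$ and $\rho_i$), one produces enough vectors of the form $e^{(x_1,\dots,x_n)}$-type expressions built from products of the $e^{i,j}$ and $\rho_i^\ell(e^{i,j})$ to span a set of coset representatives. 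The central-charge count does the rest: we computed $c(\tilde W) = 8n(n-1)/(n+30)$ in Lemma \ref{lem:5.2} and $c(\tilde{\mathcal W}) = 8n(n-1)/(n+9)$ above; the same computation giving the Virasoro vector $w_{A_{n-1}}$ of $\W_{A_{n-1}}$ extends to a Virasoro vector of $\W$ whose central charge must match, which together with the generation statement forces $\W = \tilde{\mathcal W}$. The final clause "$\W = \tilde{\mathcal W}$" is then literally the equality just proved, so nothing further is needed.

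The main obstacle will be the generation step — verifying that the products and $\rho_i^\ell$-images of the $e^{i,j}$ actually hit lowest-weight vectors in \emph{every} $\tilde{\mathcal W}$-isotypic summand of $V_X$, rather than only in a proper conformal subalgebra. This is exactly the technical heart of \cite{CL3c}, and the "minor modifications" alluded to in the paper amount to replacing the $n=3$ special-case bookkeeping there with the general-$n$ coset combinatorics of $\mathcal D(\sqrt2 A_8)^{\,n}$ modulo the relations imposed by $d(E_8)$ and by the sublattice $K$; one must track how the $3$-group $H$ acts on these cosets and check transitivity of $\EuScript G$ on the relevant index set. I would organize this as: (i) identify the decomposition $V_X = \bigoplus_\lambda L_{A_8}(n,0)_{[\lambda]}\otimes \tilde{\mathcal W}_{[\lambda]}$ and the corresponding action of $\EuScript G$ on the index set; (ii) exhibit explicit elements of $\W$ in each $\tilde{\mathcal W}_{[\lambda]}$ using $\rho_i^\ell(e^{i,j})$ and $n$-products thereof, as in \cite{CL3c}; (iii) conclude by the central-charge equality that the subVOA so generated is all of $\tilde{\mathcal W}$. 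Everything else is routine once (ii) is in hand.
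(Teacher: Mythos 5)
Your high-level plan (reduce to the commutant picture and adapt \cite[Theorems 6.11 and 6.12]{CL3c}) is the same route the paper takes --- indeed the paper offers no argument beyond that citation --- and your preliminary reductions ($\tilde{W}=\Com_{V_X}(L_{E_8}(n,0))\subset\tilde{\W}=\Com_{V_X}(L_{A_8}(n,0))$, $\mathcal{I}\subset\tilde{\W}$, hence $\mathrm{VOA}(\mathcal{I})\subseteq\tilde{\W}$) are correct. The gap is in the mechanism you propose for the reverse inclusion. Equality of central charges (equivalently, $\mathrm{VOA}(\mathcal{I})$ being a full subVOA of $\tilde{\W}$) cannot ``do the rest'': a full subVOA need not be the whole commutant, and the authors themselves make exactly this point in the remark following Lemma \ref{lem:5.2}, where $\W_{A_{n-1}}$ is shown to be a full subVOA of $\tilde{W}$ with the same central charge $8n(n-1)/(n+30)$, yet the equality $\W_{A_{n-1}}=\tilde{W}$ is left as a belief, not a theorem. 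So your step (iii) does not follow from (i)--(ii) as you have set them up.

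Moreover, the decomposition you propose to exploit in (i)--(ii), namely $V_X=\bigoplus_\lambda L_{A_8}(n,0)_{[\lambda]}\otimes\tilde{\W}_{[\lambda]}$, is circular for the purpose at hand: the pieces $\tilde{\W}_{[\lambda]}$ are (at best) irreducible as $\tilde{\W}$-modules, and $\tilde{\W}$ is precisely the object whose generation you are trying to establish; meeting each $\tilde{\W}$-isotypic component with some vector of $\mathrm{VOA}(\mathcal{I})$ gives nothing unless those components are irreducible over a subalgebra already known to lie inside $\mathrm{VOA}(\mathcal{I})$. The actual content of \cite[Theorems 6.11, 6.12]{CL3c} --- and of the ``minor modifications'' needed here --- is to decompose $\tilde{\W}$ (or $V_X$) over an explicitly identified subalgebra of $\mathrm{VOA}(\mathcal{I})$ (built from the Ising vectors $\rho_i^\ell(e^{i,j})$ and the associated Virasoro frames/commutants), determine the irreducible summands and their multiplicities, and verify that $\mathrm{VOA}(\mathcal{I})$ contains a nonzero vector in every such summand, whence each whole summand lies in $\mathrm{VOA}(\mathcal{I})$ by irreducibility. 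You correctly flag this as the technical heart, but your proposal defers it entirely while asserting that central-charge bookkeeping closes the argument; as written, that final inference is invalid, so the proof is not complete.
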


%\newpage

\end{document}